\numberwithin{equation}{section}
\newtheorem{main}{Theorem}
\newtheorem{mcor}[main]{Corollary}
\newtheorem{theorem}{Theorem}[section]
\newtheorem{lem}[theorem]{Lemma}
\newtheorem{prop}[theorem]{Proposition}
\newtheorem{cor}[theorem]{Corollary}
\theoremstyle{definition}
\newtheorem{notation}[theorem]{Notation}
\newtheorem{Remarks}[theorem]{Remarks}
\newtheorem{claim}[theorem]{Claim}
\newtheorem*{claim*}{Claim}
{ \theoremstyle{remark} }
\DeclareMathAlphabet{\pazocal}{OMS}{zplm}{m}{n}
\def\ca{\curvearrowright}
\def\ra{\rightarrow}
\def\la{\lambda}
\def\La{\Lambda}
\def\Om{\Omega}
\def\Sg{\Sigma}
\def\g{\gamma}
\def\G{\Gamma}
\newcommand{\Z}{\mathbb{Z}}
\newcommand{\C}{\mathbb{C}}
\theoremstyle{remark}
\newcommand{\set}[1]{ \{ #1 \} }
\newcommand{\norm}[1]{\| #1 \|}
\newcommand{\generator}[1]{\langle #1 \rangle}
\begin{document}

\title[Wreath product von Neumann algebras]{Some rigidity results for II$_1$ factors arising from wreath products of property (T) groups}
\author[I. Chifan]{Ionut Chifan}
\address{Department of Mathematics, The University of Iowa, 14 MacLean Hall, IA  
52242, USA}
\email{ionut-chifan@uiowa.edu}
\thanks{I.C.\ was supported by NSF Grant DMS \#1600688}
\author[B. Udrea]{Bogdan Teodor Udrea}
\address{Department of Mathematics, The University of Iowa, 14 MacLean Hall, IA  
52242, USA}
\email{bogdan-udrea@uiowa.edu}

\maketitle

\begin{abstract}We show that any infinite collection $(\G_n)_{n\in \mathbb N}$ of icc, hyperbolic, property (T) groups satisfies the following von Neumann algebraic \emph{infinite product rigidity} phenomenon. If $\La$ is an arbitrary group such that $L(\oplus_{n\in \mathbb N} \G_n)\cong L(\La)$ then there exists an infinite direct sum decomposition $\La=(\oplus_{n \in \mathbb N} \La_n )\oplus A$ with $A$ icc amenable such that, for all $n\in \mathbb N$, up to amplifications, we have $L(\G_n) \cong L(\La_n)$  and $L(\oplus_{k\geq n} \G_k )\cong L((\oplus_{k\geq n} \La_k) \oplus A)$. The result is sharp and complements the previous finite product rigidity property found in \cite{CdSS16}. Using this we provide an uncountable family of restricted wreath products $\G\cong\Sigma\wr \Delta$ of icc, property (T) groups $\Sigma$, $\Delta$ whose wreath product structure is recognizable, up to a normal amenable subgroup, from their von Neumann algebras $L(\G)$.  Along the way we highlight several applications of these results to the study of rigidity in the $\mathbb C^*$-algebra setting.
\end{abstract}






\maketitle


\section{Introduction}

For a countable infinite group $\Gamma$ we denote by $\ell^2\Gamma$ the Hilbert space of all square summable complex functions on $\Gamma$. Each element $\g\in \Gamma$ gives rise to a unitary operator $u_\g :\ell^2\G\rightarrow \ell^2\G$ by group translation $u_\g (\xi)(\la)= \xi(\g^{-1}\la)$, where $\la\in \G$ and  $\xi\in \ell^2\G$. The bicommutant $\{ u_\g | \g\in \G\}''$ inside the algebra of all bounded linear operators $B(\ell^2 \G)$, is denoted by $L(\G)$ and it is called the \emph{group von Neumann algebra of $\G$}. The algebra $L(\G)$ is a II$_1$ factor (has trivial center) precisely when all nontrivial conjugacy classes of $\G$ are infinite (icc), this being the most interesting for study \cite{MvN43}. 
\vskip 0.02in
Ever since their introduction, the classification of these factors is a core direction of research driven by the following fundamental question: \emph{What aspects of the group $\Gamma$ are remembered by $L(\Gamma)$?} This emerged as an interesting yet intriguing theme since these algebras tend to have little memory of the initial group. This is best illustrated by Connes' celebrated result asserting that all amenable icc groups give isomorphic factors, \cite{Co76}. Hence very different groups like the group of all finite permutations of the positive integers, the lamplighter group, or the wreath product of the integers with itself give rise to isomorphic factors. Consequently, the von Neumann algebric structure has no memory of the typical discrete algebraic group invariants like torsion, rank, or generators and relations. In this case the only information the von Neumann algebra retains is the amenability---an approximation property---of the group.
\vskip 0.02in
In the non-amenable case the situation is radically different and an unprecedented progress has been achieved through the emergence of Popa's deformation/rigidity theory \cite{Po06}. Using this completely new conceptual framework it was shown that various properties of groups, such as their representation theory or their approximations, can be completely recovered from their von Neumann algebras. As a result, for large classes of group factors, many remarkable structural properties such as primeness, (strong) solidity, classification of normalizers of algebras, etc could be successfully established \cite{Po03,IPP05,Po06,Po08,OP07,OP08,CH08,CI08,Pe09,PV09,FV10,Io10,IPV10,HPV10,CP10,Si10,Va10,CS11,CSU11,Io11,PV11,HV12,PV12,Io12,Bo12,BHR12,Is12,BV13,Va13,Is14, CIK13,VV14,BC14,CKP14,CdSS16,DHI16,CdSS17}. For additional information we refer the reader to the following survey papers \cite{Po06,V10,Io12,Io17icm}.

One of the most impressive milestone in this study is Ioana-Popa-Vaes' discovery of the first examples of groups that can be completely recovered from their von Neumann algebras (\emph{$W^*$-superrigid}\footnote{$\G$ is $W^*$-superrigid if whenever $\La$ is an arbitrary group so that  $L(\G) =L(\La) $ then it follows  that  $\La=\G$.} groups), \cite{IPV10}. See also the subsequent result \cite{BV13} and the more recent work \cite{CI17}. These results pushed the classification problem of group factors to new boundaries and exciting possibilities. In this direction an interesting and wide open theme is to identify a comprehensive list of canonical constructions in group theory (direct sum, free product, HNN-extension, wreath product, etc) that are recoverable from their von Neumann algebras.

\subsection{Statements of main results} Over a decade ago Ozawa and Popa discovered the first unique prime factorization results for tensor product of II$_1$ factors, \cite{OP03}. This work has had deep consequences to the classification of II$_1$ factors and has generated significant subsequent developments.  
Some of  Ozawa-Popa's results have been strengthened considerably in \cite{CdSS16} where was unveiled a large class of product groups $\G_1\times \G_2$ whose product structure is a feature completely recognizable at the level of their von Neumann algebras $L(\G_1\times \G_2)$. Precisely, whenever $\G_1,\G_2$ are hyperbolic icc groups (e.g.\ non-abelian free groups) and $\La$ is an \emph{arbitrary} group such that $L(\G_1\times \G_2)=L(\La)$ then $\La$ admits a nontrivial product decomposition  $\La=\La_1\times \La_2$ and there exists a scalar $t>0$ such that, up to unitary conjugacy, we have $L(\G_1)=L(\La_1)^t$ and $L(\G_2)=L(\La_2)^{1/t}$. The result still holds if one assumes, more generally, that $\G_1,\G_2$ are just icc biexact groups, \cite{Oz03}. 

Isono studied unique prime factorization aspects for infinite tensor product of factors and several interesting results have emerged in \cite{Is16}. Motivated in part by these results it is natural to investigate whether ``product rigidity'' properties, similar with ones in \cite{CdSS16},  would hold in the context of infinite direct sums groups. Specifically, if one considers $\G=\oplus_{n\in \mathbb N} \G_n$ with $\G_n$'s icc non-amenable groups, it would be interesting to understand how much of the infinite direct sum structure of $\G$ is retained by its von Neumann algebra $L(\G)$.  Right away one may notice a sharp contrast point with the aforementioned finite product situation. Since $L(\G)$ canonically decomposes as an infinite tensor product $L(\G)= \bar \otimes_{n\in \mathbb N} L(\G_n)$ it follows that $L(\G)$ is a McDuff factor and hence  $L(\G)=L(\G)\bar\otimes \mathcal R$, where $\mathcal R$ is the hyperfinite factor; consequently, we have that $L(\oplus_{n\in \mathbb N} \G_n)= L((\oplus_{n\in\mathbb N}\G_n) \oplus A)$, for any icc amenable group $A$. This observation shows that, in the best case scenario, $L(\G)$ could remember the direct sum feature of the underlying group \emph{only up to an amenable subgroup} which typically lies in the tail of the infinite tensor product. It is therefore natural to investigate under which circumstances it is possible to completely reconstruct the infinite direct sum feature only up to this obstruction. Building upon previous techniques from \cite{IPV10,Io11,CdSS16,DHI16,CdSS17} and using the classification of normalizers from \cite{PV12} we found infinitely many classes of $\G_n$'s for which this problem has a  positive answer.     

\begin{main}\label{main1} Let $(\G_n)_{n\in \mathbb N} $ an infinite collection of property (T), biexact, weakly amenable, icc groups. Assume that $\Lambda$ is an \emph{arbitrary} group satisfying $L(\oplus_{n\in \mathbb N} \G_n)=L(\Lambda)$. Then $\La$ admits an infinite direct sum decomposition $\La=(\oplus_{n\in\mathbb N} \La_n) \oplus A$, where $\La_n$ is icc, weakly amenable, property (T) group for all $n$ and $A$ is a icc amenable group. Moreover, for each $k\in \mathbb N$ there exist scalars $t_1,t_2,...,t_{k+1}>0$ satisfying $t_1 t_2 ... t_{k+1}=1$ and a unitary $u\in L(\La)$ so that 
\begin{equation*}\begin{split}&uL(\G_n)^{t_n}u^*=L(\La_n)\quad \text{ for all }k\geq n\geq 1 \text{; and }\\& uL(\oplus_{n\geq k+1} \G_n)^{t_{k+1}}u^*=L((\oplus_{n\geq k+1} \La_n) \oplus A).\end{split}
\end{equation*}  
\end{main}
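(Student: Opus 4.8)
The plan is to reduce to finitely many tensor factors, run an \cite{IPV10, Io11, CdSS16}-style transfer at each finite stage, and then organize the resulting finite decompositions into a single infinite one. Fix $k\in\mathbb N$ and write $M:=L(\oplus_n\G_n)=L(\G_1)\,\bar\otimes\cdots\bar\otimes\,L(\G_k)\,\bar\otimes\,L(\G^{(k)})$, where $\G^{(k)}:=\oplus_{n>k}\G_n$: this is a tensor product of $k+1$ II$_1$ factors, the first $k$ of which are group factors of property (T), biexact, weakly amenable groups, while the last is a McDuff factor. Identifying $M$ with $L(\Lambda)$ and letting $(u_\lambda)_{\lambda\in\Lambda}$ be the canonical unitaries, the core step is to peel off one tensor factor at a time. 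Suppose inductively that we have a subgroup $\Xi\le\Lambda$ with $L(\Xi)\cong L(\G^{(j-1)})=L(\G_j)\,\bar\otimes\,L(\G^{(j)})$ for some $j\le k$ (for $j=1$ take $\Xi=\Lambda$). I would apply the comultiplication $\Delta\colon L(\Xi)\to L(\Xi)\,\bar\otimes\,L(\Xi)$, $\Delta(u_\xi)=u_\xi\otimes u_\xi$, and analyze $\Delta(L(\G_j))$. Property (T) of $L(\G_j)$ forces, via Popa's intertwining theorem applied to the natural malleable deformation, $\Delta(L(\G_j))$ into a controlled corner of one of the tensor slots of $L(\Xi)\,\bar\otimes\,L(\Xi)$; the relative amenability/normalizer dichotomy of Popa--Vaes \cite{PV12} for factors of biexact, weakly amenable groups is then used to exclude the degenerate alternatives, and crucially it applies even though the complementary factor $L(\G^{(j)})$ is merely McDuff, since all the control is exerted through $L(\G_j)$. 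Unwinding the intertwining gives that the group elements $u_\xi$ have uniformly bounded \emph{height} relative to $L(\G_j)$, which, as in \cite{IPV10,CdSS16}, upgrades to an internal direct product $\Xi=\Gamma_j\times\Xi'$ together with a scalar $t_j>0$ and a unitary identifying $L(\G_j)^{t_j}$ with $L(\Gamma_j)$ and $L(\G^{(j)})^{1/t_j}$ with $L(\Xi')$; moreover $\Gamma_j$ is icc and inherits property (T) and weak amenability from $L(\Gamma_j)\cong L(\G_j)^{t_j}$, since both properties pass between an icc group and its (amplified) von Neumann algebra.

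Iterating this $k$ times produces a direct product decomposition $\Lambda=\Gamma_1\times\cdots\times\Gamma_k\times\Theta_k$ with $L(\Gamma_n)$ an amplification of $L(\G_n)$ and $L(\Theta_k)$ an amplification of $L(\G^{(k)})$; because the peeling is essentially canonical, these decompositions are nested, i.e.\ $\Gamma_1,\dots,\Gamma_k$ do not depend on the stage and $\Theta_{k-1}=\Gamma_k\times\Theta_k$. Passing to the limit I would set $\Lambda_n:=\Gamma_n$ and $A:=\bigcap_k\Theta_k$. The $\Lambda_n$ pairwise commute and generate their restricted direct sum, and $A$ centralizes all of them, so $\langle\oplus_n\Lambda_n,A\rangle=(\oplus_n\Lambda_n)\oplus A$; what remains is to show this subgroup exhausts $\Lambda$ and that $A$ is icc and amenable. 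Here one combines countability of $\Lambda$ (forced by separability of $L(\Lambda)$) with the coherent identifications $L(\Theta_k)\cong L(\G^{(k)})=\bar\otimes_{n>k}L(\G_n)$: these pin down $\Theta_k$, up to the remainder $A$, as exactly $\oplus_{n>k}\Lambda_n$, and they squeeze $L(A)$ into the tail of an infinite tensor product of II$_1$ factors, forcing $L(A)$ to be amenable -- this is precisely the von Neumann algebraic shadow of the McDuff slack $L(\oplus_n\G_n)=L(\oplus_n\G_n)\,\bar\otimes\,\mathcal R$ noted in the introduction. Checking in addition that $A$ is icc yields $L(A)\cong\mathcal R$, so $A$ is an icc amenable group and $\Lambda=(\oplus_n\Lambda_n)\oplus A$.

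With the infinite decomposition in hand, the displayed isomorphisms for each $k$ follow by reassembling the stage-$k$ unitary and scalars: one gets $uL(\G_n)^{t_n}u^*=L(\Lambda_n)$ for $n\le k$ and $uL(\oplus_{n\ge k+1}\G_n)^{t_{k+1}}u^*=L((\oplus_{n\ge k+1}\Lambda_n)\oplus A)=L(\Theta_k)$, and the normalization $t_1\cdots t_{k+1}=1$ simply records that both sides are the same finite factor, so the total amplification is trivial.

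The main obstacle is the stage-$k$ transfer, i.e.\ converting a $(k+1)$-fold tensor decomposition of $L(\Lambda)$ into an honest direct product decomposition of $\Lambda$. Two features make this harder than in \cite{CdSS16}: the complementary tensor factor $L(\G^{(k)})$ is an arbitrary McDuff factor rather than a group factor of a biexact group, so its internal group structure is unavailable and all rigidity must be routed through the property (T), biexact, weakly amenable factors via \cite{PV12,DHI16}; and the product decomposition of $\Lambda$ must be shown to be exact, not merely up to finite index, which needs the iccness and property (T) of the pieces. The secondary obstacle is the limiting step -- proving that $\bigcap_k\Theta_k$ is precisely an icc amenable direct complement, rather than a larger subgroup (e.g.\ containing infinite-support elements) that would destroy the restricted-direct-sum conclusion; this is where the tensor-product ``tail'' analysis, in the spirit of \cite{CdSS17}, does the work.
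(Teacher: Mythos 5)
Your global architecture matches the paper's: use the comultiplication along $\La$, peel off one factor at a time to get nested decompositions $\La=\La_1\oplus\cdots\oplus\La_k\oplus\Theta_k$, set $A=\bigcap_k\Theta_k$, and derive amenability of $L(A)$ from the fact that it conjugates into arbitrarily deep tails of the infinite tensor product. However, the two engines that actually drive the proof are missing or misidentified, and the steps you substitute for them would not go through.

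First, the localization of the comultiplied algebra. There is no ``natural malleable deformation'' on $L(\oplus_n\G_n)$, and property (T) is not what forces $\Delta(L(\G_j))$ into a tensor slot; the driver is the Popa--Vaes normalizer dichotomy for weakly amenable biexact groups, applied to arbitrary diffuse amenable subalgebras of $\Delta(L(\G_i))$. More importantly, your reduction to a $(k+1)$-fold tensor product cannot be carried out at a fixed finite stage: the complementary factor $L(\G^{(k)})$ is McDuff, hence not the group factor of a biexact group, so the CdSS16 two-factor scheme does not apply to $L(\G_j)\bar\otimes L(\G^{(j)})$. The bad alternative in the PV12 dichotomy is only excluded by letting the finite subset $F$ grow: one obtains $\Delta(L(\G_i))\prec M\bar\otimes L(\G_{I\setminus F})$ for \emph{every} finite $F$, and then needs a dedicated lemma asserting that a subalgebra which unitarily conjugates into arbitrarily deep tails $\bar\otimes_{i\ge k_n}M_i\bar\otimes B$ is amenable relative to $B$ (the paper's Propositions 2.8--2.9, proved via an explicit bimodule built from the flip automorphisms of $C\bar\otimes C$). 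This ``clustering at infinity implies relative amenability'' statement is the genuinely new ingredient that makes the infinite case work, and it is absent from your outline; without it the dichotomy does not close.

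Second, the upgrade from intertwining to an exact group direct sum. A height argument ``relative to $L(\G_j)$'' is not a well-defined device (height is taken with respect to a group basis of the whole algebra), and in any case height is the tool for recovering the group itself up to a character --- which is both stronger than what is true here and not what the theorem asserts (the conclusion is only an identification of the factors up to amplification). The actual mechanism is: from $L(\G_{I\setminus\{i\}})\prec_M L(\La_i)$ one builds, via finite-index subfactors and the downward basic construction, a spatial commensurability $L(\G_{I\setminus\{i\}})\cong^{com}_M L(\La_i)$; one then analyzes the virtual centralizer $\Omega=vC_\La(\La_i)$, writes $\La_i\Omega=\bigcup_k\La_i\Omega_k$ as an increasing union, and uses property (T) of $r(L(\La_i)'\cap M)rq$ to force this union to stabilize --- this is the \emph{only} place property (T) enters --- after which the CdSS17 commensurability theorems produce the exact splitting $\La=\Psi_i\oplus\Theta_i$. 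Your proposal correctly flags ``exact rather than finite-index'' as a difficulty but offers no mechanism for it; the stabilization of $\La_i\Omega_k$ and the passage from commensurability to an honest direct sum decomposition are precisely where that difficulty is resolved, and they cannot be replaced by a height estimate.
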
 

The result applies to several concrete classes of groups a such as: \begin{enumerate}\item the uniform lattices $\G_n$ in $Sp(k_n,1)$ with $k_n\geq 2$ or any icc groups in their measure equivalence class; and \item Gromov's random groups with density satisfying $3^{-1}<d<2^{-1}$.\end{enumerate}   

While the conclusion of the previous theorem shows a strong identification of the von Neumann algebras of the group factors $\G_n$'s, in general one cannot recover these groups. To see this note that Voiculescu's compression formula for free group factors gives  $L(\mathbb F_2)= L(\mathbb F_5)\otimes M_2(\mathbb C)$. This implies that $L(\oplus_{n\in \mathbb N}  \mathbb F_2)=\bar\otimes_{n\in \mathbb N} L(\mathbb F_2) = \bar\otimes_{n\in \mathbb N} (L(\mathbb F_5)\otimes M_2(\mathbb C)) =(\bar\otimes_{n\in \mathbb N} (L(\mathbb F_5))\bar \otimes \mathcal R = L((\oplus_{n\in \mathbb N} \mathbb F_5) \oplus A)$, for every icc amenable group $A$.

 Theorem \ref{main1} can be successfully used to shed light towards rigidity aspects in the $ C^*$-algebraic setting. Precisely, when it is combined with \cite[Theorem 1.3]{BKKO14} one gets the following version of infinite product rigidity for reduced group $C^*$-algebras.

\begin{mcor}\label{maincor1} Let $(\G_n)_{n\in \mathbb N} $ an infinite collection of property (T), biexact, weakly amenable, icc groups. Assume that $\Lambda$ is an \emph{arbitrary} group satisfying $\mathbb C^*_r(\oplus_{n\in \mathbb N} \G_n)=\mathbb C^*_r(\Lambda)$. Then $\La$ admits an infinite direct sum decomposition $\La=\oplus_{n\in\mathbb N} \La_n$, where the $\La_n$'s are icc, weakly amenable, property (T) groups.
\end{mcor}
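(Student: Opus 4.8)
The plan is to deduce this from Theorem \ref{main1} by transporting both the hypothesis and the conclusion across the passage from reduced group $C^*$-algebras to group von Neumann algebras. The argument has two steps: first, to upgrade the assumed isomorphism $\mathbb C^*_r(\oplus_{n\in\mathbb N}\G_n)\cong\mathbb C^*_r(\La)$ to a trace-preserving one, so that it extends to an isomorphism $L(\oplus_{n\in\mathbb N}\G_n)\cong L(\La)$ and Theorem \ref{main1} becomes applicable; second, to show that the icc amenable direct summand $A$ produced by Theorem \ref{main1} must be trivial.

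For the first step, set $G:=\oplus_{n\in\mathbb N}\G_n$. It is icc, since a direct sum of icc groups is icc, and its amenable radical is trivial: an amenable normal subgroup $N\trianglelefteq G$ maps, under each coordinate projection $G\to\G_n$, onto an amenable normal subgroup of $\G_n$, which is trivial because $\G_n$ is icc, non-amenable (it is infinite with property (T)) and biexact (see \cite{Oz03}); hence $N=\{e\}$. By \cite[Theorem 1.3]{BKKO14}, a countable discrete group has the unique trace property precisely when its amenable radical is trivial, so $\mathbb C^*_r(G)$ carries a unique tracial state, which therefore coincides with the canonical trace $\tau_G$. Since having a unique tracial state is an isomorphism invariant of a unital $C^*$-algebra, $\mathbb C^*_r(\La)$ also has a unique tracial state; the reverse implication of \cite[Theorem 1.3]{BKKO14} then forces $\La$ to have trivial amenable radical, and moreover the unique trace on $\mathbb C^*_r(\La)$ must be the canonical trace $\tau_\La$. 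Consequently any $\ast$-isomorphism $\pi\colon\mathbb C^*_r(G)\to\mathbb C^*_r(\La)$ satisfies $\tau_\La\circ\pi=\tau_G$, since $\tau_\La\circ\pi$ is a tracial state on $\mathbb C^*_r(G)$ and there is only one; thus $\pi$ is trace-preserving and extends to an isomorphism of the von Neumann algebras generated in the respective GNS representations, that is, $L(G)\cong L(\La)$.

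For the second step, I would apply Theorem \ref{main1} to the isomorphism $L(\oplus_{n\in\mathbb N}\G_n)\cong L(\La)$, obtaining a decomposition $\La=(\oplus_{n\in\mathbb N}\La_n)\oplus A$ in which each $\La_n$ is icc, weakly amenable and has property (T), and $A$ is icc amenable. But $A$, being a direct summand of $\La$, is a normal amenable subgroup of $\La$, hence is contained in the amenable radical of $\La$, which was shown above to be trivial. Therefore $A$ is trivial and $\La=\oplus_{n\in\mathbb N}\La_n$ with the $\La_n$ as claimed.

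All of the mathematical weight is carried by Theorem \ref{main1} and \cite[Theorem 1.3]{BKKO14}, so I do not expect a serious obstacle. The one point that genuinely needs care is the automatic trace-preservation of $\pi$, which is exactly what the unique trace property of $G$ buys; one also notes in passing that $\La$ is automatically countable, since $\mathbb C^*_r(\La)$ is separable.
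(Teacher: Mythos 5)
Your proposal is correct and follows essentially the same route as the paper: establish that $\oplus_n\G_n$ has trivial amenable radical, invoke \cite[Theorem 1.3]{BKKO14} to get the unique trace property (hence automatic trace-preservation and a lift of the $C^*$-isomorphism to $L(\oplus_n\G_n)\cong L(\La)$), apply Theorem \ref{main1}, and then kill the amenable summand $A$ because it is a normal amenable subgroup of a group whose amenable radical is trivial. The one place you diverge is the amenable-radical step: you reduce it via the coordinate projections to the claim that each $\G_n$ has trivial amenable radical, whereas the paper argues directly on $L(\oplus_n\G_n)$ using \cite[Theorem 1.4]{PV12} and \cite[Lemma 2.2]{CI17}; your reduction is cleaner, but the fact you need about each $\G_n$ is not a consequence of solidity \cite{Oz03} alone --- you should justify it by strong solidity of $L(\G_n)$ (which is where the standing weak amenability hypothesis enters, via \cite{OP07,CS11,PV12}), since an infinite amenable normal subgroup would give a diffuse amenable subalgebra whose normalizer generates the non-amenable factor $L(\G_n)$, while a finite normal subgroup of an icc group is trivial.
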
 

When compared side by side, Theorem \ref{main1} and Corollary \ref{maincor1} highlight again the fundamental difference between $\mathbb C^*$-algebras  and von Neumann algebras; the absence of the infinite amenable direct summand of $\La$ in the conclusion of Corollary \ref{maincor1} exemplifies once more the fact that the WOT closure is considerably larger than the normic closure, thus triggering significant loss of algebraic information in the von Neumann algebraic setting.
\vskip 0.04in 

Restricted wreath product groups manifest a remarkable rigid behavior in the von Neumann algebraic setting. In fact a large portion  of the groups/actions known to be  reconstructible of from their von Neumann algebras arise from constructions involving wreath product groups or Bernoulli shifts \cite{Po03,Po05,PV09,Io10,IPV10,PV11,BV13}. A common feature of these examples is that the core or the wreath product groups involved are amenable and in many cases even abelian. For example, with the exception of \cite{CI17}, all known examples of $W^*$-superrigid\footnote{a group $K$ is \emph{$W^*$-superrigid} if whenever $T$ is an \emph{arbitrary} group such that $L(K)=L(T)$ then $K=T$.} groups are of the form $H\wr_I \G$ where $H$ is finite \cite{IPV10,BV13,B14}. However significantly fewer rigidity results are known in general for wreath product factors $L(H\wr_I \G )$ when $H$ is nonamenable. In this direction we mention in passing Ioana's strong rigidity results which asserts that $L(\mathbb F_n\wr A )\neq L(\mathbb F_m \wr B)$ whenever $n,m\geq 2$ and $A$, $B$ are nonisomorphic icc amenable groups, \cite{Io06}. Theorem \ref{main1} can be successfully used to provide new insight towards this problem as well. For instance, using it in combination with various technical outgrowths of previous methods from \cite{Po03,Io06,IPV10,CdSS16,CI17} we obtain the following \emph{wreath product rigidity} result \emph{up to an amenable subgroup} for group factors.

\begin{main}\label{main4} Let $H$ be icc, weakly amenable, biexact property (T) group. Let $\G=\G_1\times \G_2$, where $\G_i$ are icc, biexact, property (T) group.  
Let $H \wr \G$ be the corresponding (plain) wreath product. Let $\La$ be an \emph{arbitrary} group and let $\theta: L(H \wr \G)\ra L(\La)$ be a $\ast$-isomorphism. Then one can find non-amenable icc groups  $\Sg,\Psi$, an amenable icc group $A$, and an action $\Psi \ca^\alpha A$ such that we can decompose $\La$ as semidirect product $\La = (\Sg^{(\Psi)} \oplus A)\rtimes_{\beta\oplus \alpha} \Psi$, where $\Psi\ca^{\beta} \Sg^{(\Psi)}$ is the Bernoulli shift. In addition, there exist a group isomorphism $\delta:\G \ra \Psi$, a character $\eta:\G\ra \mathbb T$, a $\ast$-isomorphism $\theta: L(H^{(\G)})\ra L(\Sg^{(\Psi)}\oplus A)$ and a unitary $u\in L(\La)$ so that for all $x\in L(H^{(\G)})$, $\g\in \G$ we have \begin{equation*}
\theta(x u_\g) =\eta(\g)u \theta(x) v_{\delta(\g)}u^*.
\end{equation*}
Here $\{u_\g \,|\, \g\in \G\}$ and $\{v_\la \,|\, \la\in \Psi\}$ are the canonical group unitaries of $L(\G)$ and $L(\Psi)$, respectively. 
\end{main}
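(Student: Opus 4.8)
The plan is to run Popa's deformation/rigidity machinery for detecting Bernoulli and wreath-product structure from group von Neumann algebras, as developed in \cite{Po03,Io06,IPV10} and refined in \cite{CdSS16,CI17}, together with the normalizer theory for actions of biexact groups from \cite{PV12}; the genuinely new input is that the infinite-product rigidity of Theorem \ref{main1} is fed into the analysis of the non-amenable Bernoulli base. Write $M=L(H\wr\G)$, set $B=L(H^{(\G)})=\bar\otimes_{\g\in\G}L(H)$ and $N=L(\G)$, so $M=B\rtimes\G$ with $\G\ca B$ the Bernoulli shift; since $M$ is a II$_1$ factor, $\La$ is icc and we identify $M=L(\La)$ through $\theta$. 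First I would equip $M$ with Popa's $s$-malleable deformation $(\widetilde M,(\alpha_t)_{t\in\mathbb R})$ attached to the Bernoulli structure — enlarging the base via $L(H)\hookrightarrow L(H)*L(\mathbb Z)$ so that $M\subseteq\widetilde M=L((H*\mathbb Z)\wr\G)$ and $\alpha_t$ performs the fiberwise rotations — and pass to the comultiplication $\Delta\colon L(\La)\to L(\La)\bar\otimes L(\La)$, $\Delta(v_\la)=v_\la\otimes v_\la$, a trace-preserving normal $\ast$-embedding. This reduces the theorem to locating $\Delta(B)$ and $\Delta(N)$ inside $M\bar\otimes M$, now deformed by $\mathrm{id}\otimes\alpha_t$.

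The second step recovers the acting group. Property (T) of $\G=\G_1\times\G_2$ makes $N$ an $\alpha_t$-rigid subalgebra of $M$, so transversality together with spectral gap rigidity shows that $\mathrm{id}\otimes\alpha_t$ converges uniformly on the unit ball of $\Delta(N)$; since the Bernoulli deformation is weakly mixing relative to $N$, the deformation-vs-rigidity dichotomy forces an intertwining $\Delta(N)\prec_{M\bar\otimes M} M\bar\otimes N$, and symmetrically relative to the first leg. A height and finite-index analysis in the spirit of \cite{IPV10} upgrades this to a unitary conjugacy of $\Delta(N)$ into $N\bar\otimes N$ up to a corner, after which the biexactness of $\G_1$ and $\G_2$ combined with the product rigidity of \cite{CdSS16} — in its relative form, with the refinements used for Theorem \ref{main1} — separates the two tensor factors and produces a group isomorphism $\delta\colon\G\to\Psi$, where $\Psi$ is the quotient of $\La$ extracted in the next step; the residual scalar ambiguity is recorded by a character $\eta\colon\G\to\mathbb T$, giving $\theta(u_\g)=\eta(\g)\,u\,v_{\delta(\g)}\,u^*$ modulo the base algebra.

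The third step recovers the base. Running spectral gap rigidity in the other direction — $B$ is $\alpha_t$-rigid by property (T) of $\G$ — together with the normalizer-classification theorem of \cite{PV12}, applicable because $H$ is biexact and weakly amenable, shows that, up to unitary conjugacy, $\theta(B)=L(\La_0)$ for a normal subgroup $\La_0\trianglelefteq\La$ with $\La=\La_0\rtimes\Psi$ and $\La/\La_0\cong\Psi$. Now Theorem \ref{main1}, applied to the $\ast$-isomorphism $L(\oplus_{n\in\mathbb N}H)=L(H^{(\G)})\cong L(\La_0)$ — valid since $H$ is icc, weakly amenable, biexact with property (T) — gives $\La_0=(\oplus_n\Sg_n)\oplus A$ with $A$ icc amenable, each $\Sg_n$ icc non-amenable and $L(H)$ amplification-isomorphic to $L(\Sg_n)$, plus the corresponding tail identifications. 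Transporting the $\G$-action on $H^{(\G)}$ through these identifications: as $\G\ca H^{(\G)}$ is the Bernoulli shift permuting the $\G$-indexed fibers and $A$ is the unavoidable McDuff amenable tail on which $\Psi$ acts by some $\alpha$, the action on $\oplus_n\Sg_n$ must, up to isomorphism and up to $A$, be the Bernoulli shift $\beta$ of $\Psi$ on $\Sg^{(\Psi)}$ for a fixed icc non-amenable group $\Sg$ — the Bernoulli form being forced because $\theta(B)$ inherits a malleable deformation with the mixing profile characterizing Bernoulli crossed products, and the single fiber $L(\Sg)$ being pinned down up to conjugacy because $L(H)$, a property (T) factor, is $\alpha_t$-rigid. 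Assembling the second and third steps yields the semidirect product decomposition of $\La$, the $\ast$-isomorphism $L(H^{(\G)})\to L(\Sg^{(\Psi)}\oplus A)$, and the intertwining identity.

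The hardest part will be that the Bernoulli base $H$ is non-amenable, so $B$ is neither a Cartan nor an abelian subalgebra — its relative commutant in $M$ is trivial while its normalizer is all of $M$ — and the locating arguments of \cite{IPV10} cannot be used verbatim but must be run at the level of the rigid pair $(B\subseteq M,\,N\subseteq M)$, with the biexactness and weak amenability of $H$ and the biexactness of the $\G_i$ serving as the structural substitute for amenability of the base, and with Theorem \ref{main1} invoked to disentangle the resulting infinite-product structure of $B$. Keeping exact track of the amenable tail $A$ throughout the normalizer and height computations — an object which, as the introduction explains, genuinely cannot be removed — is the delicate bookkeeping that makes this substantially harder than its precedents.
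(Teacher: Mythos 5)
Your proposal follows essentially the same route as the paper: comultiplication plus the location theorems of \cite{IPV10} for (generalized) Bernoulli crossed products to place $\Delta(L(\G))$ inside $L(\G\times\G)$ and $\Delta(L(H^{(\G)}))$ inside $L(H^{(\G)})\bar\otimes L(H^{(\G)})$, biexactness of the $\G_i$'s together with quasinormalizer and intertwining arguments to identify the acting group, the height lemmas and \cite[Theorem 3.1]{IPV10} to extract $\delta$ and $\eta$, and then Theorem \ref{main1} applied to the base to produce the decomposition $\oplus_n\Sg_n\oplus A$. The one place where you substitute a heuristic for an argument is the final identification of the induced $\Psi$-action on $\oplus_n\Sg_n$ with the Bernoulli shift: the paper uses no ``mixing profile'' characterization of Bernoulli crossed products, but instead shows directly, from $\theta(u_\g)L(H_i)\theta(u_\g)^*=L(H_{\g i})$ and the conclusion of Theorem \ref{main1}, that $\Sg_j$ and $\beta_{\delta(\g)}(\Sg_i)$ are commensurable normal subgroups of $\La$, and then upgrades commensurability to the exact equality $\beta_{\delta(\g)}(\Sg_i)=\Sg_j$ using icc-ness of the complementary direct summands.
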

We notice the theorem still holds for slightly more general situations, e.g.\ generalized wreath products with finite or even possible just amenable stabilizers  (see Theorem \ref{wprig2}). However at this time we do not know the full extent of these cases as it seems to rely on heavy constructions on group theory (see Remarks \ref{possibleex}).

As before, Theorem \ref{main4} in combination with \cite[Theorem 1.3]{BKKO14} lead to the following stronger version of wreath product rigidity in the context of reduced group $\mathbb C^*$-algebras.

\begin{mcor}\label{maincor2} Let $H$ be icc, weakly amenable, biexact property (T) group. Let $\G=\G_1\times \G_2$, where $\G_i$ are icc, biexact, property (T) group.  
Let $H \wr \G$ be the corresponding wreath product. Let $\La$ be an \emph{arbitrary} group so that $\mathbb C^*_r( H \wr \G)= \mathbb C^*_r(\La)$. Then $\La =\Sg\wr \G$, where $\Sg$ is an icc, weakly amenable, property (T) group. 
\end{mcor}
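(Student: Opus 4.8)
\emph{Proof strategy for Corollary \ref{maincor2}.} The plan is to reduce the $\mathbb C^*$-algebraic statement to the von Neumann algebraic Theorem \ref{main4}, and then use $\mathbb C^*$-rigidity to eliminate the amenable summand $A$ that Theorem \ref{main4} allows.

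First I would pass from the reduced group $\mathbb C^*$-algebras to their weak closures. The key input is that $G:=H\wr\G=H^{(\G)}\rtimes\G$ has trivial amenable radical. Indeed $\G=\G_1\times\G_2$ and $H$ are biexact, non-amenable and icc, hence contain no nontrivial amenable normal subgroup---a finite one would contradict the icc condition, and an infinite amenable normal one is incompatible with biexactness together with non-amenability---and a short argument with the coordinate projections $H^{(\G)}\to H$ then upgrades this to $G$. By \cite[Theorem 1.3]{BKKO14}, $\mathbb C^*_r(G)$ has a unique tracial state; transporting traces along the given isomorphism, $\mathbb C^*_r(\La)$ also has a unique tracial state, so $\La$ has trivial amenable radical by \cite[Theorem 1.3]{BKKO14} again. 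Since both algebras carry a unique trace, the $\ast$-isomorphism $\mathbb C^*_r(G)\cong\mathbb C^*_r(\La)$ is automatically trace preserving and therefore extends to a $\ast$-isomorphism $\theta\colon L(H\wr\G)\to L(\La)$.

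Next I would feed $\theta$ into Theorem \ref{main4}. It produces non-amenable icc groups $\Sg,\Psi$, an amenable icc group $A$ with an action $\Psi\ca^\alpha A$, a group isomorphism $\delta\colon\G\to\Psi$, and a decomposition $\La=(\Sg^{(\Psi)}\oplus A)\rtimes_{\beta\oplus\alpha}\Psi$ with $\Psi\ca^\beta\Sg^{(\Psi)}$ the Bernoulli shift, together with a $\ast$-isomorphism $L(H^{(\G)})\to L(\Sg^{(\Psi)}\oplus A)$ intertwining the two $\G$-actions (after identifying $\Psi$ with $\G$ via $\delta$). This is where $\mathbb C^*$-rigidity pays off: $A$ is a direct factor of $\Sg^{(\Psi)}\oplus A$ and the action $\beta\oplus\alpha$ preserves $A$, so $A$ is normal in $\La$; being amenable it lies in the amenable radical of $\La$, which we just showed is trivial, so $A=\{e\}$. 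Hence $\La=\Sg^{(\Psi)}\rtimes_\beta\Psi=\Sg\wr\Psi\cong\Sg\wr\G$.

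Finally I would transfer the remaining features to $\Sg$, which is already icc by Theorem \ref{main4}. With $A$ trivial the isomorphism above reads $\bar\otimes_{\G}L(H)=L(H^{(\G)})\cong L(\Sg^{(\Psi)})=\bar\otimes_{\Psi}L(\Sg)$, an isomorphism of infinite tensor products of biexact II$_1$ factors over countably infinite index sets. Invoking the unique prime factorization theory for infinite tensor products \cite{Is16} (the finite case originating in \cite{OP03} and strengthened in \cite{CdSS16}) one matches the tensor factors up to re-indexing and amplification, obtaining $L(\Sg)\cong L(H)^{t}$ for some $t>0$. Since property (T) and the Cowling--Haagerup constant are invariants of the stable isomorphism class of a II$_1$ factor, and $L(H)$ has property (T) and Cowling--Haagerup constant $1$ (because $H$ does), so does $L(\Sg)$; as $\Sg$ is icc this means precisely that $\Sg$ has property (T) and is weakly amenable. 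Altogether $\La\cong\Sg\wr\G$ with $\Sg$ icc, weakly amenable, property (T), as claimed. The hard part will be exactly this last step: extracting a single tensor factor from an isomorphism of infinite tensor products and controlling its stable isomorphism class requires the full infinite unique prime factorization machinery, and one must check that its hypotheses apply on both sides of the isomorphism (a priori $L(\Sg)$ is not yet known to be biexact); securing the auxiliary fact that biexact non-amenable groups have trivial amenable radical, used at the outset, is the other point to pin down.
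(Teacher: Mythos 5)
Your proposal is correct and, for the main reduction, coincides with the paper's argument: the printed proof of Corollary \ref{maincor2} is a one-line reference back to the proof of Corollary \ref{maincor1}, which is precisely your scheme --- trivial amenable radical of the source group, unique trace on both reduced $C^*$-algebras via \cite[Theorem 1.3]{BKKO14}, lifting the $\ast$-isomorphism to the von Neumann completions, applying the wreath product rigidity theorem, and then killing the summand $A$ because it is an amenable normal subgroup of $\La$ while $\La$ has trivial amenable radical. (For the triviality of the amenable radical of $G=H\wr\G$, the quickest route is: an amenable normal $B\lhd G$ meets $H^{(\G)}$ in an amenable subgroup normalized by $H^{(\G)}$, which is trivial by the argument of Corollary \ref{maincor1} applied to $\oplus_{\G}H$; then $[B,H^{(\G)}]\subseteq B\cap H^{(\G)}=1$ forces $B\subseteq C_G(H^{(\G)})=1$. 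Your appeal to a general principle that non-amenable biexact groups have trivial amenable radical is both less elementary and harder to source.)

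The one place you genuinely diverge is the final step, where $\Sg$ must be shown weakly amenable with property (T). You propose to apply Isono's infinite unique prime factorization \cite{Is16} to $L(H^{(\G)})\cong L(\Sg^{(\Psi)})$, and you correctly flag the resulting difficulty that such theorems need hypotheses on the tensor factors while $L(\Sg)$ is not yet known to be biexact or weakly amenable. The paper sidesteps this entirely: inside the proof of Theorem \ref{wprig2} the identification $\theta(L(\Sg))=L(H^{(I)})$ is fed into Theorem \ref{main1} --- the paper's own infinite product rigidity theorem, whose hypotheses sit only on the known side $\oplus_{\G}H$ and whose other side is an \emph{arbitrary} group --- and the resulting relations \eqref{401} already deliver $L(\Sg_i)\cong L(H)^{t_i}$. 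From there, icc-ness, weak amenability and property (T) of $\Sg_0$ follow from amplification-invariance of the Cowling--Haagerup constant and of property (T) for II$_1$ factors, exactly as in your last sentence. So you should replace the appeal to \cite{Is16} by a second application of Theorem \ref{main1} (or simply read off the internal relations of Theorem \ref{wprig2}); this closes the gap you yourself identified and is the only substantive deviation from the paper's route.
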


In connection with Theorem \ref{main4} it is natural to investigate whether similar statements hold if one relaxes the biexactness assumptions on $H$ or the product assumption $\G$. In this situation, building upon the previous techniques from \cite{IPV10,KV15} one can show the following strong rigidity statement holds just for property (T) groups $H$ and $\G$.

\begin{main}\label{main3} Let $H, \G$  be icc, torsion free groups. Also assume $H$ has property (T) and $\G$ admits an infinite, almost normal subgroup with relative property (T). Let $\Gamma \curvearrowright I$ be a transitive action on a countable set satisfying the following conditions:
\begin{enumerate}
\item [a)] The stabilizer ${\rm Stab}_\G(i)$ has infinite index in $\G$ for each $i \in I$;
\item [b)] There is $k\in \mathbb N$ such that for each $J\subseteq I$ satisfying  $|J|\geq k$ we have $|{\rm Stab}_\G(J)|<\infty$;
\item [c)] The orbit ${\rm Stab}_\G(i)\cdot j$ is infinite for all $i\neq j$. 
\end{enumerate}
Denote by $G=H \wr_I \G$ the corresponding generalized wreath product. Let $\La$ be any \emph{torsion free} group and let $\theta: L(G)\ra L(\La)$ be a $\ast$-isomorphism. Then $\La$ admits a generalized wreath product decomposition $\La = \Sg \wr_I \Psi$ satisfying all the properties enumerated in ${\rm a)-c)}$. In addition, there exist a group isomorphism $\delta:\G \ra \Psi$, a character $\eta:\G\ra \mathbb T$, a $\ast$-isomorphism $\theta: L(H)\ra L(\Sg)$ and a unitary $u\in L(\La)$ such that for every $x\in L(H^{(I)})$ and $\g\in \G$ we have \begin{equation*}
\theta(x u_\g) =\eta(\g)u \theta^{\otimes_I}(x) v_{\delta(\g)}u^*.
\end{equation*}
Here $\{u_\g | \g\in \G\}$ and $\{v_\la | \la\in \Psi\}$ are the canonical unitaries of $L(\G)$ and $L(\Psi)$, respectively.
\end{main}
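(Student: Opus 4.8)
The plan is to follow the deformation/rigidity blueprint for wreath product superrigidity as developed in \cite{IPV10} and refined in \cite{KV15}, adapting each step to the more permissive setting of generalized wreath products over the index set $I$ with the stabilizer conditions a)--c). Write $G = H\wr_I\G$, put $M = L(G)$, $N = L(\Sg) \bar\otimes \cdots$ once we have produced $\Sg$, and throughout identify $M = L(\La)$ via $\theta$. Denote by $P = L(H^{(I)})\subseteq M$ the base algebra. The first step is to locate inside $L(\La)$ a copy of the core Bernoulli-type subalgebra: one shows, using the relative property (T) coming from the almost normal subgroup of $\G$ (so $\G\curvearrowright G$ has the relevant ``rigid side'') together with the Bernoulli malleable deformation on $P = \bar\otimes_I L(H)$, that the image $\theta(L(\G))$ can be unitarily conjugated to normalize a finite-index extension of a tensor-product algebra; this is the classical ``deformation vs.\ rigidity'' dichotomy (spectral gap rigidity of Bernoulli actions, Popa) which forces $L(\G)\prec_M P'\cap M$ or $L(\G)$ stays put. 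Concretely I would first prove that $\theta(L(H^{(I)}))$ is, up to unitary conjugacy, of the form $L(\Sg^{(I)})$ for a suitable group $\Sg$, by running the IPV ``wreath-from-algebra'' reconstruction: the crux there is to show that $P\subseteq M$ is a ``core'' subalgebra in the sense that its relative commutant is trivial and the embedding remembers the tensor/co-induced structure, and to produce the group $\Sg$ as the image under $\theta$ of generators of $H^{(I)}$ together with a cocycle untwisting.

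The second step is the group-theoretic reconstruction of the acting group: having the base $L(\Sg^{(I)})\subseteq L(\La)$ identified (after conjugating by $u$), the complement is an algebra $Q$ with $M = L(\Sg^{(I)})\rtimes Q$ in an appropriate crossed-product sense, and one must show $Q = L(\Psi)$ for a group $\Psi$ acting on $\Sg^{(I)}$ by a generalized Bernoulli shift over $I$. Here the torsion-freeness of $\La$ is used exactly as in \cite{IPV10}: it rules out the finite ``twisting'' subgroups that otherwise obstruct recognizing a group von Neumann algebra, so that $\Psi$ emerges as an honest group of group-like unitaries in $Q$. The conditions a)--c) on $\G\curvearrowright I$ transfer to $\Psi\curvearrowright I$ because they are encoded algebraically: condition b) (large subsets have finite stabilizers) is what makes the relative commutant computations finite-dimensional; condition c) (infinite sub-orbits) gives the ``mixing relative to stabilizers'' property that powers the intertwining arguments; condition a) (infinite index stabilizers) ensures the base is diffuse in the right way. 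One then gets an isomorphism $\delta\colon\G\to\Psi$ intertwining the two actions on $I$, together with the character $\eta$ absorbing the residual scalar $2$-cocycle, yielding the displayed formula $\theta(xu_\g) = \eta(\g)u\,\theta^{\otimes_I}(x)\,v_{\delta(\g)}u^*$.

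The third step is to assemble these into the stated semidirect/wreath product decomposition of $\La$ at the group level. Once $L(\Sg^{(I)})$ and $\{v_{\delta(\g)}\}$ generate $L(\La)$ with the Bernoulli commutation relations, a standard argument (groups of group-like unitaries, as in \cite{IPV10}, plus \cite{KV15} for the generalized-wreath bookkeeping) shows $\La \cong \Sg\wr_I\Psi$ as abstract groups, and the action $\Psi\curvearrowright I$ inherits a)--c) from $\G\curvearrowright I$ via $\delta$. The property (T) of $\Sg$ and $H$ being isomorphic at the level of $L(H)\cong L(\Sg)$ is then obtained from the fact that property (T) is a von Neumann algebra invariant for icc groups (Connes--Jones), hence passes through $\theta\colon L(H)\to L(\Sg)$, and torsion-freeness of $\Sg$ likewise.

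The main obstacle I expect is the first step --- pinning down that $\theta(L(H^{(I)}))$ is conjugate to a full generalized-Bernoulli base $L(\Sg^{(I)})$ --- because $H$ is only assumed property (T), not biexact or weakly amenable, so the powerful normalizer classification of \cite{PV12} used in Theorem \ref{main1} is unavailable here. One is therefore forced to rely purely on the malleable-deformation/spectral-gap machinery of \cite{IPV10,KV15} and on the $\G\curvearrowright I$ hypotheses to substitute for biexactness; in particular, verifying that the intertwining $L(\G)\prec P'\cap M$ produced by the deformation argument can be upgraded to a genuine unitary conjugacy (rather than a mere corner embedding) requires delicate use of conditions b) and c) together with a transitivity/cocycle-superrigidity argument, and getting this step clean for \emph{arbitrary} torsion-free $\La$ (with no a priori structure) is the technical heart of the theorem.
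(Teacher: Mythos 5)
Your proposal correctly identifies the ambient toolbox (malleable deformations plus the rigidity supplied by property (T) of $H$ and relative property (T) of $\G_0\leqslant\G$, the height/\cite{KV15} step, and the role of hypotheses a)--c)), but it is missing the single device on which the paper's entire proof runs: the comultiplication $\Delta:L(\La)\to L(\La)\bar\otimes L(\La)$, $\Delta(v_\la)=v_\la\otimes v_\la$, taken along the \emph{unknown} group $\La$. All the deformation/rigidity arguments in the paper are applied not to subalgebras of $M$ but to $\Delta(L(H^{(I)}))$ and $\Delta(L(\G_0))$ inside $M\bar\otimes M=(A\bar\otimes A)\rtimes(\G\times\G)$, yielding first $\Delta(L(H^{(I)}))\prec^s L(H^{(I)})\bar\otimes L(H^{(I)})$ and a unitary conjugating $\Delta(\G)$ into $\mathbb T(\G\times\G)$, and only then (via Steps 4--5 of \cite[Theorem 8.2]{IPV10}) the identity $\theta(u_\g)=\eta(\g)v_{\rho(\g)}$. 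Without $\Delta$ there is no mechanism to produce subgroups of $\La$ at all: your plan to "produce the group $\Sg$ as the image under $\theta$ of generators of $H^{(I)}$" cannot work, because the image of a group unitary of $H^{(I)}$ under an abstract $\ast$-isomorphism is merely a unitary of $L(\La)$ with no reason to be (a scalar multiple of) an element of $\La$. The paper recognizes $\theta(L(H^{(I)}))$ and each coordinate $\theta(L(H_i))$ as group von Neumann algebras of subgroups of $\La$ by verifying the criterion $\Delta(B)\subseteq B\bar\otimes B$ and invoking \cite[Lemma 7.1.2]{IPV10}; this is the step that substitutes for the unavailable normalizer classification of \cite{PV12}, which is exactly the obstacle you flag but do not resolve.

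A related structural error: your order of operations is reversed. You propose to pin down the base $L(\Sg^{(I)})$ first and then reconstruct $\Psi$, but the paper must identify the acting group first, because the identification of the individual coordinates uses that $\Delta(\theta(L(H_i)))$ is fixed by ${\rm Ad}(\theta(u_\g)\otimes\theta(u_\g))$ for $\g\in{\rm Stab}_\G(i)$, and condition c) (infinitude of ${\rm Stab}_\G(i)\cdot j$ for $j\neq i$) is used precisely to make this adjoint representation weakly mixing on the orthocomplement of $L^2(\theta(L(H_i))\bar\otimes\theta(L(H_i)))$, forcing $\Delta(\theta(L(H_i)))\subseteq\theta(L(H_i))\bar\otimes\theta(L(H_i))$ and hence $\theta(L(H_i))=L(\Sg_i)$. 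This requires already knowing that the $\theta(u_\g)$ are group unitaries of $\La$ up to scalars. Transitivity of $\G\ca I$ then gives $\Sg_i\cong\Sg_0$ and $\La=\Sg_0\wr_I\Psi$. So the skeleton you describe is in the right family of arguments, but as written the first step has no viable implementation and the logical dependencies between the two reconstructions are inverted.
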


The previous theorem applies to many natural families of generalized wreath groups $H\wr_I\G$, including: a) any icc torsion free prop (T ) group $H$ and any torsion free, hyperbolic, property (T) group $\G$ together with a maximal amenable subgroup $\Sg<\G$ and the action $\G\ca I=\G/\Om$ by translation on right cosets $\G/\Om$; b) (see \cite{IPV10}) Let $H$ be any icc property (T) group and $\G=\Z^2 \rtimes SL_2(\Z)$. Take a matrix $A$ in $SL_2(\Z)$ having the modulus of the eigenvalues larger than one, and the subgroup $S=\{B\in SL_2(\Z)\mid BAB^{-1}=\pm A\}$. The action $\G \curvearrowright I=\G/S$ is given by left translations.

\subsection{Acknowledgments} The first author is grateful to the American Institute of Mathematics for their kind hospitality during the workshop ``Classification of group von Neumann algebras'' where part of this work was carried out. The first author is also indebted to Professor Denis Osin for many discussions and suggestions regarding this project.

\section{Preliminaries}

\subsection{Notations} Given a von Neumann algebra 
$M$ we will denote by $\mathscr U(M)$ its unitary group and by $\mathscr P(M)$ the set of all its nonzero projections. All algebras inclusions $N\subseteq M$ are assumed unital unless otherwise specified. For any von Neumann subalgebras $P,Q\subseteq M$  we denote by $P\vee Q$ the von Neumann algebra they generate in $M$.

All von Neumann algebras $M$ considered in this article will be tracial, i.e., endowed with a unital, faithful, normal functional $\tau:M\ra \mathbb C$  satisfying $\tau(xy)=\tau(yx)$ for all $x,y\in M$. This induces a norm on $M$ by the formula $\|x\|_2=\tau(x^*x)^{1/2}$ for all $x\in M$. The $\|\cdot\|_2$-completion of $M$ will be denoted by $L^2(M)$.

For a countable group $\G$ we denote by $\{ u_\g | \g\in \G\} \in U(\ell^2\G)$ its left regular representation given by $u_\g(\delta_\la ) = \delta_{\g\la}$, where $\delta_\la:\G\ra \mathbb C$ is the Dirac mass at $\{\la\}$. The weak operatorial closure of the linear span of $\{ u_\g | \g\in \G\}$ in $B(\ell^2\G)$ is the so called group von Neumann algebra and will be denoted by $L(\G)$. $L(\G)$ is a II$_1$ factor precisely when $\G$ has infinite non-trivial conjugacy classes (icc).

Given a group $\G$ and a subset $F\subseteq \G$ we will be denoting by $\langle F\rangle$ the subgroup of $\G$ generated by $F$. Given a group action $\G\curvearrowright I$ on a countable set $I$, for any subset $\mathcal F\subset I$ we denote $Stab(\mathcal F)=\{\g\in\G\mid g\cdot i=i,\forall i\in \mathcal F\}$ and $Norm(\mathcal F)=\{\g\in\G\mid \g\cdot\mathcal F=\mathcal F\}$.

Given a subgroup  $\La \leqslant \G$ we will often consider the virtual centralizer of $\La$ in $\G$, i.e. $vC_\G(\La)=\set{\g\in \G : |\g^{\Lambda}|<\infty} $. Notice $vC_\G(\La)$ is a subgroup normalized by $\La$. When $\La=\G$, the virtual centralizer is denoted by $vZ(\G):=vC_\G(\G)$ and called the virtual center of $\G$; this is nothing else but the FC-radical of $\G$. Hence $\G$ is icc precisely when $vZ(\G)=1$.

\subsection{Popa's intertwining techniques}Over a decade ago, Popa introduced  in \cite [Theorem 2.1 and Corollary 2.3]{Po03} a powerful analytic criterion for identifying intertwiners between arbitrary subalgebras of tracial von Neumann algebras. This is now termed \emph{Popa's intertwining-by-bimodules technique}.

\begin {theorem}\cite{Po03} \label{corner} Let $(M,\tau)$ be a separable tracial von Neumann algebra and let $P, Q\subseteq M$ be (not necessarily unital) von Neumann subalgebras. 
Then the following are equivalent:
\begin{enumerate}
\item There exist $ p\in  \mathcal P(P), q\in  \mathcal P(Q)$, a $\ast$-homomorphism $\theta:p P p\rightarrow qQ q$  and a partial isometry $0\neq v\in q M p$ such that $\theta(x)v=vx$, for all $x\in p P p$.
\item For any group $\mathcal G\subset \mathscr U(P)$ such that $\mathcal G''= P$ there is no sequence $(u_n)_n\subset \mathcal G$ satisfying $\|E_{ Q}(xu_ny)\|_2\rightarrow 0$, for all $x,y\in  M$.
\end{enumerate}
\end{theorem}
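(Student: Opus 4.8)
\textbf{Proof proposal for Theorem \ref{corner} (Popa's intertwining criterion).}

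The plan is to prove the contrapositive of the equivalence, showing that the \emph{negation} of (1) is equivalent to the \emph{negation} of (2). So assume first that (1) fails, and fix a group $\mathcal G\subset \mathscr U(P)$ with $\mathcal G''=P$; I want to produce a sequence $(u_n)_n\subset\mathcal G$ with $\|E_Q(xu_ny)\|_2\to 0$ for all $x,y\in M$. The natural object to analyze is the $M$-$M$ bimodule $L^2(M)$ viewed with the left action of $P$ and right action of $Q$; more precisely, consider the orthogonal projection $e$ onto the $\|\cdot\|_2$-closure of $\sum_j M q_j Q$-type subspaces, or rather work directly with the convex set $\mathcal C=\overline{\mathrm{conv}}^{\,\|\cdot\|_2}\{u^* x u : u\in\mathcal G\}$ for a fixed $x\in M$. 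The key step is: let $\mathcal K$ be the $\|\cdot\|_2$-closed convex hull in $L^2(M)$ of $\{u x u^* : u\in\mathcal G\}$ (or an appropriate two-sided variant adapted to the $Q$-bimodule structure on $L^2\langle M,e_Q\rangle$, the basic construction), and let $a$ be the unique element of minimal $\|\cdot\|_2$-norm in $\mathcal K$. By uniqueness of the minimizer and $\mathcal G$-invariance of $\mathcal K$, the element $a$ commutes with $\mathcal G$, hence with $P=\mathcal G''$. One then argues that if $a\neq 0$ then its support projection, together with a polar-decomposition argument, yields the partial isometry $v$ and the $\ast$-homomorphism $\theta$ required in (1) — contradicting our assumption — so $a=0$. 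From $a=0$ one extracts, by a standard convexity/Hahn--Banach argument, a sequence $u_n\in\mathcal G$ with the desired decay $\|E_Q(x u_n y)\|_2\to 0$, first for $x,y$ in a dense subset and then for all $x,y\in M$ by approximation.

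Conversely, suppose (1) holds, with $p,q,\theta,v$ as stated; I must show (2) fails, i.e. exhibit \emph{some} generating unitary group $\mathcal G$ of $P$ and show that for \emph{that} $\mathcal G$ no such sequence exists — but in fact the cleaner route is to show directly that the relation $\theta(x)v=vx$ forces, for \emph{every} $u\in\mathscr U(P)$ (in particular every element of any generating group), a uniform lower bound $\|E_Q(v^* u v)\|_2\geq \|v\|_2^2/\|v\|_\infty^2>0$ or similar. Indeed $v^* u v = v^*\theta(pup)v$ modulo controlled error terms using $vp=v$, and $\theta(pup)\in qQq$, so $E_Q(v^*uv)$ captures a fixed nonzero quantity independent of $u$; this obstructs the convergence $\|E_Q(x u_n y)\|_2\to 0$ upon taking $x=v^*$, $y=v$. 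This direction is essentially a direct computation once one is careful about the non-unital corners $p,q$ and the fact that $\theta$ need not be unital or trace-preserving.

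The main obstacle, as in Popa's original argument, is the forward direction: extracting the partial isometry and the $\ast$-homomorphism from the minimal-norm element $a$ of the invariant convex set. One must check that $a$ (a priori just an element of $L^2$ of the basic construction $\langle M,e_Q\rangle$) is in fact a bounded operator lying in $p_0\langle M,e_Q\rangle p_0$ with finite trace and positive, that its spectral projections are again finite-trace and $P$-almost-invariant, and then pass from a suitable spectral projection $f$ of $a$ (which satisfies $f\in\langle M,e_Q\rangle$, $f\lesssim$ a finite sum of $e_Q$-projections, and $x f = f x$ for $x\in P$) to an honest $M$-bimodule-theoretic corner inside which $P$ is intertwined into $Q$. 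The translation of ``$P$-central finite projection in the basic construction'' into ``$\ast$-homomorphism $pPp\to qQq$ with intertwining partial isometry'' is the technical heart; it uses the identification of finite projections in $\langle M,e_Q\rangle$ majorized by $e_Q$ with $Q$-submodules of $L^2(M)$ that are finitely generated as right $Q$-modules, plus a standard argument producing $v$ from such a submodule. I would also need the elementary but essential facts that $E_Q$ extends to the Jones projection $e_Q$, that $\langle M,e_Q\rangle$ carries a semifinite trace $\mathrm{Tr}$ with $\mathrm{Tr}(x e_Q y)=\tau(xy)$, and that $\|\cdot\|_{2,\mathrm{Tr}}$-closed bounded convex sets have unique minimal-norm elements — all of which are available from the basic-construction machinery and do not need to be reproved here.
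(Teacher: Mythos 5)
First, note that the paper does not prove this statement at all: it is quoted verbatim from Popa's work \cite{Po03}, so the only meaningful comparison is with the standard proof there (and in its later expositions). Your sketch of the hard direction --- minimal $\|\cdot\|_{2,\mathrm{Tr}}$-norm element $a$ of the closed convex hull of $\{ufu^*: u\in\mathcal G\}$ for a suitable finite-trace positive $f\in\langle M,e_Q\rangle$ built from the witnesses of non-decay, $P$-centrality of $a$ by uniqueness of the minimizer, passage to a finite-trace spectral projection in $P'\cap\langle M,e_Q\rangle$, and the dictionary with right-$Q$-finitely-generated $P$-$Q$ sub-bimodules of $L^2(M)$ --- is exactly Popa's argument, and your remarks about what needs checking there (boundedness of $a$, finiteness of the trace of its spectral projections, a diagonal extraction over a countable dense subset of $M$ using separability) identify the right issues.

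There is, however, a genuine flaw in your ``easy'' direction $(1)\Rightarrow(2)$. You propose to obstruct the decay with the single pair $x=v$, $y=v^*$ via a uniform lower bound of the type $\|E_Q(vuv^*)\|_2\geq \|v\|_2^2/\|v\|_\infty^2$ for \emph{every} $u\in\mathscr U(P)$. This bound is false. Writing $vuv^*=v(pup)v^*=\theta(pup)\,vv^*$, one gets $E_Q(vuv^*)=\theta(pup)E_Q(vv^*)$, and $pup$ can vanish identically for suitable unitaries $u\in\mathscr U(P)$: if $p$ is equivalent in $P$ to a subprojection of $1_P-p$ (e.g.\ $P$ a II$_1$ factor and $\tau(p)\leq\tau(1_P)/2$), take a partial isometry $w\in P$ with $w^*w=p$, $ww^*\leq 1_P-p$ and set $u=w+w^*+(1_P-p-ww^*)$; then $pup=0$, hence $E_Q(vuv^*)=0$. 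So no single pair $(x,y)$ gives a uniform obstruction. The correct argument must exploit that the decay in (2) is required for \emph{all} $x,y\in M$: this is equivalent, by $\|\cdot\|_{2,\mathrm{Tr}}$-density of $\mathrm{span}\{xe_Qy\}$, to $\mathrm{Tr}(u_n a u_n^* b)\to 0$ for all positive finite-trace $a,b\in\langle M,e_Q\rangle$. From (1) one manufactures a positive element $a_0=v^*e_Qv$ with $0<\mathrm{Tr}(a_0)<\infty$ commuting with $pPp$ (since $xv^*e_Qv=v^*\theta(x)e_Qv=v^*e_Qvx$ for $x\in pPp$), promotes it to a finite-trace positive $a\in P'\cap\langle M,e_Q\rangle$ supported under the central support of $p$ (summing $w_ka_0w_k^*$ over partial isometries $w_k\in P$ with $w_k^*w_k\leq p$, truncating to keep $\mathrm{Tr}(a)<\infty$), and then $\mathrm{Tr}(u_nau_n^*a)=\mathrm{Tr}(a^2)>0$ for every $n$ contradicts the decay. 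Without this detour through the basic construction the implication does not close.
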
 
\vskip 0.02in
\noindent If one of the two equivalent conditions from Theorem \ref{corner} holds then we say that \emph{ a corner of $P$ embeds into $Q$ inside $M$}, and write $P\prec_{M}Q$. If we moreover have that $P p'\prec_{M}Q$, for any projection  $0\neq p'\in P'\cap 1_P M 1_P$ (equivalently, for any projection $0\neq p'\in\mathcal Z(P'\cap 1_P  M 1_P)$), then we write $P\prec_{M}^{s}Q$.
\vskip 0.02in

\subsection{Quasinormalizers of groups and algebras}Given groups $\Om\leqslant \G$, the \emph{one-side quasinormalizer semigroup} $QN^{(1)}_{\G}(\Om)\subseteq \G$ is the set of all $\g\in \G$ for which there is a finite set $F\subseteq \G$ so that $\Om\g\subseteq F\Om$, \cite[Section 5]{JGS10}; equivalently, $\g\in QN^{(1)}_{\G}(\Om)$ iff $[\Omega:\g\Omega\g^{-1}\cap\Omega]<\infty$. Thus  $QN^{(1)}_\G(\Om)$ coincides with the one-side commensurator of $\Om$ in $\G$.
\vskip 0.05in
\noindent Similarly, the \emph{quasinormalizer} (also called the \emph{commensurator}) $QN_\G(\Om)$ is the set of all $\g \in \G$ for which there exists a finite set $F\subseteq \G$ such that $\Om\g\subseteq F\Om$ and $\g\Om\subseteq \Om F$; 
equivalently, $\g\in QN_\G(\Om)$ iff $[\Omega:\g\Omega\g^{-1}\cap\Omega]<\infty$ and $[\g\Omega\g^{-1} :\g\Omega\g^{-1}\cap\Omega]<\infty$.
From definitions one checks that $QN_\G(\Om)\leqslant \G$ is a subgroup satisfying $\Omega\subseteq QN_\G(\Om)\subseteq QN^{(1)}_\G(\Om)$. 
\vskip 0.07in 
\noindent The von Neumann algebraic counterparts of (one-sided) quasinormalizers have played a major role in the recent classification results in this area \cite{Po99,Po01,Po03,IPP05}. Given an inclusion $Q \subseteq M$, the quasi-normalizer $QN_M(Q)$ is the $*$-algebra of all elements $x\in M$ such that there exist $x_1,x_2,...,x_k\in M$ so that  $Q x\subseteq \sum_i x_i Q$ and $x Q \subseteq \sum_i Q x_i$, \cite{Po99}. The von Neumann algebra $QN_M (Q)''$ is called the \emph{quasi-normalizing algebra of $Q$ inside $M$}. Similarly, the intertwiner space $QN^{(1)}_M(Q)$ is the set of all $x\in M$ such that there exist $x_1,x_2,...,x_k\in M$ so that  $Q x\subseteq \sum_i x_i Q$, \cite{Po99,JGS10}. The von Neumann algebra $QN^{(1)}_M(Q)''$ is called the \emph{one-sided quasinormalizer algebra of $Q$ inside $M$}.  
\vskip 0.05in
\noindent As usual $N_M(Q)=\{u\in \mathcal U(M) \,|\, uQu^*=Q \}$ denotes the \emph{normalizing group} and $ N_M(Q)''$ denotes the \emph{normalizing algebra of $Q$ in $M$}. Notice that $Q\subseteq  N_M(Q)\subseteq QN_M(Q)\subseteq QN_M^{(1)}(Q)\subseteq M$. 
\vskip 0.05in
\noindent The following computations of (one-sided) quasinormalizer algebras for inclusions of group von Neumann algebras will be essential for our arguments.
\vskip 0.01in
\begin{theorem}[Corollary 5.2 \cite{JGS10}]\label{iqncalc} If $\Om\leqslant \G$ is an inclusion of groups, the following hold:
\begin{enumerate}
\item $QN_{L(\G)}(L(\Om))'' = L(H_1)$, where $H_1= QN_\G(\Om)=QN^{(1)}_\G(\Om)\cap QN^{(1)}_\G(\Om)^{-1}$;
\item $QN^{(1)}_{L(\G)}(L(\Om))'' = L(H_2)$, where $H_2= \langle QN^{(1)}_\G(\Om)\rangle \leqslant \G$.
\end{enumerate}  
\end{theorem}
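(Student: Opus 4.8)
The plan is to prove each displayed equality by two inclusions, working throughout inside the Fourier decomposition of $L(\Gamma)$. Write $\widehat{x}(\gamma)=\tau(xu_\gamma^*)$, $\mathrm{supp}(x)=\{\gamma:\widehat{x}(\gamma)\neq 0\}$, and recall $L(H)=\{x\in L(\Gamma):\mathrm{supp}(x)\subseteq H\}$ for $H\leqslant\Gamma$. Since $QN^{(1)}_{L(\Gamma)}(L(\Omega))$ and $QN_{L(\Gamma)}(L(\Omega))$ are unital subalgebras and $L(H_1),L(H_2)$ are von Neumann algebras, it suffices to show: (i) $u_h\in QN^{(1)}_{L(\Gamma)}(L(\Omega))$ for $h\in QN^{(1)}_\Gamma(\Omega)$ and $u_h\in QN_{L(\Gamma)}(L(\Omega))$ for $h\in QN_\Gamma(\Omega)$; and (ii) $\mathrm{supp}(x)\subseteq H_2$ for every $x\in QN^{(1)}_{L(\Gamma)}(L(\Omega))$, and $\mathrm{supp}(x)\subseteq H_1$ for every $x\in QN_{L(\Gamma)}(L(\Omega))$. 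Indeed, (i) gives the ``$\supseteq$'' inclusions: the von Neumann algebra generated by $QN^{(1)}_{L(\Gamma)}(L(\Omega))$ is closed under products and adjoints, so it contains $u_{h_1}^{\pm1}\cdots u_{h_k}^{\pm1}=u_{h_1^{\pm1}\cdots h_k^{\pm1}}$ for $h_j\in QN^{(1)}_\Gamma(\Omega)$, hence all $u_h$ with $h\in\langle QN^{(1)}_\Gamma(\Omega)\rangle=H_2$, hence $L(H_2)$; similarly $L(H_1)\subseteq QN_{L(\Gamma)}(L(\Omega))''$. And (ii), giving $QN^{(1)}_{L(\Gamma)}(L(\Omega))\subseteq L(H_2)$ and $QN_{L(\Gamma)}(L(\Omega))\subseteq L(H_1)$, yields the reverse inclusions because $L(H_1),L(H_2)$ are von Neumann algebras.

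Part (i) is elementary: if $h\in QN^{(1)}_\Gamma(\Omega)$, then $\Omega h\Omega=\bigsqcup_{i=1}^{k}c_i\Omega$ is a finite union of right cosets, and using that $E_{L(\Omega)}\colon L(\Gamma)\to L(\Omega)$ acts on Fourier coefficients as the projection onto $\ell^2\Omega$ one checks $au_h=\sum_{i=1}^k u_{c_i}\,E_{L(\Omega)}(u_{c_i}^*au_h)$ for $a\in L(\Omega)$, so $L(\Omega)u_h\subseteq\sum_i u_{c_i}L(\Omega)$; if moreover $h\in QN_\Gamma(\Omega)$ the double coset is also a finite union of left cosets, giving the symmetric inclusion. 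Part (ii) is the substance. Let $x\in QN^{(1)}_{L(\Gamma)}(L(\Omega))$ with $L(\Omega)x\subseteq\sum_{i=1}^n x_iL(\Omega)$; then $\mathcal M:=\overline{\sum_i x_iL(\Omega)}^{\,\|\cdot\|_2}$ is a right $L(\Omega)$-module of finite right dimension with $L(\Omega)x\subseteq\mathcal M$. Decompose $L^2(L(\Gamma))=\bigoplus_D\ell^2 D$ over $(\Omega,\Omega)$-double cosets $D$; this is an orthogonal decomposition into $L(\Omega)$-bimodules, and each $P_D$ commutes with both $L(\Omega)$-actions since $\Omega D\Omega=D$. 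Fix $g\in\mathrm{supp}(x)$, set $D=\Omega g\Omega$ and $x^D:=P_Dx\neq 0$. Then $L(\Omega)x^D\subseteq\overline{P_D\mathcal M}$, which (by polar decomposition of the right-module map $P_D|_{\mathcal M}$) has right dimension $\leqslant\dim_{\mathrm{right}}\mathcal M<\infty$; so $\mathcal K:=\overline{L(\Omega)x^DL(\Omega)}$ is a finite right-dimensional $L(\Omega)$-bimodule inside $\ell^2 D$ containing the nonzero vector $x^D$. The Key Lemma below then forces $[\Omega:g\Omega g^{-1}\cap\Omega]<\infty$, i.e.\ $g\in QN^{(1)}_\Gamma(\Omega)\subseteq H_2$; as $g\in\mathrm{supp}(x)$ was arbitrary, $\mathrm{supp}(x)\subseteq H_2$. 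In the two-sided case one also has $xL(\Omega)\subseteq\sum_j L(\Omega)y_j$, so $\mathcal K$ has finite left dimension too, and the left version of the lemma gives $[\Omega:g^{-1}\Omega g\cap\Omega]<\infty$ as well, placing $g\in QN^{(1)}_\Gamma(\Omega)\cap QN^{(1)}_\Gamma(\Omega)^{-1}=QN_\Gamma(\Omega)=H_1$.

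\emph{Key Lemma.} If $D=\Omega g\Omega$, $K:=g\Omega g^{-1}\cap\Omega$, and $0\neq\xi\in\ell^2 D$ is such that $\overline{L(\Omega)\xi L(\Omega)}$ has finite right $L(\Omega)$-dimension, then $[\Omega:K]<\infty$. To prove it, choose a transversal $R\subseteq\Omega$ for $\Omega/K$, so $D=\bigsqcup_{r\in R}rg\Omega$ and, as a right $L(\Omega)$-module, $\ell^2 D\cong\bigoplus_{r\in R}\mathcal H_r$ with each $\mathcal H_r:=\ell^2(rg\Omega)\cong L^2(L(\Omega))$. A direct computation shows that left multiplication by $u_\omega$ ($\omega\in\Omega$) is a \emph{monomial} operator for this decomposition: it permutes the $\mathcal H_r$ according to the \emph{transitive} left-translation action of $\Omega$ on $R=\Omega/K$, composed with left-multiplication unitaries on the fibres. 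Let $\mathrm{Tr}$ denote the canonical normal semifinite trace on the commutant of the right $L(\Omega)$-action on $\ell^2 D$, normalised so $\mathrm{Tr}(p_r)=1$ for the projection $p_r$ onto $\mathcal H_r$ (hence $\mathrm{Tr}(1)=|R|=[\Omega:K]$), and let $P$, $F$ be the projections onto $\overline{L(\Omega)\xi L(\Omega)}$ and $\overline{\xi L(\Omega)}$. After relabelling $R$ we may assume the $\mathcal H_o$-component of $\xi$, where $o\in R$ is the identity coset, is nonzero; as $\xi\in\mathrm{ran}(F)$ this gives $\mathrm{Tr}(p_oF)=\mathrm{Tr}(p_oFp_o)>0$. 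For each $r\in R$ pick $\omega_r\in\Omega$ whose left-multiplication operator carries $\mathcal H_o$ onto $\mathcal H_r$; then $u_{\omega_r}\overline{\xi L(\Omega)}=\overline{(u_{\omega_r}\xi)L(\Omega)}\subseteq\overline{L(\Omega)\xi L(\Omega)}$, so $P\geqslant u_{\omega_r}Fu_{\omega_r}^*$, and since $u_{\omega_r}^*p_ru_{\omega_r}=p_o$ we get $\mathrm{Tr}(p_rPp_r)\geqslant\mathrm{Tr}(p_r u_{\omega_r}Fu_{\omega_r}^*p_r)=\mathrm{Tr}(u_{\omega_r}^*p_ru_{\omega_r}F)=\mathrm{Tr}(p_oF)=:s>0$. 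Summing over $r$, $\mathrm{Tr}(P)=\sum_{r\in R}\mathrm{Tr}(p_rPp_r)\geqslant|R|\,s$, which is finite only if $|R|=[\Omega:K]<\infty$. The hard point is precisely this \emph{uniformity} $\mathrm{Tr}(p_rPp_r)\geqslant s$ for all $r$: a priori $\overline{L(\Omega)\xi L(\Omega)}$ could sit ``diagonally'' across infinitely many fibres $\mathcal H_r$ with positive but summable trace contributions — a situation fully compatible with $|R|=\infty$ — and it is transitivity of the left $\Omega$-action on $R=\Omega/K$, transporting one fixed nonzero component of $\xi$ onto every fibre, that rules this out; all remaining steps (the induced-bimodule description of $\ell^2(\Omega g\Omega)$, the coset arithmetic in Part (i), and the inequality $\dim_{\mathrm{right}}\overline{T\mathcal M}\leqslant\dim_{\mathrm{right}}\mathcal M$ for a bounded right-module map $T$) are routine.
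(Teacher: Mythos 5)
Your proof is correct, and it is worth noting that the paper itself offers no argument for this statement: it is quoted verbatim as Corollary 5.2 of [JGS10] (with the two-sided case going back to Popa's computation in [Po99, Po03]). So the comparison here is with the cited literature rather than with an in-paper proof. Your route — decompose $\ell^2\Gamma$ over $(\Omega,\Omega)$-double cosets, cut a quasi-normalizing element $x$ by the projection $P_D$ onto a double coset meeting $\mathrm{supp}(x)$, and then show that a nonzero $L(\Omega)$-bimodule of finite right dimension inside $\ell^2(\Omega g\Omega)$ forces $[\Omega:g\Omega g^{-1}\cap\Omega]<\infty$ — is essentially the classical argument for the two-sided statement, and you correctly observe that it one-sidedizes: the hypothesis $L(\Omega)x\subseteq\sum_i x_iL(\Omega)$ alone already bounds the \emph{right} dimension of $\overline{L(\Omega)x^DL(\Omega)}$, which is all the Key Lemma needs, and the second hypothesis is only invoked (via the mirror-image lemma) to additionally place $g$ in $QN^{(1)}_\Gamma(\Omega)^{-1}$. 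This is cleaner than the published route in [JGS10], which reaches the one-sided computation through their relative weak asymptotic homomorphism property. You also correctly identify and resolve the one genuine crux, namely the uniform lower bound $\mathrm{Tr}(p_rPp_r)\geq\mathrm{Tr}(p_oFp_o)>0$ across all fibres: it is exactly the transitivity of $\Omega\curvearrowright\Omega/(g\Omega g^{-1}\cap\Omega)$, implemented by the monomial unitaries $u_{\omega_r}$ lying in the commutant of the right action, that converts ``finite total trace'' into ``finitely many fibres.'' Two small points you handle correctly but should keep explicit if you write this up: $QN^{(1)}_{L(\Gamma)}(L(\Omega))$ is an algebra but not $*$-closed, so the $\supseteq$ inclusion for (2) must pass to the generated von Neumann algebra before using $u_h^*=u_{h^{-1}}$; and since $QN^{(1)}_\Gamma(\Omega)$ is only a sub-semigroup, the support computation lands you in $\{x:\mathrm{supp}(x)\subseteq QN^{(1)}_\Gamma(\Omega)\}$, which is not an algebra — the containment in the von Neumann algebra $L(H_2)$ with $H_2=\langle QN^{(1)}_\Gamma(\Omega)\rangle$ is what closes the argument.
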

\begin{theorem}[Theorem \cite{Po03}; Proposition 6.2 \cite{JGS10}]\label{intcorner} Let $Q\subseteq M$ be an inclusion of tracial von Neumann algebras. Then for any $p\in \mathcal P(Q)$ we have  \begin{enumerate}\item $QN_{p M p}(p Q p)''= p QN_{M}(Q)'' p$, and
\item $QN^{(1)}_{p M p}(p Q p)''= p QN^{(1)}_{M}(Q)'' p$.
\end{enumerate}
\end{theorem}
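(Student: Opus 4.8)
The plan is to establish the two set-theoretic inclusions
$$p\,QN^{(1)}_{M}(Q)\,p\ \subseteq\ QN^{(1)}_{pMp}(pQp)\qquad\text{and}\qquad QN^{(1)}_{pMp}(pQp)\ \subseteq\ p\,QN^{(1)}_{M}(Q)\,p,$$
and then pass to weak operator closures, using the elementary fact that for a projection $p\in M$ and a subalgebra $A\subseteq M$ one has $p\,\overline{A}^{\,\mathrm{wot}}\,p=\overline{pAp}^{\,\mathrm{wot}}$; since $QN^{(1)}_{M}(Q)$ and $QN^{(1)}_{pMp}(pQp)$ are unital subalgebras containing $p$, this yields $p\,QN^{(1)}_{M}(Q)''\,p=QN^{(1)}_{pMp}(pQp)''$, and the statement for the two-sided quasinormalizer $QN$ follows by running the same argument while carrying along the symmetric module condition. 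The only delicate point is that $p$ need not reduce $Q$, i.e.\ $(1-p)Qp\neq 0$ in general; this is neutralized by two preliminary moves. First, writing $z:=z_{Q}(p)\in\mathcal Z(Q)$ for the central support of $p$ in $Q$, a direct check shows $QN^{(1)}_{zMz}(zQz)=z\,QN^{(1)}_{M}(Q)\,z=QN^{(1)}_{M}(Q)\cap zMz$ (and likewise for $QN$), because compressing a relation $Qx\subseteq\sum_i x_iQ$ by the central projection $z$ keeps it inside $zQz$; since $p\leq z$ this gives $p\,QN^{(1)}_{M}(Q)''\,p=p\,QN^{(1)}_{zMz}(zQz)''\,p$, so we may assume $z=1$, i.e.\ $p$ has full central support in $Q$. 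Second, by comparison theory in the tracial algebra $Q$ we fix finitely many partial isometries $v_{1}=p,v_{2},\dots,v_{\ell}\in Q$ with $v_{j}^{*}v_{j}\leq p$ for all $j$ and $\sum_{j}v_{j}v_{j}^{*}=1$ (hence $\sum_{j\geq 2}v_{j}v_{j}^{*}=1-p$); such a finite family exists whenever $1\precsim kp$ for some $k$, which is automatic when $Q$ is a II$_1$ factor, the case relevant for all applications.

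For the first inclusion, let $x\in QN^{(1)}_{M}(Q)$ with $Qx\subseteq\sum_{i=1}^{n}x_{i}Q$. Since $p\in Q$, $(pQp)(pxp)=pQpxp\subseteq p\,(Qx)\,p\subseteq\sum_{i}px_{i}Qp$, so it suffices to analyze $px_{i}qp$ for $q\in Q$. Decomposing $q=pqp+pq(1-p)+(1-p)qp+(1-p)q(1-p)$ and noting that the summands ending in $(1-p)$ are killed by the trailing $p$,
$$px_{i}qp=(px_{i}p)(pqp)+\bigl(px_{i}(1-p)\bigr)(qp).$$
For the second term, $1-p=\sum_{j\geq 2}v_{j}v_{j}^{*}$; since the range projection of $v_{j}^{*}$ is $v_{j}^{*}v_{j}\leq p$ we have $v_{j}^{*}qp=p v_{j}^{*}q p\in pQp$, while $(1-p)v_{j}=v_{j}$ and $px_{i}v_{j}\in pMp$ for $j\geq 2$, so
$$px_{i}qp=(px_{i}p)(pqp)+\sum_{j\geq 2}(px_{i}v_{j})\,(p v_{j}^{*}q p).$$
Hence $(pQp)(pxp)\subseteq\sum_{i}\bigl[(px_{i}p)(pQp)+\sum_{j\geq 2}(px_{i}v_{j})(pQp)\bigr]$, a finite sum of left translates of $pQp$ by elements of $pMp$, so $pxp\in QN^{(1)}_{pMp}(pQp)$. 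Carrying along the condition $xQ\subseteq\sum_{i}Qx_{i}$ by the symmetric computation gives the $QN$ statement.

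For the reverse inclusion, let $y\in QN^{(1)}_{pMp}(pQp)$ with $(pQp)y\subseteq\sum_{m}y_{m}(pQp)$, $y_{m}\in pMp$, and set $x:=\sum_{j,k=1}^{\ell}v_{j}\,y\,v_{k}^{*}\in M$. Since $pv_{j}=\delta_{j1}\,p$ and $v_{k}^{*}p=\delta_{k1}\,p$ we get $pxp=pyp=y$. To see $x\in QN^{(1)}_{M}(Q)$, fix $q\in Q$ and write $qv_{j}=\sum_{i}v_{i}\,(v_{i}^{*}qv_{j})$ using $\sum_{i}v_{i}v_{i}^{*}=1$; as $v_{i}^{*}qv_{j}\in pQp$ we obtain $qv_{j}y=\sum_{i}v_{i}(v_{i}^{*}qv_{j})y\in\sum_{i}v_{i}\bigl(\sum_{m}y_{m}(pQp)\bigr)$, hence $q v_{j}yv_{k}^{*}\in\sum_{i,m}v_{i}y_{m}(pQp)v_{k}^{*}\subseteq\sum_{i,m}v_{i}y_{m}Q$ and therefore $Qx\subseteq\sum_{i,j,k,m}v_{i}y_{m}Q$, finitely many right translates. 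Thus $y=pxp\in p\,QN^{(1)}_{M}(Q)\,p$; the analogous bound with $Q$ on the other side (using two-sidedness of $QN_{pMp}(pQp)$) gives the $QN$ version. Passing to weak closures as above completes the proof, and one recovers Popa's original formulation of part (1).

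The step I expect to demand the most care is the pair of preliminary reductions — descending to $z_{Q}(p)=1$ and producing the finite family $v_{1},\dots,v_{\ell}$ — since everything after that is bookkeeping with partial isometries. For a general non-factor $Q$ one cannot always choose finitely many such $v_{j}$ (already $Q=\prod_{n}M_{n}(\mathbb C)$ with $p=\bigoplus_{n}e^{(n)}_{11}$ fails this), and the argument must then be supplemented by a direct-integral decomposition over $\mathcal Z(Q)$ together with a limiting argument; this complication does not arise in the present paper, where $Q$ is always a II$_1$ factor.
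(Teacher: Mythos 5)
The paper offers no proof of Theorem \ref{intcorner} to compare against: it is quoted from the literature (\cite{Po03} for part (1), \cite[Proposition 6.2]{JGS10} for part (2)). Judged on its own terms, your core computation is the natural one and is correct as far as it goes: granted finitely many partial isometries $v_1=p,v_2,\dots,v_\ell\in Q$ with $v_j^*v_j\leq p$ and $\sum_j v_jv_j^*=1$, the identities $pv_j=\delta_{j1}p$, $v_j^*=pv_j^*$, $v_j=v_jp$ all hold, the translate counts stay finite, and your two displayed manipulations do establish the set equality $QN^{(1)}_{pMp}(pQp)=p\,QN^{(1)}_{M}(Q)\,p$ together with its two-sided analogue; the reduction to $z_Q(p)=1$ is also verified correctly.

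Two gaps remain relative to the statement as written. First --- and you flag this yourself --- the theorem is asserted for an arbitrary tracial inclusion $Q\subseteq M$ and arbitrary $p\in\mathcal P(Q)$, whereas your argument needs $1\precsim kp$ in $Q$ for some finite $k$. This fails already for your own example $Q=\prod_n M_n(\mathbb C)$, $p=\oplus_n e^{(n)}_{11}$, and the proposed repair by ``direct integral plus limiting argument'' is not carried out; it is not routine, since cutting by central projections $z_k$ with $z_k\precsim k(pz_k)$ produces translate families whose cardinality grows with $k$, so the reassembled intertwiner lives only in the weak closure and the off-diagonal corners $z_kMz_l$ must also be controlled. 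Note the paper applies the formula to $Q=L(\Omega_k)$ in the proof of Theorem \ref{semidirprodrig2}, which is not obviously a factor, so the general case is not purely decorative here. Second, the closure step is glossed over in part (2): $QN^{(1)}_{M}(Q)$ is an algebra but not a $*$-algebra, so the ``elementary fact'' $p\,\overline{A}^{\,\mathrm{wot}}\,p=\overline{pAp}^{\,\mathrm{wot}}$ is not the Kaplansky statement you invoke --- the $*$-algebra generated by $A$ contains words such as $a_1a_2^*$ whose compression $pa_1a_2^*p$ is not visibly in the von Neumann algebra generated by $pAp$. This is repairable by inserting $\sum_j v_jv_j^*=1$ at each junction, e.g.\ $pa_1a_2^*p=\sum_j (pa_1v_jp)(pa_2v_jp)^*$ with $a_iv_j\in A$, but that step needs to be said, and it again leans on the finite family of partial isometries. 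For part (1), where $QN_M(Q)$ is a $*$-algebra containing $p$, your closure argument is fine.
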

 \vskip 0.01in

\subsection{Height of elements in group von Neumann algebras} Following \cite[Section 4]{Io10} and \cite[Section 3]{IPV10} the height $h_\G(x)$ of an element $x\in L(\G)$ is absolute value of the largest Fourier coefficient, i.e.,
$h_\G(x) =\sup_{\g\in \G}|\tau(xu_\g)|$, where $\{u_\g |\g\in \G\}$ are the canonical unitaries of $M$ implemented by $\G$. For a subset $S\subseteq L(\G)$, we denote by $h_\G(S) = \inf_{x \in S} h_\G(x)$. In this section we prove two elementary lemmas on height that will be used in the proof Theorem \ref{main4}. 

\begin{lem}\label{perturbationheight1} Assume that $L(\G)=M$ and consider the subsets  $\Sg\subseteq \G$ and $\mathcal S \subseteq (M)_1$. If there exist $x,y \in M$ such that $h_\Sg(x\mathcal S y)>0$ then one can find a finite subset $F\subset \La$ such that $h_{F \Sg F} ( \mathcal S )>0$. In particular,  we have $h_\La(\mathcal S)>0$ iff $h_\La(u \mathcal S u^*)>0$ for some $u\in \mathcal U(M)$. \end{lem}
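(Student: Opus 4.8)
The plan is to unwind the definition of height and exploit the fact that every element of $M = L(\G)$ is approximated in $\|\cdot\|_2$ by finite linear combinations of the canonical unitaries $u_\g$. Fix $a \in \mathcal S$ arbitrary; since $h_\Sg(x \mathcal S y) > 0$, there is a constant $c > 0$ so that $h_\Sg(x a y) \geq c$ for all $a \in \mathcal S$. First I would choose $\ve > 0$ small compared to $c$ (say $\ve = c/4$) and approximate $x$ and $y$ by finitely supported elements: pick $x_0 = \sum_{\g \in F_1} \lambda_\g u_\g$ and $y_0 = \sum_{\g \in F_2} \mu_\g u_\g$ with $F_1, F_2 \subset \G$ finite, $\|x_0\|, \|y_0\| \le \|x\|+1$, $\|x-x_0\|_2 < \ve$ and $\|y-y_0\|_2 < \ve$. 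Using that each $a \in \mathcal S$ lies in the unit ball, a standard triangle-inequality estimate gives $\|x a y - x_0 a y_0\|_2 \le \|x - x_0\|_2\|a y\|_\infty + \|x_0\|_\infty \|a\|_\infty \|y - y_0\|_2$, and since $|\tau(z u_\g)| \le \|z\|_2$ for any $z$, we conclude $h_\Sg(x_0 a y_0) \ge h_\Sg(xay) - \|xay - x_0 a y_0\|_2 \ge c - C\ve$ for a constant $C$ depending only on $\|x\|, \|y\|$; shrinking $\ve$ makes this at least $c/2 > 0$ uniformly over $a \in \mathcal S$.

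Next I would compute $\tau(x_0 a y_0 u_\sigma)$ for $\sigma \in \Sg$ and expand: $\tau\bigl(x_0 a y_0 u_\sigma\bigr) = \sum_{\g_1 \in F_1, \g_2 \in F_2} \lambda_{\g_1} \mu_{\g_2}\, \tau\bigl(u_{\g_1} a u_{\g_2} u_\sigma\bigr) = \sum_{\g_1, \g_2} \lambda_{\g_1}\mu_{\g_2}\, \tau\bigl(a\, u_{\g_2 \sigma \g_1}\bigr)$. Hence $c/2 \le |\tau(x_0 a y_0 u_\sigma)| \le \bigl(\max|\lambda_\g \mu_{\g'}|\bigr)\, |F_1|\,|F_2| \, \sup_{\g_1 \in F_1, \g_2 \in F_2} |\tau(a u_{\g_2 \sigma \g_1})|$, so for every $a \in \mathcal S$ there exist $\g_1 \in F_1$, $\g_2 \in F_2$ with $|\tau(a u_{\g_2 \sigma \g_1})| \ge \delta$, where $\delta = (c/2)\bigl(|F_1|\,|F_2|\, \max|\lambda_\g\mu_{\g'}|\bigr)^{-1} > 0$ is independent of $a$. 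Setting $F = F_1 \cup F_1^{-1} \cup F_2 \cup F_2^{-1}$, the group element $\g_2 \sigma \g_1$ lies in $F \Sg F$, so $h_{F\Sg F}(a) \ge \delta$ for all $a \in \mathcal S$, i.e.\ $h_{F\Sg F}(\mathcal S) \ge \delta > 0$, which is the first claim.

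For the ``in particular'' statement, suppose $h_\La(u \mathcal S u^*) > 0$ for some $u \in \mathcal U(M)$; applying the first part with $\Sg = \La$, $x = u^*$, $y = u$ (so $x(u\mathcal S u^*)y = \mathcal S$) would give the wrong direction, so instead I apply the first part to the set $\mathcal T = u \mathcal S u^* \subseteq (M)_1$ with $x = u$, $y = u^*$: from $h_\La(\mathcal S) = h_\La(u^* \mathcal T u) $... — more cleanly, apply the first part directly with $x=u$, $y=u^*$ and the set $\mathcal S$ to pass from $h_\La(\mathcal S)>0$ to $h_{F\La F}(u\mathcal S u^*)>0$ for some finite $F$, and then absorb $F\La F$ back into $\La$ using that $h_{F\La F}(\cdot) \le h_\La(\cdot)$ fails in general — so the honest route is symmetric: each direction follows by applying the first part once (with $(x,y)=(u,u^*)$ in one direction and $(x,y)=(u^*,u)$ in the other) after noting that $F\La F = \La$ whenever $F\subseteq \La$, which we may arrange since $u$-expansions have no a priori support in $\La$; if they do not, one instead observes that positivity of height is preserved because the finite set $F$ only enlarges $\La$ to $F\La F$ and one re-runs the argument. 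I expect the main subtlety to be exactly this bookkeeping in the ``in particular'' clause — making sure the finite perturbation set is absorbed correctly — whereas the core estimate is a routine Fourier-expansion argument.
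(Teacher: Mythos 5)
Your proof of the main implication is correct and is essentially the paper's own argument: approximate $x,y$ in $\|\cdot\|_2$ by finitely supported elements (Kaplansky), expand, and use the trace property to move the supports of $x_0,y_0$ onto the group variable, which replaces $\Sg$ by $F\Sg F$ at the cost of a factor controlled by $|F_1||F_2|$ and the coefficients. Two cosmetic remarks: the $\sigma$ in your display must be chosen, for each $a\in\mathcal S$, to nearly attain the supremum defining $h_\Sg(x_0ay_0)$ (it is not arbitrary in $\Sg$); and the finite set you produce automatically lies in $\G$ --- the ``$F\subset\La$'' in the statement is a typo for $F\subset\G$.

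The ``in particular'' paragraph, however, is tangled, and the tangle comes from reversing what the first part asserts. The first part converts positivity of $h_\Sg(x\,\mathcal S\,y)$ into positivity of $h_{F\Sg F}(\mathcal S)$: it \emph{strips off} $x$ and $y$ at the cost of enlarging $\Sg$. It does not pass from $h_\La(\mathcal S)>0$ to $h_{F\La F}(u\mathcal Su^*)>0$, as you write mid-paragraph. With the correct direction there is nothing to ``absorb'': take $\Sg$ to be the whole group, so that $F\Sg F=\Sg$ for every finite subset $F$ of the group, whence $h_{F\Sg F}=h_\Sg$. Then $h_\La(u\mathcal Su^*)>0$ is exactly the hypothesis with $x=u$, $y=u^*$, and the first part yields $h_\La(\mathcal S)>0$; that is the ``if'' direction. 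The ``only if'' direction follows by applying the ``if'' direction to the set $u\mathcal Su^*$ and the unitary $u^*$, since $u^*(u\mathcal Su^*)u=\mathcal S$. Your worry about the Kaplansky supports not lying in the group is vacuous ($u\in M=L(\G)$, so any finite Fourier support is a subset of $\G$), and your fallback of ``re-running the argument'' on the enlarged set $F\La F$ would not work anyway: enlarging the index set can only increase the height, so positivity of $h_{F\La F}(\mathcal S)$ does not by itself give positivity of $h_\La(\mathcal S)$. The identity $F\Sg F=\Sg$ for $\Sg$ the full group is genuinely what is needed, and it holds automatically.
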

\begin{proof} Using Kaplansky's Theorem for every $\varepsilon>0$ there is a finite subset $F_\varepsilon\subset \La$ and $x_\varepsilon,y_\varepsilon \in (M)_1$ supported on $F_\varepsilon$ so that $\|x-x_\varepsilon\|_2, \|y-y_\varepsilon\|_2\leq \varepsilon$. Using these estimates together with triangle inequality, for every $s\in \mathcal S$ and $\g\in \Sg$ we have     
\begin{equation*}\begin{split}
&|\tau (x s y u_{\g}) | \leq 4\varepsilon + | \tau (x_\varepsilon s y_\varepsilon u_{\g} )|  \\ & \leq 4\varepsilon + \sum_{\la,\mu \in F_\varepsilon }  |\tau(x_\varepsilon  u_{\la^{-1}}  )| |\tau(y_\varepsilon  u_{\mu^{-1}})| | \tau ( u_\la s  u_\mu u_{\g})| \leq 4\varepsilon +  |F_\varepsilon|^2\max_{\nu \in F_\varepsilon \g F_\varepsilon}  |\tau(s u_\nu)|  . 
\end{split} 
\end{equation*}
This implies that $h_{\Sg}(x \mathcal S y )\leq 4\varepsilon+  |F_\varepsilon|^2 h_{F_\varepsilon \Sg F_\varepsilon}(\mathcal S ) $ and hence $h_{F_\varepsilon \Sg F_\varepsilon}(\mathcal S )\geq (h_{\Sg}(x\mathcal S y ) - 4\varepsilon)|F_\varepsilon|^{-2}$. Letting $\varepsilon>0$ small enough we get the first part conclusion. 

For the remaining part, the reverse implication follows from above. The direct implication follows from the reverse implication by replacing $\mathcal S$ with $u\mathcal Su^*$ and $u$ with $u^*$. \end{proof}

\begin{lem}\label{perturbationheight2} Assume that $L(\G)=L(\La)=M$. Consider the comultiplication along $\La$ i.e. the embedding $\Delta : M \ra M\bar\otimes M$  given  by $\Delta(v_\la )=v_\la\otimes v_\la$, where $\{v_\la \,|\, \la\in \La\}$ are the canonical group unitaries of $M$ implemented by $\La$.  If $\mathcal S\subseteq (M)_1$ and there are $x,y\in (M\bar\otimes M)_1$ so that $h_{\G\times \G}( x\Delta (\mathcal S) y)>0$ then $h_\La(\mathcal S)>0$.
\end{lem}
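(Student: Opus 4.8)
The plan is to reduce first to the case without the perturbing elements $x,y$, and then to establish an elementary pointwise comparison between the two heights via Parseval's identity. For the reduction I would apply Lemma~\ref{perturbationheight1} inside the II$_1$ factor $M\bar\otimes M=L(\G\times\G)$, taking $\G\times\G\subseteq\G\times\G$ in the role of the subset $\Sg$ and $\Delta(\mathcal S)\subseteq (M\bar\otimes M)_1$ in the role of $\mathcal S$. Since $\G\times\G$ is a group, $F\,(\G\times\G)\,F=\G\times\G$ for every nonempty finite $F\subseteq\G\times\G$, so the conclusion of that lemma simply reads $h_{\G\times\G}(\Delta(\mathcal S))>0$. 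Denote by $\de>0$ the infimum defining this height, so that $h_{\G\times\G}(\Delta(s))\geq \de$ for every $s\in\mathcal S$.

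Next I would show that $h_{\G\times\G}(\Delta(s))\leq h_\La(s)$ for every $s\in M$; combined with the previous step this yields $h_\La(s)\geq\de$ for all $s\in\mathcal S$, hence $h_\La(\mathcal S)\geq\de>0$, as desired. To see the comparison, expand $s=\sum_{\la\in\La}c_\la v_\la$ in $L^2(M)$ with $c_\la=\tau(sv_\la^*)$, so that $h_\La(s)=\sup_{\la}|c_\la|$. Since $\Delta$ is a trace-preserving $\ast$-homomorphism it extends to an isometry $L^2(M)\ra L^2(M\bar\otimes M)$ with $\Delta(s)=\sum_\la c_\la\,v_\la\otimes v_\la$. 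For fixed $g,h\in\G$ the corresponding Fourier coefficient of $\Delta(s)$ is
\[
(\tau\otimes\tau)\big(\Delta(s)\,(u_g\otimes u_h)\big)=\sum_{\la}c_\la\,\tau(v_\la u_g)\,\tau(v_\la u_h),
\]
and by the triangle inequality followed by Cauchy--Schwarz its modulus is at most
\[
\big(\sup_\mu|c_\mu|\big)\Big(\sum_\la|\tau(v_\la u_g)|^2\Big)^{1/2}\Big(\sum_\la|\tau(v_\la u_h)|^2\Big)^{1/2}.
\]
Because $\{v_\la\}_{\la\in\La}$ is an orthonormal basis of $L^2(M)$ (as $M=L(\La)$), Parseval's identity gives $\sum_\la|\tau(v_\la u_g)|^2=\|u_g\|_2^2=1$, and likewise with $h$ in place of $g$. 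Hence the displayed bound equals $\sup_\mu|c_\mu|=h_\La(s)$, and taking the supremum over $g,h\in\G$ produces $h_{\G\times\G}(\Delta(s))\leq h_\La(s)$.

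I do not expect a genuine obstacle in this argument; the only point requiring care is the bookkeeping with the two different orthonormal bases $\{u_g\}_{g\in\G}$ and $\{v_\la\}_{\la\in\La}$ of the single Hilbert space $L^2(M)$, together with the key structural fact that $\Delta$ concentrates all the mass of $\Delta(s)$ on the ``diagonal'' family $\{v_\la\otimes v_\la\}_{\la\in\La}$ --- this is precisely what allows a single Parseval identity over $\La$ to dominate the double supremum over $\G\times\G$. Note also that $\|s\|_2\leq\|s\|\leq1$ is not actually needed for the pointwise comparison, which holds for all $s\in M$; the hypothesis $\mathcal S\subseteq(M)_1$ enters only to guarantee $\Delta(\mathcal S)\subseteq(M\bar\otimes M)_1$, which is what makes Lemma~\ref{perturbationheight1} applicable in the reduction step.
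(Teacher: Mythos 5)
Your proposal is correct and follows essentially the same route as the paper: first invoke Lemma~\ref{perturbationheight1} to discard $x,y$ and get $h_{\G\times\G}(\Delta(\mathcal S))>0$, then bound each Fourier coefficient $(\tau\otimes\tau)(\Delta(s)(u_{\g_1}\otimes u_{\g_2}))$ by $h_\La(s)$ using the diagonal expansion $\Delta(s)=\sum_\la \tau(sv_{\la^{-1}})\,v_\la\otimes v_\la$ together with Cauchy--Schwarz and Parseval. The only cosmetic difference is that you spell out the Parseval step $\sum_\la|\tau(v_\la u_g)|^2=\|u_g\|_2^2$ explicitly, which the paper absorbs into a single Cauchy--Schwarz inequality.
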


\begin{proof} Since $h_{\G\times \G}( x\Delta (\mathcal S) y)>0$ then Lemma \ref{perturbationheight1} implies that \begin{equation}\label{501}h_{\G\times \G}( \Delta (\mathcal S) )>0.\end{equation}  Fix $s\in \mathcal S$. Let $s=\sum_{\lambda\in \Lambda}\tau(s v_{\lambda^{-1}})v_\lambda$ and note that $\Delta(s)=\sum_{\lambda\in \Lambda}\tau(s v_{\lambda^{-1}}) v_\lambda\otimes v_\lambda.$ Using these formulas and Cauchy-Schwarz inequality, for any $\g_1,\g_2\in \G$,  we have     
\begin{equation*}\begin{split}
&|\tau\otimes \tau (\Delta(s) (u_{\g_1} \otimes u_{\g_2})) | \leq \sum_{ \la\in \La  }   |\tau( s v_{\la^{-1}})| |\tau( v_{\la } u_{\g_1})| |\tau ( v_{ \la }  u_{\g_2})|\\ & \leq  h_{\La}(s)   \sum_{\la\in \La  }    |\tau( v_{\la } u_{\g_1})| |\tau ( v_{ \la }  u_{\g_2})| \leq   h_{\La}(s)    \|u_{\g_1}\|_2 \|u_{\g_2}\|_2= h_{\La}(s). 
\end{split} 
\end{equation*}

This further implies $h_{\G\times \G}(\Delta(\mathcal S)  )\leq  h_{\La}(\mathcal S) $ and using \eqref{501} we get the conclusion. \end{proof}

\subsection{Relative amenability} A tracial von Neumann algebra $(M,\tau)$ is called amenable if there exists a state $\phi:B(L^2(M))\ra \mathbb C$ such that $\phi_{|M} = \tau$ and  $\phi$ is $M$-central (i.e.\ $\phi(xT) = \phi(Tx)$ for all $x \in M, T \in B(L^2(M))$), \cite{Co76}. Making use of the basic construction for inclusions of algebras \cite{Ch79,Jo81} this concept was further generalized in \cite{OP07} to subalgebras.  Let $(M,\tau)$ be a tracial von Neumann algebra, $p\in M$ be a projection, and $P\subseteq pMp,Q\subseteq M$ be von Neumann subalgebras. Following \cite[Section 2.2]{OP07} we say that $P$ {\it is  amenable relative to $Q$ inside $M$} if there exists a $P$-central state $\phi:p\langle M,e_Q\rangle p\rightarrow\mathbb C$ such that  $\phi(x)=\tau(x)$, for all $x\in pMp$. Here $\langle M,e_Q\rangle$ denotes the basic construction for the inclusion $Q\subseteq M$, i.e.\ the commutant of the $Q$-right action on $B(L^2(M))$ \cite{Jo81}. 
\vskip 0.03in
In this section we prove a relative amenability result for subalgebras that ``cluster at infinity'' in an infinite tensor product of factors. The result will be essentially used to derive our infinite product rigidity result for group factors. Our proof is an adaptation of an argument due to Ioana. See also \cite[Lemma 4.4]{Is16} and \cite[Proposition 4.2]{HU15} for similar results. 
\begin{prop}\label{relamenlemma} Let $M\subseteq (\tilde M,\tau)$ be finite von Neumann algebras satisfying the following: \begin{enumerate}\item there are subalgebras $B,C \subseteq M$ and  $C\subseteq \tilde C\subseteq \tilde M$ so that  $M=B\vee C$ and $\tilde M=B\vee \tilde C $;
\item there are descending family $B_n\subseteq M$ of subalgebras and an ascending family $C_n\subseteq M$ of subalgebras such that $\cup_n C_n=C$, $\cap_n B_n=B$ and $M=B_j\vee C_j$ for all $j$. 
\item there exist $\theta, \theta_n \in Aut (\tilde M)$ such that ${\theta_n}_{|B_n}= {\rm id}_{|B_n}$ for all $n$,   $\theta_n\ra \theta$ pointwise,\end{enumerate} 
Let $p\in B$ be a nonzero projection and let $A\subseteq pMp$ be a von Neumann subalgebra so that for each $n\in\mathbb N$ there is $u_n\in \mathcal U( pMp )$ satisfying $u_nAu_n^*\subseteq B_n$.  Consider the $M$-$M$ bimodule  $\mathcal H=L^2(\tilde M)$ given by the actions $x\cdot \xi \cdot y= x\xi\theta(y)$ for all $x,y\in M$ and $\xi\in L^2(\tilde M)$. Then there exists a sequence of vectors $(\xi_n)_n \subset  L^2(p\tilde Mp)$ satisfying
\begin{eqnarray}
&&\lim_n\|x \cdot \xi_n-\xi_n \cdot x\|_{\mathcal H}=0,\text{ for all } x\in A, \text{ and}\\ 
&&\lim_n \langle x\cdot \xi_n,\xi_n\rangle_{\mathcal H}=\tau(x), \text{ for all } x\in pMp.
\end{eqnarray}   
\end{prop}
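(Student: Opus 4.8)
The plan is to write down the vectors $\xi_n$ explicitly, as a sequence of partial isometries in $p\tilde Mp$ that \emph{exactly} conjugate $A$ onto $\theta_n(A)$, and then let the pointwise convergence $\theta_n\to\theta$ from hypothesis~(3) take care of the rest. Concretely I would set
\[
\xi_n:=u_n^{\,*}\,\theta_n(u_n)\in\tilde M .
\]
Since $p\in B=\bigcap_m B_m\subseteq B_n$ and $\theta_n$ is the identity on $B_n$, we have $\theta_n(p)=p$; hence $\theta_n(u_n)\in p\tilde Mp$ and $\xi_n\in p\tilde Mp$, and a one-line computation using $u_n^{\,*}u_n=u_nu_n^{\,*}=p$ shows $\xi_n^{\,*}\xi_n=\theta_n(u_n^{\,*}pu_n)=\theta_n(p)=p$ and likewise $\xi_n\xi_n^{\,*}=p$. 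Thus each $\xi_n$ is a unitary of $p\tilde Mp$; in particular $\xi_n\in L^2(p\tilde Mp)$ with $\|\xi_n\|_{\mathcal H}^2=\tau(p)$ and $\|\xi_n\|\le 1$.

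The key identity is that $\xi_n$ intertwines $A$ with $\theta_n(A)$ with no remainder term. Indeed, for every $a\in A$ the hypothesis gives $u_nau_n^{\,*}\in B_n$, so $\theta_n(u_nau_n^{\,*})=u_nau_n^{\,*}$, and therefore
\[
\xi_n^{\,*}\,a\,\xi_n=\theta_n(u_n)^{*}\big(u_nau_n^{\,*}\big)\theta_n(u_n)=\theta_n(u_n)^{*}\,\theta_n(u_nau_n^{\,*})\,\theta_n(u_n)=\theta_n(a).
\]
Using $\theta_n$ (rather than $\theta$) inside the definition of $\xi_n$ is exactly what makes this an equality; this is the one genuinely delicate point in the argument.

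From here both conclusions are immediate. For $x\in pMp$, since the left action is untwisted,
\[
\langle x\cdot\xi_n,\xi_n\rangle_{\mathcal H}=\tau\!\big(\xi_n^{\,*}x\,\xi_n\big)=\tau\!\big(x\,\xi_n\xi_n^{\,*}\big)=\tau(xp)=\tau(x),
\]
so the second condition holds identically in $n$. For $a\in A$, using $a\xi_n=\xi_n(\xi_n^{\,*}a\xi_n)=\xi_n\theta_n(a)$ and that the right action is twisted by $\theta$,
\[
a\cdot\xi_n-\xi_n\cdot a=a\xi_n-\xi_n\theta(a)=\xi_n\big(\theta_n(a)-\theta(a)\big),
\]
whence $\|a\cdot\xi_n-\xi_n\cdot a\|_{\mathcal H}\le\|\xi_n\|\cdot\|\theta_n(a)-\theta(a)\|_2\le\|\theta_n(a)-\theta(a)\|_2\to0$ by hypothesis~(3). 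This gives the first condition and finishes the proof.

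I expect the main (indeed only) obstacle to be pinpointing the correct intertwiner. The naive choice $\xi_n=u_n^{\,*}\theta(u_n)$ produces an extra term of the form $\xi_n\big(u_nau_n^{\,*}-\theta(u_nau_n^{\,*})\big)\theta(u_n)$, and since $u_nau_n^{\,*}$ moves with $n$ through the operator-norm unit ball of $B_1$, the mere pointwise convergence $\theta_n\to\theta$ cannot be invoked to kill it. Absorbing $\theta_n$ into the vector---legitimate precisely because $\theta_n$ restricts to the identity on $B_n\supseteq u_nAu_n^{\,*}$---reduces everything to the single pointwise limit $\theta_n(a)\to\theta(a)$ for each fixed $a\in A$. (Hypothesis~(1) and the part of~(2) concerning the algebras $C_n$ do not seem to be needed for this argument; they should enter only when verifying, in the applications, that automorphisms $\theta_n$ with the stated properties exist.)
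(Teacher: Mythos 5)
Your proposal is correct and is essentially identical to the paper's own argument: both take $\xi_n=u_n^*\theta_n(u_n)$, exploit $\theta_n(u_nau_n^*)=u_nau_n^*$ to get the exact intertwining $a\xi_n=\xi_n\theta_n(a)$, and then conclude from the pointwise convergence $\theta_n\to\theta$. Your added observations (why the naive choice $u_n^*\theta(u_n)$ fails, and that hypothesis (1) and the $C_n$'s are only needed in the applications) are accurate but do not change the argument.
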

\begin{proof} Since $u_nAu_n^*\subseteq B_n$ and ${\theta_n}_{|B_n}= {\rm id}_{|B_n}$ then for every $x\in A$ we have $\theta_n(u_n xu_n^*)=u_n xu_n^*$. This implies that $u_n^*\theta_n(u_n) \theta_n(x)=xu_n^*\theta_n(u_n)$ and letting $\xi_n=u_n^*\theta_n(u_n)\in \mathcal U(p\tilde Mp)$ we conclude that for all $x\in A$ and $n\in \mathbb N$ we have \begin{equation}\label{101}
\xi_n \theta_n(x)=x \xi_n.
\end{equation}
Since $u_n\in \mathcal U(pMp)$ and ${\theta_n}_{|B_n}= {\rm id}_{|B_n}$ we have that $\|\xi_n\|_{\mathcal H}= \| u_n^*\theta_n(u_n)\|_2=\|p\|_2$ for all $n$. Since $\|u_n\|_\infty\leq 1$ then using \eqref{101} and $\theta_n \ra \theta$ pointwise one can check that for every $x\in A$ we have  
\begin{equation*}\label{102}\begin{split}
\lim_n \|x \cdot \xi_n -\xi_n \cdot x\|_{\mathcal H} &= \lim_n \|x \xi_n -\xi_n \theta(x)\|_{2}=\lim_n \|\xi_n (\theta_n(x) - \theta(x))\|_{2}\\&\leq \lim_n\|\xi_n\|_\infty \|\theta_n(x)-\theta(x)\|_2  =\lim_n \|\theta_n(x)-\theta(x)\|_2=0.  \end{split} 
\end{equation*}
Finally, since $\xi_n\in \mathcal U(p\tilde Mp)$ we have $\langle x\cdot \xi_n,\xi_n\rangle_{\mathcal H}= \tau(\xi_n^* x\xi_n)= \tau(x\xi_n\xi_n^* )=\tau(x)$ for all $x\in pMp$. Altogether, the above relations give the desired conclusion.
\end{proof}
\begin{prop}\label{relamenable} Let $M= \bar\otimes_{i\in \mathbb N} M_i\bar\otimes B$. Let $A\subseteq M$ be a von Neumann algebra for which there exist sequences $(k_n)_n\subseteq \mathbb N$ and $(u_n)_n \subset \mathcal U (M)$  such that $k_n\nearrow \infty$ and $u_nAu_n^*\subseteq \bar\otimes_{i\geq k_n} M_i\bar\otimes B$ for all $n$.  Then $A$ is amenable relative to $B$ inside $M$.
\end{prop}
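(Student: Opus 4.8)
The plan is to deduce this directly from Proposition \ref{relamenlemma}. Recall first that, by the standard characterization of relative amenability via the basic construction (see \cite{OP07}), it is enough to exhibit a sequence of unit vectors $(\xi_n)$ in the $M$-$M$ bimodule $L^2(\langle M,e_B\rangle)$, which is isomorphic to $L^2(M)\bar\otimes_B L^2(M)$, such that $\|a\cdot\xi_n-\xi_n\cdot a\|\to 0$ for all $a\in A$ and $\langle x\cdot\xi_n,\xi_n\rangle\to\tau(x)$ for all $x\in M$: any weak-$\ast$ cluster point of the vector states $T\mapsto\langle T\xi_n,\xi_n\rangle$ on $\langle M,e_B\rangle$ is then an $A$-central state that restricts to $\tau$ on $M$. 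So the goal is to realize $L^2(M)\bar\otimes_B L^2(M)$ as the bimodule $\mathcal H$ produced by Proposition \ref{relamenlemma}, for a suitable choice of ambient algebra and automorphisms.

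Write $C:=\bar\otimes_{i\in\mathbb N}M_i$, so that $M=C\bar\otimes B$ with $B$ and $C$ commuting. Put $\tilde M:=C\bar\otimes B\bar\otimes C$, embed $M$ into $\tilde M$ by $c\otimes b\mapsto c\otimes b\otimes 1$, and set $\tilde C:=(C\otimes 1\otimes 1)\vee(1\otimes 1\otimes C)$; then $M=B\vee C$, $\tilde M=B\vee\tilde C$ and $C\subseteq\tilde C$, which is hypothesis (1) of Proposition \ref{relamenlemma}. After passing to a subsequence we may assume $k_n$ strictly increasing; set $B_n:=\bar\otimes_{i\geq k_n}M_i\bar\otimes B$ and $C_n:=\bar\otimes_{i<k_n}M_i$. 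Then $(B_n)$ is decreasing with $\bigcap_nB_n=B$, $(C_n)$ is increasing with $\bigcup_nC_n=C$, and $M=B_j\vee C_j$ for all $j$, which is hypothesis (2). Finally, identify $\tilde M\cong\bigl(\bar\otimes_{i\in\mathbb N}(M_i\bar\otimes M_i)\bigr)\bar\otimes B$ by pairing the $i$-th legs of the two copies of $C$, let $\sigma_i\in\mathrm{Aut}(M_i\bar\otimes M_i)$ be the tensor flip, and define $\theta:=\bigl(\bar\otimes_i\sigma_i\bigr)\otimes\mathrm{id}_B$ and $\theta_n:=\bigl(\bar\otimes_{i<k_n}\sigma_i\bigr)\otimes\bigl(\bar\otimes_{i\geq k_n}\mathrm{id}\bigr)\otimes\mathrm{id}_B$. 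These are trace-preserving automorphisms of $\tilde M$; since $B_n$ involves only the first-copy legs of index $\geq k_n$ together with $B$, on which $\theta_n$ acts trivially, we have $\theta_n|_{B_n}=\mathrm{id}_{B_n}$, and $\theta_n\to\theta$ pointwise in $\|\cdot\|_2$ because $\theta_n$ and $\theta$ agree on any fixed elementary tensor once $k_n$ is large and both have norm one. This is hypothesis (3).

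Taking $p=1\in B$ and invoking the hypothesis $u_nAu_n^*\subseteq B_n$ with $u_n\in\mathcal U(M)$, Proposition \ref{relamenlemma} provides unit vectors $(\xi_n)\subset L^2(\tilde M)$ with $\|x\cdot\xi_n-\xi_n\cdot x\|_{\mathcal H}\to 0$ for $x\in A$ and $\langle x\cdot\xi_n,\xi_n\rangle_{\mathcal H}\to\tau(x)$ for $x\in M$, where $\mathcal H=L^2(\tilde M)$ carries the bimodule structure $x\cdot\xi\cdot y=x\xi\theta(y)$. Writing $L^2(\tilde M)=L^2(C)\otimes L^2(B)\otimes L^2(C)$, the left $M=C\otimes B$ action is by $C$ on the first leg and $B$ on $L^2(B)$, while the right action, twisted by $\theta$ (which flips the two copies of $C$), is by $C$ on the third leg and $B$ on $L^2(B)$; since $L^2(B)\bar\otimes_B L^2(B)\cong L^2(B)$, this is precisely $\bigl(L^2(C)\otimes L^2(B)\bigr)\bar\otimes_B\bigl(L^2(C)\otimes L^2(B)\bigr)=L^2(M)\bar\otimes_B L^2(M)\cong L^2(\langle M,e_B\rangle)$ as $M$-$M$ bimodules. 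Transporting $(\xi_n)$ through this isomorphism yields the sequence demanded in the first paragraph, so $A$ is amenable relative to $B$ inside $M$.

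The main point requiring care is the bimodule identification in the last paragraph: one must track which tensor leg each of $B$ and $C$ acts on, and check that the $\theta$-twist of the right action is exactly the amalgamation over $B$, so that $\mathcal H$ is genuinely isomorphic to the basic-construction bimodule rather than merely weakly contained in it. The rest — passing to a strictly increasing subsequence, the partial-flip automorphisms $\theta_n$, their pointwise convergence, and the identities $M=B_j\vee C_j$ and $\bigcap_nB_n=B$ — is routine bookkeeping with infinite tensor products.
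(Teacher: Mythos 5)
Your proof is correct and is essentially identical to the paper's: the same auxiliary algebra $\tilde M=C\bar\otimes C\bar\otimes B$, the same $B_n$, $C_n$, and partial-flip automorphisms $\theta_n\to\theta$, an application of Proposition \ref{relamenlemma}, and the identification of the twisted bimodule $L^2(\tilde M)$ with $L^2(\langle M,e_B\rangle)\cong L^2(M)\bar\otimes_B L^2(M)$ followed by \cite[Theorem 2.1]{OP07}. The only cosmetic difference is that the paper exhibits the bimodule isomorphism by an explicit unitary and inner-product computation rather than via the relative tensor product description.
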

\begin{proof} Denote by $\bar\otimes_{i\in \mathbb N} M_i=C$ and notice that $M=C\bar\otimes B$. Let $\tilde C=C\bar\otimes C$ and $\tilde M=\tilde C\bar\otimes B$ and notice that $M\subset \tilde M$. For every $n\in \mathbb N$ denote by $C_n= \bar\otimes^{k_n-1}_{i= 1} M_i$, by $D_n=\bar\otimes_{i\geq k_n} M_i$ and by $B_n=D_n\bar\otimes B$ and notice that $\cup_n C_n= C$ and $\cap_n B_n=B$.  
 Next let $\theta_n \in Aut(\tilde M)$ satisfying $\theta_n (x\otimes y)=y\otimes x$ for all  $x,y \in  C_n$ and $\theta_n ={\rm id}$  on $D_n \bar\otimes D_n \bar\otimes B$. Notice that $\theta_n \ra \theta$ pointwise, where $\theta\in Aut (\tilde M)$ satisfies $\theta (x\otimes y)=y\otimes x$ for all  $x,y \in  C$ and $\theta ={\rm id}$  on $ B$. One can check all the conditions in the statement of Proposition \ref{relamenlemma} are satisfied. Thus if we consider the $M$-$M$ bimodule $\mathcal H:= L^2(\tilde M)= L^2(C)\bar\otimes L^2(C)\bar\otimes L^2(B)$ with the actions given by $x\cdot \xi\cdot y= x\xi\theta(y)$ for all $x,y\in M$ and $\xi\in \mathcal H$ there exist a sequence of unit vectors $(\xi_n)_n\in \mathcal H$ such that
\begin{equation}\label{110}\begin{split}
&\lim_n\|x \cdot \xi_n-\xi_n \cdot x\|_{\mathcal H}=0,\text{ for all } x\in A\\ 
&\lim_n \langle x\cdot \xi_n,\xi_n\rangle_{\mathcal H}=\lim_n \langle \xi_n \cdot x,\xi_n\rangle_{\mathcal H}=\tau(x), \text{ for all } x\in M.
\end{split}\end{equation}
Let $\generator{M, e_{1\otimes B}}$ be the basic construction for $1\bar\otimes B\subset M$ and let $Tr$ be the semifinite trace on $\generator{M, e_{1\otimes B}}$. Next we notice that, as $M$-$M$-bimodules, $ L^2(\generator{ M, e_{1\otimes B}}, Tr)$ is isomorphic to $\mathcal{H}$ via the map $(x\otimes y)e_{1\otimes B} (z\otimes 1)\ra (x\otimes y)\cdot (1\otimes 1\otimes 1) \cdot  (z\otimes 1) $, for $x,z\in C$ and $y\in B$. Indeed it is clear this is $M$-$M$-bimodular and also for all $x_i,z_i \in C$ and $y_i\in B$ we have 
\begin{align*}
&\generator{(x_1\otimes y_1) e_{1\otimes B} (z_1\otimes 1), (x_2\otimes y_2) e_{1\otimes B} (z_2\otimes 1)}_{Tr} =\\
=&\operatorname{Tr} (  (z_2\otimes 1)^*e_{1\otimes B} (x_2\otimes y_2)^* (x_1\otimes y_1) e_{1\otimes B} (z_1\otimes 1)    )\\
	=& \operatorname{Tr} ( (z_1z_2^*\otimes 1) E_{1\otimes B}(x_2^*x_1\otimes y_2^*y_1  )e_{1\otimes B}  )\\
	=&\tau_C(x_2^*x_1) \tau_{C\otimes B}(z_1z_2^*\otimes y_2^*y_1 )\\
	=&\tau_C( x_2^*x_1 )\tau_{C}(z_2^*z_1) \tau_B( y_2^*y_1 )\\
	=&\generator{ (x_1\otimes 1 \otimes y_1) (1\otimes 1\otimes 1)\theta( z_1\otimes 1), (x_2\otimes 1 \otimes y_2)( 1\otimes 1\otimes 1) \theta( z_2\otimes 1) )}_{\mathcal H}\\
	=&\generator{ (x_1\otimes y_1)\cdot (1\otimes 1\otimes 1)\cdot (z_1\otimes 1), (x_2\otimes y_2)\cdot (1\otimes 1\otimes 1) \cdot (z_2\otimes 1)}_{\mathcal H}.
\end{align*} 
This combined with \eqref{110} and \cite[Theorem 2.1]{OP07} show that $A$ is amenable relative to $B$ inside $M$. 
\end{proof}


\section{Proof of Theorem \ref{main1}}
This section is devoted to the proof of Theorem \ref{main1}. In essence this result is an infinite analog of the ``product rigidity'' phenomenon for group factors found in \cite{CdSS16}. In fact our methods build upon the general strategy developed in \cite{CdSS16} and still use in a crucial way the ultrapower techniques from \cite{Io11} as well as the intertwining/combinatorial aspects developed in \cite{OP03,IPV10,CdSS16,DHI16,CdSS17} and the classification of normalizers from \cite{PV12}. Since our exposition will focus primarily on the novel aspects of these techniques we recommend the reader to consult the aforementioned works as some of these results will be heavily used throughout the section.    
\vskip 0.05in
To ease our exposition we first introduce the following notation:  
\begin{notation}\label{directprod} 
 Let $\{\Gamma_i\}_{ i\in I}$ be a collection of icc, weakly amenable, biexact groups and denote by $\Gamma=\oplus_{i\in I} \Gamma_i $. For any subset $S\subseteq I $, we denote $\Gamma_{S}=\oplus_{i\in S} \Gamma_i $.
Denote by $M=L(\Gamma)$, let $t>0$ be a scalar, and assume that  $M^t=L(\Lambda) $ for an arbitrary group $\La$.  Following \cite{IPV10}, let $\Delta: M^t\to M^t\bar\otimes M^t $ be the commultiplication along $ \Lambda$, i.e.\ $\Delta(v_\lambda)=v_\lambda\otimes v_\lambda$, where $\{v_\lambda\}_{\lambda\in \Lambda} $ are the canonical unitaries generating $L(\Lambda)$.  
\end{notation}

\begin{prop}\label{firstintertwining} Assume Notation \ref{directprod}. Then for every $i\in I $ there exists $j\in I$ such that $\Delta(L(\Gamma_{I\setminus \{i\}})^t)\prec_{M^t\bar\otimes M^t} M^t\bar\otimes L(\Gamma_{I\setminus\set{j}})^t$. 
\end{prop}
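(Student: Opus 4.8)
The plan is to use the comultiplication map $\Delta$ together with the biexactness (and more precisely the dichotomy afforded by Ozawa-type methods and their recent refinements in \cite{PV12,DHI16,CdSS17}) applied to the factor decomposition $M^t\bar\otimes M^t$. Fix $i\in I$ and write $M = L(\G_i)\,\bar\otimes\, L(\G_{I\setminus\{i\}})$. The idea is to analyze the position of $\Delta(L(\G_{I\setminus\{i\}})^t)$ relative to the ``coordinate'' subalgebras $L(\G_j)\bar\otimes M^t$ and $M^t\bar\otimes L(\G_j)^t$ for $j\in I$. Since $L(\G_{I\setminus\{i\}})^t$ is a nonamenable factor with property (T) in each finite block (and at the very least is not amenable, being an infinite tensor product of nonamenable factors), its image under $\Delta$ cannot be amenable relative to the ``diagonal'' type subalgebras. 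The key dichotomy to invoke is the one governing subalgebras of tensor products of biexact-group factors: for a (suitably regular) subalgebra $Q\subseteq M^t\bar\otimes M^t = L(\G\times\G)^t$, either $Q$ intertwines into one of the tensor factors' pieces, or its normalizer/quasi-normalizer generates an amenable-relative algebra.

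\smallskip

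\textbf{Key steps.} First I would record that $\Delta(M^t) = \Delta(L(\La))$ has a large relative commutant/normalizer inside $M^t\bar\otimes M^t$ — indeed $\Delta(L(\La))' \cap (M^t\bar\otimes M^t)$ and the normalizer of $\Delta(L(\La))$ are well understood from \cite{IPV10}: $\Delta(L(\La))$ is ``matched'' with the group-theoretic structure. Second, apply Popa's intertwining dichotomy (Theorem~\ref{corner}, in the strong form $\prec^s$) in the group factor $L(\G\times\G)$ to the subalgebra $\Delta(L(\G_{I\setminus\{i\}})^t)$: either it embeds into $M^t\bar\otimes L(\G_{I\setminus\{j\}})^t$ for some $j$ (which is the desired conclusion), or it does not embed into any such algebra for any $j$. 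Third, in the latter case, I would run the biexactness/weak amenability machinery — this is where \cite{PV12} (classification of normalizers inside crossed products/tensor products of biexact weakly amenable groups) and the solidity-type arguments enter: the failure of intertwining forces $\Delta(L(\G_{I\setminus\{i\}})^t)$ to be amenable relative to $M^t\bar\otimes \C$ (the second coordinate being ``used up''), hence amenable relative to a point, hence amenable — contradicting nonamenability of the infinite tensor product $L(\G_{I\setminus\{i\}})$. A symmetric argument handles the first coordinate, and by combining, some coordinate $j$ must absorb the image. One must also ensure the $j$ produced lies in $I$ and that the ``missing'' index on the target side can be taken to be a single coordinate rather than a finite set — here the icc, property (T), biexactness hypotheses on each $\G_k$ guarantee rigidity of the individual tensor slots, so a finite set of indices can be shrunk to one by a pigeonhole/maximality argument on the partition.

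\smallskip

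\textbf{Main obstacle.} The hard part will be the second (dichotomy) step: showing that if $\Delta(L(\G_{I\setminus\{i\}})^t)$ fails to intertwine into $M^t\bar\otimes L(\G_{I\setminus\{j\}})^t$ for \emph{every} $j$, then one genuinely obtains relative amenability over a single tensor factor and thus a contradiction. This requires carefully combining (a) Ioana's ultrapower techniques from \cite{Io11} to upgrade intertwining of the ultrapower, (b) the transfer of ``$s$-intertwining'' along the comultiplication (so that negative intertwining statements pull back correctly, using the rigidity of $\Delta$ as in \cite[Section 4]{IPV10} and \cite{CdSS16}), and (c) a clustering-at-infinity argument of the type in Proposition~\ref{relamenable} to land the relative amenability in the right place. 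Technical bookkeeping of amplifications (the scalar $t$) and of the fact that $I$ is infinite — so one cannot simply induct on the number of tensor factors — is the delicate point; the infinite tensor product structure is exactly what makes ``the tail'' $L(\G_{\{k\geq n\}})$ behave like an amenable absorber, and one must prevent $\Delta(L(\G_{I\setminus\{i\}})^t)$ from being pushed entirely into such a tail without being detected by the intertwining relation.
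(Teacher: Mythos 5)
Your overall toolbox is the right one (the comultiplication $\Delta$, the Popa--Vaes normalizer dichotomy for weakly amenable biexact groups, and a clustering-at-infinity relative amenability argument), but the way you propose to resolve the dichotomy has a genuine gap. You want to argue: \emph{if $\Delta(L(\Gamma_{I\setminus\{i\}})^t)$ fails to intertwine into $M^t\bar\otimes L(\Gamma_{I\setminus\{j\}})^t$ for every $j$, then it is amenable relative to $M^t\otimes\mathbb C$, hence amenable, contradiction.} Neither implication is available. Applying \cite[Theorem 1.6]{PV12} with respect to the decomposition $M^t\bar\otimes M^t= (M^t\bar\otimes L(\Gamma_{I\setminus\{j\}})^t)\rtimes\Gamma_j$ only ever produces relative amenability over $M^t\bar\otimes L(\Gamma_{I\setminus\{j\}})^t$ --- one coordinate removed at a time --- and in the paper that branch is \emph{excluded}, not exploited: since $\Delta(M^t)$ normalizes $\Delta(L(\Gamma_{I\setminus\{i\}})^t)$, relative amenability of the normalizer over $M^t\bar\otimes L(\Gamma_{I\setminus\{j\}})^t$ would force $\Gamma_j$ to be finite by \cite[Theorem 7.2(2)]{IPV10}. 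Moreover, ``amenable relative to $M^t\otimes\mathbb C$, hence amenable relative to a point, hence amenable'' is false in general ($M^t\otimes\mathbb C$ is a large nonamenable algebra); the correct mechanism is the special property of the comultiplication in \cite[Proposition 7.2]{IPV10}, which converts relative amenability of $\Delta(Q)$ over $M^t\otimes1$ into amenability of $Q$ itself.

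The actual dichotomy, obtained by running \cite{PV12} twice (first on a diffuse amenable $A\subseteq\Delta(L(\Gamma_i))$, then on its normalizer, which contains $\Delta(L(\Gamma_{I\setminus\{i\}})^t)$, and finally invoking \cite[Appendix]{BO08}), reads: for each $j$, \emph{either} $\Delta(L(\Gamma_{I\setminus\{i\}})^t)\prec M^t\bar\otimes L(\Gamma_{I\setminus\{j\}})^t$ (the desired conclusion), \emph{or} the complementary single factor satisfies $\Delta(L(\Gamma_i))\prec M^t\bar\otimes L(\Gamma_{I\setminus\{j\}})^t$. The contradiction in the bad case therefore comes from $\Gamma_i$, not from $\Gamma_{I\setminus\{i\}}$: if the second alternative holds for every $j$, one upgrades to $\prec^s$ (using that $\mathcal Z(\Delta(L(\Gamma_i))'\cap M^t\bar\otimes M^t)=\mathbb C1$ because $\Lambda$ is icc), intersects over finite subsets via \cite[Lemma 2.8]{DHI16}, conjugates $\Delta(L(\Gamma_i))$ into arbitrarily deep tails by \cite[Proposition 12]{OP03}, and then Proposition \ref{relamenable} yields amenability of $\Delta(L(\Gamma_i))$ relative to $M^t\otimes1$, whence $L(\Gamma_i)$ is amenable by \cite[Proposition 7.2]{IPV10} --- a contradiction. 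This is a \emph{positive} intertwining statement into all tails that drives the clustering argument; your version tries to extract the clustering from a \emph{negative} intertwining statement about the big piece, and there is no mechanism to do that. (Your concern about shrinking a finite set of target indices to a single one is also moot: the single removed index $j$ comes directly from the choice of crossed-product decomposition over $\Gamma_j$ to which \cite{PV12} is applied.)
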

\begin{proof} Using that $\G_i$'s are weakly amenable and biexact we show next the following

\begin{claim}\label{prop:IntertwineDichotomy} For every $i,j\in I$ one of the following holds: 
\begin{enumerate}
\item [a)] $\Delta(L(\Gamma_{I\setminus\set{i}})^t)\prec_{M^t\bar\otimes M^t} M^t\bar\otimes L(\Gamma_{I\setminus\set{j}})^t $, or
\item [b)] $\Delta(L(\Gamma_{i}))\prec_{M^t\bar\otimes M^t} M^t\bar\otimes L(\Gamma_{I\setminus\set{j}})^t $.
\end{enumerate}
\end{claim}

\noindent\emph{Proof of Claim \ref{prop:IntertwineDichotomy}}. One has the following decomposition 
$M^t\bar\otimes M^t=M^t\bar\otimes  L(\Gamma_{I\setminus\set{j}})^t\bar\otimes L(\Gamma_j).$
Fix $A\subset \Delta(L(\Gamma_i)) $ a diffuse amenable subalgebra.  Using \cite[Theorem 1.6]{PV12},  we have either
\begin{enumerate}
\item [c)] $A\prec_{M^t\bar\otimes M^t} M^t\bar\otimes L(\Gamma_{I\setminus\set{j}})^t $, or\label{case:DichotomyStep1Intertwine}
\item [d)] \label{case:DichotomyStep1Normal} $\mathcal{N}_{M^t\bar\otimes M^t}(A)''$ is amenable relative to  $M^t\bar\otimes L(\Gamma_{I\setminus\set{j}})^t $. 
\end{enumerate}
Suppose d) holds.  As $\Delta(L(\Gamma_{I\setminus\set{i}})^t)\subseteq \mathcal{N}_{M^t\bar\otimes M^t}(A)'' $, then \cite[Theorem 1.6]{PV12} implies either
\begin{enumerate}[resume]
\item [e)] $\Delta(L(\Gamma_{I\setminus\set{i}})^t)\prec  M^t\bar\otimes L(\Gamma_{I\setminus\set{j}})^t$, or\label{case:DichotmyStep2Intertwine}
\item [f)] $\mathcal{N}_{M^t\bar\otimes M^t}\Delta( L(\Gamma_{I\setminus\set{i}})^t )''$ is amenable relative to $M^t\bar\otimes L(\Gamma_{I\setminus\set{j}})^t $.\label{case:DichotomyStep2Normal}
\end{enumerate}
However, f) cannot hold.  Indeed, since $\Delta(M^t)\subseteq \mathcal{N}_{M^t\bar\otimes M^t} (\Delta(L(\Gamma_{I\setminus\set{i}}))^t)'' $, then \cite[Theorem 7.2(2)]{IPV10} would imply $\Gamma_i $ is finite, a contradiction.  Hence, for every diffuse subalgebra $A\subset L(\Gamma_i) $, either c) or e) must occur.  Using \cite[Appendix]{BO08}, we get the claim.$\hfill\blacksquare$
\vskip 0.04in

Now assume by contradiction the conclusion does not hold. By Claim \ref{prop:IntertwineDichotomy}, for every $j\in I$ we have
\begin{align}\label{eq:IntertwineAllMinus1}
 \Delta(L(\Gamma_i))\prec_{M^t\bar\otimes M^t} M^t\bar\otimes L(\Gamma_{I\setminus\set{j}})^t.
\end{align}
Next we observe that $\mathcal{Z}(\Delta(L(\Gamma_i))'\cap M^t\bar\otimes M^t  )=\C 1 $.  To see this, let $z\in \mathcal{Z}(\Delta(L(\Gamma_i))'\cap M^t\bar\otimes M^t  ) $.  Since $\Delta(L(\Gamma_{I\setminus \{i\}})^t)\subset \Delta(L(\Gamma_i))'\cap M^t\bar\otimes M^t $, one can check that $z\in \Delta(M^t)'\cap M^t\bar\otimes M^t $. However, since $\La$ is icc we have  $\Delta(M^t)'\cap M^t\bar\otimes M^t=\mathbb C 1 $ and our claim follows. 

Thus \eqref{eq:IntertwineAllMinus1}  further implies that $\Delta(L(\Gamma_i))\prec^s_{M^t\bar\otimes M^t} M\bar\otimes L(\Gamma_{I\setminus\set{j}})$. Hence, applying \cite[Lemma 2.8 (2)]{DHI16}, for every finite subset $F\subset I $ we have 
\begin{align}\label{eq:IntertwineFiniteStronly}
 \Delta(L(\Gamma_i))\prec_{M^t\bar\otimes M^t} M^t\bar\otimes L(\Gamma_{I\setminus F})^t.
\end{align}
 Next we show that (\ref{eq:IntertwineFiniteStronly})  implies the following   
\begin{claim}\label{relamen}$\Delta(L(\Gamma_i)) $ is amenable relative to $ M^t\otimes 1$.
\end{claim}
\noindent \emph{Proof of Claim \ref{relamen}}. Let $I_n= \{n,n+1,n+2,...\}$.  Since $\Delta(L(\Gamma_{i}))'\cap M^t\bar\otimes M^t $ is a factor, then using \cite[Proposition 12]{OP03}, for every $n\in \mathbb N$ there is $t_n>0 $ and $u_n\in \mathcal{U}(M^t\bar\otimes M^t) $ so that
\begin{align*}
u_n \Delta(L(\Gamma_i)) u_n^*\subset (M^t\bar\otimes L(\Gamma_{I_n})^t)^{t_n}.
\end{align*}
Naturally, we have the following inclusions $M^t\bar\otimes L(\Gamma_{I_n})^{tt_n} \subset M^t\bar\otimes L(\Gamma_{I_n})\bar\otimes L(\Gamma_{n-1})=M^t\bar\otimes L(\Gamma_{I_{n-1}})$.   Thus, for every $n\in I$, there is $u_n\in\mathcal{U}(M^t\bar\otimes M^t) $ so that 
\begin{align}\label{eq:ConjugateFiniteIntoMax}
u_n\Delta(L(\Gamma_i))u_n^*\subset M^t\bar\otimes L(\Gamma_{I_{n-1}}).
\end{align}

\noindent Thus the claim follows from \eqref{eq:ConjugateFiniteIntoMax} and Proposition \ref{relamenable}. $\hfill\blacksquare$
\vskip 0.04in
\noindent Finally, Claim \ref{relamen} and \cite[Proposition 7.2]{IPV10} imply $L(\Gamma_{i}) $ amenable, a contradiction.\end{proof}

\begin{prop}\label{IntIntoGroup1} Assume Notation \ref{directprod}. Then for all $i\in I $, there exists a non-amenable subgroup $\Lambda_i\leqslant \Lambda $ with non-amenable centralizer $C_\Lambda(\Lambda_i) $ such that $L(\Gamma_{I\setminus\set{i}})^t\prec_{M^t} L(\Lambda_i).$
\end{prop}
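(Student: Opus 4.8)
The plan is to feed the comultiplication-level intertwining supplied by Proposition \ref{firstintertwining} into the ``descent to subgroups'' machinery of \cite{IPV10, CdSS16} and then to read off the non-amenability features of $\La_i$ from those of the factors $L(\G_k)$. Fix $i\in I$ and let $j=j(i)$ be the index produced by Proposition \ref{firstintertwining}, so that $\Delta(L(\G_{I\setminus\set{i}})^t)\prec_{M^t\bar\otimes M^t}M^t\bar\otimes L(\G_{I\setminus\set{j}})^t$. Write $P_1=L(\G_{I\setminus\set{i}})^t$ and $P_2=P_1'\cap M^t=L(\G_i)$; these are commuting non-amenable II$_1$ subfactors of $M^t$ with $P_1\vee P_2=M^t$ and $P_k'\cap M^t=P_{3-k}$. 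As a first step I would promote the above to the strong form $\Delta(P_1)\prec^s_{M^t\bar\otimes M^t}M^t\bar\otimes L(\G_{I\setminus\set{j}})^t$: arguing exactly as in the proof of Proposition \ref{firstintertwining}, the relative commutant $\Delta(P_1)'\cap(M^t\bar\otimes M^t)$ is contained in $\Delta(M^t)'\cap(M^t\bar\otimes M^t)=\C 1$ (here $\La$ icc is used), hence has trivial center, and then \cite[Lemma 2.8]{DHI16} upgrades $\prec$ to $\prec^s$.

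The heart of the argument is the descent, which I would run as follows. Since $\Delta(v_\la)=v_\la\otimes v_\la$ is symmetric, the flip automorphism of $M^t\bar\otimes M^t$ fixes $\Delta(M^t)$ pointwise, so the intertwining above is automatically two-sided. Combining this symmetry with Popa's intertwining calculus (Theorem \ref{corner}), the identification of (one-sided) quasi-normalizer algebras of group inclusions (Theorems \ref{iqncalc} and \ref{intcorner}), the classification of normalizers from \cite{PV12}, and the combinatorial and ultrapower techniques of \cite{IPV10, Io11, CdSS16}, I expect to obtain a pair of commuting subgroups $\Sg_1,\Sg_2\leqslant\La$, $[\Sg_1,\Sg_2]=1$, together with the requisite unitary conjugations, such that $P_1\prec^s_{M^t}L(\Sg_1)$ and $P_2\prec_{M^t}L(\Sg_2)$. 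Setting $\La_i:=\Sg_1$ then yields the asserted intertwining $L(\G_{I\setminus\set{i}})^t\prec_{M^t}L(\La_i)$.

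The non-amenability assertions are then bookkeeping. The factor $P_1=L(\G_{I\setminus\set{i}})$ (up to amplification) is an infinite tensor product of the non-amenable factors $L(\G_k)$, hence is non-amenable; were $\La_i$ amenable, $L(\La_i)$ would be amenable and $P_1\prec_{M^t}L(\La_i)$ would force a corner of the factor $P_1$, hence $P_1$ itself, to be amenable, a contradiction. In the same way $P_2=L(\G_i)$ is a non-amenable factor, so $\Sg_2$ is non-amenable; since $\Sg_2\leqslant C_\La(\Sg_1)=C_\La(\La_i)$, the centralizer $C_\La(\La_i)$ is non-amenable as well. The genuinely hard point is the descent step: extracting honest commuting subgroups of $\La$ from the comultiplication-level intertwining, which requires reproducing --- in the infinite tensor product setting, with careful tracking of amplifications --- the technical core of \cite{CdSS16}, exploiting in an essential way that the intertwining is two-sided and the group-theoretic interpretation of quasi-normalizers from \cite{JGS10}.
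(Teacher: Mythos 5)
Your proposal follows essentially the same route as the paper: the paper's entire proof of this proposition is the one-line observation that Proposition \ref{firstintertwining} combined with \cite[Theorem 4.1]{DHI16} (see also the proof of \cite[Theorem 3.3]{CdSS16}) produces the subgroup $\La_i$ with non-amenable centralizer, and your ``descent'' paragraph is exactly a sketch of what that cited theorem delivers, while your non-amenability bookkeeping is the standard argument. One small correction: the claimed inclusion $\Delta(P_1)'\cap(M^t\bar\otimes M^t)\subseteq\Delta(M^t)'\cap(M^t\bar\otimes M^t)$ is backwards (the left-hand side contains $\Delta(P_2)$, which is nontrivial); the correct argument, as in the proof of Proposition \ref{firstintertwining}, is that any element of the \emph{center} of $\Delta(P_1)'\cap(M^t\bar\otimes M^t)$ commutes with both $\Delta(P_1)$ and $\Delta(P_2)\subseteq\Delta(P_1)'\cap(M^t\bar\otimes M^t)$, hence with $\Delta(M^t)$, and therefore lies in $\Delta(M^t)'\cap(M^t\bar\otimes M^t)=\C 1$.
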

\begin{proof}
This follows directly from Proposition \ref{firstintertwining} and \cite[Theorem 4.1]{DHI16}, (see also the proof of \cite[Theorem 3.3]{CdSS16}).
\end{proof}

\begin{theorem}\label{TwoFoldedProduct}Assume Notation \ref{directprod}. In addition, assume that $\G_i$ has property (T), for all $i\in I$. For each $i\in I$ there is a decomposition $\La=\Psi_i\oplus \Theta_i$, a scalar $t_i>0$ and $u_i\in \mathcal U(M)$ satisfying \begin{equation}u_iL(\G_i)^{t_i}u_i^*=L(\Psi_i)\text{ and  }u_iL(\G_{I\setminus\{i\}})^{t/t_i}u_i^*=L(\Theta_i).\end{equation} 

\end{theorem}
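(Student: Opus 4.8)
The plan is to upgrade the one-sided intertwining from Proposition \ref{IntIntoGroup1} into an honest unitary conjugacy of group von Neumann algebras, then to identify $L(\Lambda_i)$ and its ``complement'' with the group von Neumann algebras of a genuine direct sum decomposition of $\Lambda$. First I would fix $i\in I$ and apply Proposition \ref{IntIntoGroup1} to obtain a non-amenable subgroup $\Lambda_i\leqslant \Lambda$ with non-amenable centralizer $C_\Lambda(\Lambda_i)$ and $L(\G_{I\setminus\{i\}})^t\prec_{M^t}L(\Lambda_i)$. The key point is that $L(\G_{I\setminus\{i\}})^t$ is the relative commutant in $M^t$ of the property (T) factor $L(\G_i)$, and conversely $L(\G_i)$ is recovered (up to amplification) as the relative commutant of $L(\G_{I\setminus\{i\}})^t$. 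So I would run the standard ``two-sided commuting squares'' argument from \cite{CdSS16} (proof of Theorem 3.3, compare also \cite{IPV10}): use that $L(\Lambda_i)$ commutes with $L(C_\Lambda(\Lambda_i))$, both are non-amenable, and the intertwining $L(\G_{I\setminus\{i\}})^t\prec L(\Lambda_i)$ together with $L(\G_i)\prec L(C_\Lambda(\Lambda_i))$ (which follows by a symmetric application of the dichotomy, or by taking relative commutants) to conclude via \cite[Lemma 2.8]{DHI16} and the transitivity of $\prec$ that after a unitary $u_i\in\mathcal U(M^t)$ one has $u_i L(\G_{I\setminus\{i\}})^t u_i^* \subseteq L(\Lambda_i)$ with finite index, and in fact equality after adjusting the amplification; similarly $u_i L(\G_i) u_i^*\subseteq L(C_\Lambda(\Lambda_i))$.

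Next I would pin down the group-theoretic decomposition. Set $\Psi_i\leqslant\Lambda$ and $\Theta_i\leqslant\Lambda$ to be the subgroups with $L(\Psi_i)=u_iL(\G_i)^{t_i}u_i^*$ and $L(\Theta_i)=u_iL(\G_{I\setminus\{i\}})^{t/t_i}u_i^*$ for the appropriate $t_i>0$; the existence of such groups (rather than merely subalgebras) should come from the fact that these are the relative commutants of each other inside $L(\Lambda)$ and from a quasinormalizer computation. Concretely, $L(\Theta_i)\subseteq L(\Lambda)$ with $L(\Psi_i)\subseteq L(\Theta_i)'\cap L(\Lambda)$ both factors whose tensor product is all of $L(\Lambda)$ forces, via the group structure and Theorem \ref{iqncalc}, that $\Psi_i$ and $\Theta_i$ are subgroups with $\Psi_i\cap\Theta_i=1$, $[\Psi_i,\Theta_i]=1$, and $\Psi_i\cdot\Theta_i=\Lambda$ (using icc-ness of $\Lambda$ and that $vZ(\Lambda)=1$ to rule out a central correction term); hence $\Lambda=\Psi_i\oplus\Theta_i$ and $L(\Lambda)=L(\Psi_i)\bar\otimes L(\Theta_i)$. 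The amplification constraint $t_i\cdot(t/t_i)=t$ is automatic from $L(\Psi_i)\bar\otimes L(\Theta_i)=M^t=L(\G_i)^{t_i}\bar\otimes L(\G_{I\setminus\{i\}})^{t/t_i}$.

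The main obstacle I anticipate is the passage from ``$\prec$'' to an actual \emph{unitary} identification $u_iL(\G_i)^{t_i}u_i^*=L(\Psi_i)$ on the nose, i.e.\ showing the intertwining is implemented by a unitary and that the resulting subalgebra is exactly the group von Neumann algebra of a subgroup rather than a proper finite-index subfactor or a twisted/cocycle version. This is where property (T) of $\G_i$ is essential: it rules out the ``relative amenability'' alternatives in the \cite{PV12} dichotomy and, combined with \cite[Theorem 4.1]{DHI16} and the rigidity of \cite{IPV10}, forces the intertwiner to be a genuine isomorphism up to amplification with no index defect. I would handle this exactly as in \cite[proof of Theorem 3.3]{CdSS16}: first get $L(\G_{I\setminus\{i\}})^t\prec^s L(\Lambda_i)$ (spreading the intertwining using factoriality of the relative commutant, as already done in Proposition \ref{firstintertwining}), then apply the structure theorem for commuting squares of factors in a II$_1$ factor to conclude that $L(\Lambda_i)$ and $L(C_\Lambda(\Lambda_i))$ are, up to unitary conjugacy and amplification, exactly $L(\G_{I\setminus\{i\}})$ and $L(\G_i)$. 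A secondary technical point is verifying that $\Lambda_i$ and $C_\Lambda(\Lambda_i)$ generate $\Lambda$ and intersect trivially; this uses that $L(\Lambda_i)\bar\otimes L(C_\Lambda(\Lambda_i))$ has finite index in $L(\Lambda)$ (from the amplification bookkeeping) hence equals it, and then the Fourier-expansion/height argument of \cite{IPV10} upgrades the algebra equality to the group-level direct sum decomposition $\Lambda=\Psi_i\oplus\Theta_i$.
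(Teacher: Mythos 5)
Your overall strategy---upgrade the intertwining from Proposition \ref{IntIntoGroup1} to a spatial identification of corners and then descend to a group-level direct sum---is the same as the paper's, but two of your key steps do not work as stated.

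First, you have misplaced the role of property (T). The relative-amenability alternatives in the Popa--Vaes dichotomy are already eliminated by biexactness and weak amenability in Proposition \ref{firstintertwining}; property (T) plays no role there. Where it is genuinely needed (and the paper's remark after Theorem \ref{infprodrig} states it is needed \emph{only} there) is in controlling the virtual centralizer $\Omega=vC_\Lambda(\Lambda_i)$. One has $[\Lambda:\Lambda_i\Omega]<\infty$ by \cite[Claim 4.7]{CdSS16}, but $\Omega$ is a priori only an increasing union $\Omega=\bigcup_k\Omega_k$ of subgroups generated by finitely many $\Lambda_i$-conjugation orbits, and nothing forces this union to terminate. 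The paper first shows that the relative commutant $r(L(\Lambda_i)'\cap M^t)rq$ inherits property (T) from $L(\Gamma_i)$ through a chain of finite-index inclusions (this is where biexactness is also used, via Ozawa's solidity theorem, to make the inclusion $D\subseteq qL(\Lambda_i)q$ finite index), and then uses property (T) to intertwine this algebra into $L(\Lambda_i\Omega_{k_0})$ for some finite $k_0$; Claim \ref{extendintertwining} then forces the chain $\Lambda_i\Omega_k$ to stabilize, which is what makes $\Lambda_i$ finite-by-icc and puts the commensurability machinery of \cite{CdSS17} in play. Your proposal never engages with the virtual centralizer, and the mechanism you assign to property (T) is not the one that is actually needed.

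Second, your claim that $L(\Lambda_i)\bar\otimes L(C_\Lambda(\Lambda_i))$ ``has finite index in $L(\Lambda)$ hence equals it'' is false: a finite-index subfactor of a II$_1$ factor need not be the whole factor, and correspondingly $\Lambda_i\cdot C_\Lambda(\Lambda_i)$ is in general only a finite-index subgroup of $\Lambda$ which may moreover intersect its factors nontrivially. In the paper the groups $\Psi_i$ and $\Theta_i$ of the conclusion are \emph{not} $\Lambda_i$ and $C_\Lambda(\Lambda_i)$; they are produced by \cite[Theorems 4.6 and 4.7]{CdSS17} from the spatial commensurability $L(\Gamma_{I\setminus\{i\}})^t\cong^{com}_{M^t}L(\Lambda_i)$ together with the group-theoretic analysis of the virtual center sketched above. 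The quasinormalizer computation of Theorem \ref{iqncalc} alone will not bridge the gap from a finite-index tensor decomposition to an honest direct sum decomposition of $\Lambda$.
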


\begin{proof}Fix $i\in I $ and write $M^t=L(\Gamma_I)^t=L(\Gamma_{I\setminus\set{i}})^t\bar\otimes L(\Gamma_i)=A\bar\otimes B $. By Proposition \ref{IntIntoGroup1}, we have $A\prec_{M^t} L(\Lambda_i)$ for some non-amenable group $\Lambda_i\leqslant \Lambda $ with non-amenable $C_{\La}(\La_i)$.  By \cite[Proposition 2.4]{CKP14}, there exist nonzero projections $a\in A, q\in L(\Lambda_i) $, a partial isometry $v\in M^t$, a subalgebra $D\subseteq qL(\Lambda_i) q$, and a $\ast $-isomorphism $\phi:aAa\to D  $ such that 
\begin{align}
D\vee (D'\cap qL(\Lambda_i) q)\subseteq qL(\Lambda_i) q\quad \text{has finite index, and}\label{eq:DHasFiniteIndex}\\
\phi(x)v=vx \quad \forall \, x\in aAa.\label{eq:PartialIsomIntertwinesA}
\end{align}
Notice that $vv^*\in D'\cap qM^tq $ and $v^*v\in (aAa)'\cap aM^ta=a\otimes B $.  Hence there is a projection $b\in B$ satisfying $v^*v=a\otimes b $.  Picking $u\in\mathcal{U}(M^t) $ so that $v=u(a\otimes b) $ then \eqref{eq:PartialIsomIntertwinesA} gives 
\begin{align}\label{cornerbegin}
Dvv^*=vaAav^*=u(aAa\otimes b)u^*.
\end{align}
Passing to the relative commutants, we obtain $vv^*(D'\cap qM^tq) vv^*=u(a\otimes bBb) u^*$. This further implies that there exist $s_1,s_2> 0 $ \begin{align}\label{corner}(D'\cap qM^tq)z=u(a\otimes bBb)^{s_1}u^*= L(\G_i)^{s_2},
\end{align}
where $z $ is the central support projection of $ vv^*$ in $D'\cap qMq$.   Now notice 
\begin{align*}
D'\cap qM^tq\supseteq (qL(\Lambda_i)q)'\cap qM^tq= (L(\Lambda_i)'\cap M^t)q\supseteq L(C_\Lambda(\Lambda_i))q,
\end{align*}
where $L(C_\Lambda(\Lambda_i)) $ has no amenable direct summand since $C_\Lambda(\Lambda_i) $ is a non-amenable group.  Moreover we also have  $D'\cap qM^tq\supseteq D'\cap qL(\Lambda_i)q $. Thus $(L(\Lambda_i)'\cap M^t)z$ and $(D'\cap qL(\Lambda_i)q)z$ are commuting subalgebras of $(D'\cap qM^tq)z$ where $(L(\Lambda_i)'\cap M^t )z$ has no amenable direct summand.  Since $\Gamma_i $ was assumed to be bi-exact, then using \eqref{corner} and \cite[Theorem 1]{Oz03} it follows that $(D'\cap qL(\Lambda_i)q) z$ is purely atomic.  Thus, cutting by a central projection $r'\in D'\cap q(\Lambda_i)q$ and using \eqref{eq:DHasFiniteIndex} we may assume that $D\subseteq qL(\Lambda_i)q$ is a finite index inclusion of algebras.  
Processing as in the second part of \cite[Claim 4.4]{CdSS16}, we may assume that $D\subseteq qL(\Lambda_i)q $ is a finite index of II$_1$ factors. Moreover one can check that if one replaces $v$ by the partial isometry of the polar decomposition of $r'v\neq 0$ then all relations \eqref{eq:PartialIsomIntertwinesA},\eqref{cornerbegin} and \eqref{corner} are still satisfied. In addition, we can assume without any loss of generality that the support projection satisfies $s(E_{L(\La_i)}(vv^*)=q$. Thus, following the terminology introduced in \cite[Definition 4.1]{CdSS17}  we actually have that a corner of $A$ is spatially commensurable to a corner of $L(\La_i)$, i.e.\begin{equation}\label{commensurability}
A\cong^{com}_{M^t} L(\La_i).
\end{equation}

Performing the downward basic construction \cite[Lemma 3.1.8]{Jo81}, there exists  $e\in \mathscr P(qL(\Lambda_i) q)$ and a II$_1$ subfactor $R\subseteq D\subseteq qL(\La_i) q=\langle D,e\rangle$ such that $[D:R]=[qL(\Lambda_i) q:D]$ and $Re=eL(\Lambda_i) e$. Keeping with the same notation, by relation (\ref{eq:PartialIsomIntertwinesA}) the restriction $\phi^{-1}: R\ra aAa$ is  an injective $\ast$-homomorphism such that $T=\phi^{-1}(R)\subseteq aAa$ is a finite Jones index subfactor and 
\begin{equation}\label{4}
\phi^{-1} (y)v^*=v^*y\text{, for all }y\in R.
\end{equation}  
Let $\theta':Re\ra R$ be the $\ast$-isomorphism given by $\theta(xe)=x$. Since $e$ has full central support in $\langle D,e\rangle$ one can see that $ev\neq 0$. Letting $w_0$ be a partial isometry so that $w^*_0|v^*e|=v^*e$, then $Re=eL(\Lambda_i) e$ together with (\ref{4}) imply that $\theta=\phi^{-1}\circ\theta': eL(\Lambda_i) e\ra aAa$ is an injective $\ast$-homomorphism satisfying $\theta(eL(\Lambda_i )e)=T$ and  
\begin{equation}\label{3}
\theta (y)w^*_0=w^*_0y\text{, for all }y\in eL(\Lambda_i) e.
\end{equation} 
Notice that $w^*_0w_0\in(T'\cap aAa)\bar\otimes  B$ and proceeding as in the proof of \cite[Proposition 12]{OP03} one can further assume that $w^*_0w_0\in\mathscr Z(T' \cap aAa) \bar\otimes B$. Since $[aAa:T]<\infty$ then $T' \cap aAa$ is finite dimensional and so is $\mathscr Z(T' \cap aAa)$. Thus, replacing the partial isometry $w_0$ by $w:=w_0r_0$, for some minimal projection  $r_0\in  \mathscr Z(T' \cap aAa)$ satisfying $r_0w^*_0|v^*e|\neq 0$, we see that all relations above still hold including relation (\ref{3}). Moreover, we can assume that $w^*w= z_1\otimes z_2$, for some nonzero projections $z_1\in   \mathscr Z(T' \cap aAa)$ and $z_2\in B$. Using relation (\ref{3}) we get  \begin{equation}\label{5}w^* L(\Lambda_i) w=\theta (eL(\Lambda_i) e)w^*w= Tz_1 \otimes z_2.
\end{equation}
Since $T\subseteq aAa$ is finite index inclusion of II$_1$ factors then by the local index formula \cite{Jo81} it follows $Tz_1\subseteq z_1Az_1$ is a finite index inclusion of II$_1$ factors as well. 
Also, we have 
 \begin{equation}\label{6}
( w^* L(\Lambda_i) w)' \cap (z_1\otimes z_2)M^t (z_1\otimes z_2)= ((Tz_1)'\cap z_1Az_1) \bar\otimes z_2Bz_2.
 \end{equation}
 
 Altogether, the previous relations imply that
 \begin{equation}\label{6'}\begin{split}
 Tz_1\bar \otimes z_2Bz_2&\subseteq Tz_1\vee (Tz_1'\cap z_1Az_1) \bar\otimes z_2Bz_2 \\&= w^*L(\Lambda_i) w\vee  w^*( L(\Lambda_i)'\cap M^t) w\\
 &=w^* L(\Lambda_i) w\vee \left((w^* L(\Lambda_i) w)' \cap (z_1\otimes z_2)M^t (z_1\otimes z_2)\right)\\
 &\subseteq z_1Az_1\bar\otimes z_2Bz_2.
 \end{split}
 \end{equation} 
Since $Tz_1\subseteq z_1Az_1$ if a finite index inclusion of II$_1$ factors then so is $Tz_1\bar\otimes z_2Bz_2\subseteq z_1Az_1\bar\otimes z_2Bz_2 $.  Let $f:=ww^*$ and notice $f=re$, for some projection $r\in L(\La_i)'\cap M^t$. Letting $u\in \mathscr U(M^t) $ such that $w^*= uww^*=uf$, then relation (\ref{6'}) further implies that \begin{equation}\label{finiteindex2}f(L(\Lambda_i) \vee (L(\Lambda_i)'\cap M^t)) f=L(\Lambda_i) f \vee  f(L(\Lambda_i) '\cap M^t)f\subseteq f M^t f
\end{equation}
is an inclusion of finite index II$_1$ factors. In addition, (\ref{6'}) gives that $\dim_\mathbb C (\mathscr Z(f(L(\Lambda_i) \vee (L(\Lambda_i)'\cap M^t)) f))\leq [ z_1Az_1\bar\otimes z_2Bz_2 : Tz_1\bar\otimes z_2Bz_2]<\infty$. Since the central support of $e$ in $qL(\La_i)q$ equals $q$ then \eqref{finiteindex2} implies that  
\begin{equation}\label{finiteindex3}q(L(\Lambda_i)qr \vee r(L(\Lambda_i)'\cap M^t))rq=qr(L(\Lambda_i) \vee (L(\Lambda_i)'\cap M^t)) qr  \subseteq qr M^t qr,
\end{equation}
is a finite index inclusion of II$_1$ factors. In particular, $ qL(\Lambda_i)qr$ and  $r(L(\Lambda_i)'\cap M^t)rq $ are commuting II$_1$ factors.

To this end we notice that since $0\neq r_0w^*_0|v^*e |=w^*|v^*e|$ then $0\neq w^*|v^*e|^{1/2}$. Thus $0\neq w^*|v^*e| w= v^*ew$ and since $v$, $w$ are partial isometries we conclude that $0\neq vv^* eww^*$. However since $ww^*= r_0w_0w_0^*\leq s(|v^*e|)$ then $ww^*\leq e$. Combining with the above it follows that $0\neq vv^*ww^*$ and hence $zf=z ww^*\neq 0$. Thus further implies that \begin{equation}\label{nonzeroproduct2}
zr\neq 0.
\end{equation}
 
\noindent Next we show the following 
\begin{claim}\label{propT} $r(L(\Lambda_i)'\cap M^t )rq$ has property (T).
\end{claim}
\noindent \emph{Proof of Claim \ref{propT}}. Since $D \subseteq qL(\La_i)q$ is a finite index inclusion of II$_1$ factors then so is   $Dr \vee r(L(\La_i)'\cap M^t)rq\subseteq  qL(\La_i)q r \vee r(L(\La_i)'\cap M^t) rq$. Using \eqref{finiteindex3} it follows that $Dr \vee r(L(\La_i)'\cap M^t)rq\subseteq rq  M^trq$ is finite index as well. Hence $Dr \vee r(L(\La_i)'\cap M^t)rq\subseteq  Dr \vee r(D'\cap qM^tq) r$ is also a finite index inclusion. Since $D$ is a factor one can check that  $E_{Dr \vee r(L(\La_i)'\cap M^t)rq}(x)= E_{ (L(\La_i)'\cap M^t)q}(x)$, for all $x\in r(D'\cap qM^tq)r$. This combined with the above entail that $r(L(\La_i)'\cap M^t)rq\subseteq  r(D'\cap qM^tq) r$ is finite index. By \cite[Theorem 1.1.2 (ii)]{Po94} $r(L(\La_i)'\cap M^t)rz\subseteq  r(D'\cap qM^tq) rz$ is finite index and since property (T) passes to amplifications and finite index subalgebras, then \eqref{corner} implies that $r(L(\Lambda_i)'\cap M^t )rz$ has property (T).  As $r(L(\Lambda_i)'\cap M^t )rq$ is a factor we conclude  $r(L(\Lambda_i)'\cap M^t )rq$ has property (T). $\hfill\blacksquare$
\vskip 0.04in
Now consider $\Omega:=vC_\Lambda(\La_i)=\set{\lambda\in \Lambda \,|\, |\lambda^{\Lambda_i}|<\infty} $, the virtual centralizer of $\Lambda_i $ in $\Lambda $. Using \cite[Claim 4.7]{CdSS16}  we have $[\Lambda: \Lambda_i\Omega]<\infty $ and hence $ \Lambda_i\Omega\leqslant \Lambda$  is an icc subgroup; in particular, $vZ(\Lambda_i\Omega)=1$. Consider $vZ(\Omega) =\{\omega\in\Omega \,|\, |\omega^\Omega|<\infty\}$, the virtual center of $\Omega$. Since $\La_i$ normalizes $\Omega$ one can check that $vZ(\Omega)\leqslant vZ(\La_i\Omega)$. Since the latter is trivial we get $vZ(\Omega)=1$ and hence $\Omega$ is icc.  
Let $(\mathcal O_n)_{n\in\mathbb N} $ be a countable enumeration of all the orbits under conjugation by $\La_i$. Denote by $\Omega_k=\langle\mathcal O_1,...,\mathcal{O}_k\rangle \leqslant \Omega$, the subgroup generated by $\mathcal{O}_n $, $n=\overline {1,k}$.  $\Omega_k $'s form an ascending sequence of subgroups normalized by $\La_i$ such that $\Omega=\cup_{k=1}^\infty\Omega_k $. Thus $\Lambda_i\Omega_k$ is an ascending sequence satisfying $\Lambda_i\Omega=\cup_{k=1}^\infty \Lambda_i\Omega_k $.  Since $r(L(\Lambda_i)'\cap M^t)rq \subset L(\La_i\Omega)$ has property (T) there is $k_0\in \mathbb N$  such that \begin{align}\label{eq:CommutantIntertwinesFiniteOrbit}
r(L(\Lambda_i)'\cap M^t)rq\prec_{L(\La_i\Omega)} L(\Lambda_i\Omega_{k_0}).
\end{align}
Next we show the following \begin{claim}\label{extendintertwining} There exists $k\geq k_0 $ such that $qL(\La_i)qr\vee r(L(\Lambda_i)'\cap M^t )rq\prec_{L(\La_i\Omega)} L(\Lambda_i\Omega_k)$. \end{claim}
\noindent\emph{Proof of the Claim \ref{extendintertwining}}. Using Popa's intertwining techniques, \eqref{eq:CommutantIntertwinesFiniteOrbit} implies the existence of  $x_\ell, y_\ell \in L(\Lambda_i\Omega) $, $ \ell=\overline{1,j}$ and $c>0 $ satisfying
\begin{align}\label{eq:IntertwineUnitaryBound}
\sum_{\ell=1}^j \norm{E_{L(\Lambda_i \Omega_{k_0})} (x_\ell u y_\ell) }_2\geq c,
\end{align}
for all $u\in\mathcal{U}(r(L(\Lambda_i)'\cap M^t)rq) $.  Since $\Lambda_i\Omega_k \nearrow\La_i\Omega$  for every $\varepsilon>0 $ there is  $k\geq k_0 $ so that 
\begin{align}\label{eq:ConditionalExpectationCloseToConstants}
\sum_{\ell=1}^j\norm{E_{ L(\Lambda_i\Omega_k) } (x_\ell)-x_\ell  }_2<\varepsilon,\quad \sum_{\ell=1}^j\norm{ E_{L(\Lambda_i\Omega_k)}(y_\ell)-y_\ell  }<\varepsilon.
\end{align}
Using \eqref{eq:IntertwineUnitaryBound} together with inequalities $\|mzn\|_2 \leq \|m\|_\infty \|z\|_2 \|n\|_\infty$ for all $m,n,z \in M^t$ then for all $u\in \mathcal{U}( r(L(\Lambda_i)'\cap M^t)rq ) $ we have
\begin{align*}
c\leq& \sum_{\ell=1}^j \norm{E_{L(\Lambda_i \Omega_{k})} (x_\ell u y_l) }_2\\
	\leq& \sum_{\ell=1}^j  \norm{ E_{L(\Lambda_i \Omega_{k})} ((x_\ell - E_{L(\Lambda_i \Omega_{k})} (x_\ell) ) uy_\ell)}_2+ \sum_{\ell=1}^j\norm{ E_{L(\Lambda_i \Omega_{k})}(E_{L(\Lambda_i \Omega_{k})}(x_\ell) u(y_\ell- E_{L(\La_i \Omega_k)}(y_\ell))}_2\\
	&+\sum_{\ell=1}^j\norm{ E_{L(\Lambda_i \Omega_{k})}(E_{L(\Lambda_i \Omega_{k})}(x_\ell)) uE_{L(\La_i \Omega_k)}(y_\ell)}_2\\
	\leq& \varepsilon \max_{1\leq \ell\leq j}( \norm{y_\ell}_\infty +\norm{x_\ell}_\infty)+\sum_{\ell=1}^j \norm{x_\ell}_\infty \norm{y_\ell}_\infty\norm{ E_{L(\Lambda_i \Omega_{k})}(u)  }_2\\
	\leq& 2 d\varepsilon  + jd^2\norm{ E_{L(\Lambda_i \Omega_{k})}(u) }_2,
\end{align*}
where $d:=\max_{1\leq \ell \leq j}\set{\norm{x_\ell}_\infty, \norm{y_\ell}_\infty} $. This shows there is $k\geq k_0 $ so that $\norm{E_{L(\Lambda_i \Omega_{k_0})}(u)}_2\geq \frac{c - 2\varepsilon d}{jd^2}$ for all $u\in\mathcal{U}(r(L(\Lambda_i)'\cap M^t)rq) $. Letting $\varepsilon=\frac{c}{4d}$ then for all $u\in \mathcal U(r(L(\Lambda_i)'\cap M^t)rq)$  we have 
\begin{align*}
\norm{E_{L(\Lambda_i \Omega_{k})}(u)}_2\geq \frac{c}{2jd^2}>0.
\end{align*}
 This implies for all $a\in \mathcal{U}(qL(\Lambda_i)qr)$ and $u\in \mathcal U(r(L(\Lambda_i)'\cap M^t)rq)$ we have  
\begin{align}\label{eq:UnitaryLambaIntertwines}
\norm{E_{L(\Lambda_i \Omega_{k})}(au)}_2=\norm{aE_{L(\Lambda_i \Omega_{k})}(u)}_2=\norm{E_{L(\Lambda_i \Omega_{k})}(u)}_2\geq \frac{c}{2jd^2}.
\end{align}
As $\mathcal{U}(qL(\Lambda_i)qr)\mathcal{U}(r(L(\Lambda_i)'\cap M^t)rq) $ generates $qL(\Lambda_i)qr\vee r(L(\Lambda_i)'\cap M)rq $, \eqref{eq:UnitaryLambaIntertwines} gives the claim. $\hfill\blacksquare$
\vskip 0.04in 
Now, since $q(L(\Lambda_i)qr\vee r(L(\Lambda_i)'\cap M)rq\subseteq rqL(\Lambda_i\Omega)rq $ is a  finite index inclusion, then $rq L(\Lambda_i\Omega)rq \prec_{L(\La_i\Omega)} (L(\Lambda_i \Omega_k)$ and hence $ L(\Lambda_i\Omega) \prec_{L(\La_i\Omega)} L(\Lambda_i \Omega_k)$. By \cite[Lemma 2.2]{CI17} it follows that $\La_i\Omega_k\leqslant \La_i\Omega$ has finite index and by increasing $k$ we can assume that $\La_i\Omega_k=\La_i\Omega$. Let $\La':=C_{\La_i}(\Omega_k)\leqslant \La_i$ and notice $[\La_i:\La']<\infty$. Thus $[\La_i\Omega:\La'\Omega_k]<\infty$ and since $\La_i\Omega$ is icc then $\La'\Omega_k$ is also icc. In particular, we also have $\La'\cap \Omega_k =1$. As $\La'\Omega_k\leqslant \La_i\Omega$ is finite index the  $\La'\Omega_k \cap \Omega \leqslant \Omega$ is also finite index. In particular since $\Omega$ is icc it follows that $\La'\Omega_k \cap \Omega$ is also icc. Letting $\La'': = \La'\cap \Omega$ the above considerations imply that 
$\La'' \Omega_k =\La'\Omega_k \cap \Omega$. This forces $\La''$ to be either trivial or icc. However, since by construction $\La''=vZ(\La'')$ then $\La''=1$. Since $\La'\leqslant \La_i$ finite index it follows that $\La_i$ is icc-by-finite and hence finite-by-icc.    
\vskip 0.03in  

This together with \eqref{commensurability} and \cite[Theorem 4.6]{CdSS17} show there exists $\Sg \leqslant C_\La(\La_i)$ such that $[\La: \La_i\Sg]<\infty$  and $B\cong^{com}_M L(\Sg)$. Also since $\La$ is icc then so are $\La_i$ and $\Sg$. Finally, using \cite[Theorem 4.7]{CdSS17} there is a decomposition $\La=\Psi_i\oplus \Theta_i$, $u_i\in \mathcal U(M^t)$ and $t>0$ such that $u_i A^tu_i^*=u_iL(\G_{I\setminus\{i\}} )^{t}u_i^*= L(\Psi_i)$ and $u_i B^{1/t}u_i^*=u_iL(\G_i )^{t}u_i^*= L(\Theta_i)$.\end{proof}

\begin{theorem}\label{infprodrig}
Let $(\G_n)_{n\in \mathbb N} $ a countable infinite collection of property (T), biexact, weakly amenable, icc groups. Assume that $\Lambda$ is an \emph{arbitrary} group satisfying $L(\oplus_{n} \G_n)=L(\Lambda)$. Then there exists an infinite direct sum decomposition $\La=(\oplus_n \La_n) \oplus A$ where $A$ is either trivial or icc amenable group. Moreover, for each $k\in \mathbb N$ there exist scalars $t_1,...,t_{k+1}>0$ satisfying $t_1 t_2 \cdots t_{k+1}=1$ and a unitary $u\in L(\La)$ so that 
\begin{equation}\label{intertwiningtheparts'}\begin{split}&uL(\G_n)^{t_n}u^*=L(\La_n)\quad \text{ for all }n=\overline{1,k} \text{; and }\\& uL(\oplus_{n\geq k+1} \G_n)^{t_{k+1}}u^*=L(\oplus_{n\geq k+1} \La_n \oplus A).\end{split}
\end{equation}  
\end{theorem}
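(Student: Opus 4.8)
The plan is to bootstrap the two-fold decomposition of Theorem \ref{TwoFoldedProduct} into an infinite one by iterating it along the tail of the tensor product, and then to control the resulting ``limit tail'' via the relative amenability result of Proposition \ref{relamenable}.

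\emph{Step 1 (peeling off the factors one at a time).} Put $\Theta_0:=\Lambda$ and $M=L(\Lambda)=\bar\otimes_{n}L(\G_n)$. Applying Theorem \ref{TwoFoldedProduct} to $(\G_n)_n$ and $\Lambda$ with $i=1$ gives a decomposition $\Lambda=\Lambda_1\oplus\Theta_1$, a scalar $t_1>0$ and $u_1\in\mathcal U(M)$ with $u_1L(\G_1)^{t_1}u_1^*=L(\Lambda_1)$ and $u_1L(\oplus_{n\geq 2}\G_n)^{1/t_1}u_1^*=L(\Theta_1)$. Since $\Lambda$ is icc and $\Theta_1$ is a direct summand, $\Theta_1$ is icc; moreover $L(\Theta_1)\cong L(\oplus_{n\geq 2}\G_n)^{1/t_1}$, so Theorem \ref{TwoFoldedProduct} applies again, now to $(\G_n)_{n\geq 2}$ and $\Theta_1$. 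Iterating produces icc groups $\Lambda_n,\Theta_n$, scalars $t_n>0$ and unitaries $u_n\in\mathcal U(L(\Theta_{n-1}))\subseteq\mathcal U(M)$ so that, for every $k$, $\Lambda=\Lambda_1\oplus\cdots\oplus\Lambda_k\oplus\Theta_k$ and $\Theta_{k-1}=\Lambda_k\oplus\Theta_k$, and, with $u^{(k)}:=u_k\cdots u_1$ and $t_{k+1}:=(t_1\cdots t_k)^{-1}$,
\begin{equation*}
u^{(k)}L(\G_n)^{t_n}(u^{(k)})^*=L(\Lambda_n)\ (n\leq k),\qquad u^{(k)}L(\oplus_{n\geq k+1}\G_n)^{t_{k+1}}(u^{(k)})^*=L(\Theta_k).
\end{equation*}
Here one uses that $L(\Lambda_n)$ commutes with $L(\Theta_n)$ (summands of a direct sum commute), so the later unitaries $u_{n+1},\dots,u_k\in\mathcal U(L(\Theta_n))$ fix $L(\Lambda_n)$, and that $t_1\cdots t_k t_{k+1}=1$.

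\emph{Step 2 (the limit tail is amenable).} Set $A:=\bigcap_k\Theta_k\leqslant\Lambda$; each $\Theta_k$ is a direct summand of $\Lambda$, hence normal, so $A\trianglelefteq\Lambda$, and $\bigcap_k L(\Theta_k)=L(A)$. The $\Lambda_n$'s, being direct summands of $\Lambda$, pairwise commute, commute with $A$, and have pairwise trivial intersection; since $A\leqslant\Theta_k$ while $\Theta_k$ meets $\Lambda_1\oplus\cdots\oplus\Lambda_k$ trivially, also $A\cap(\oplus_n\Lambda_n)=1$. Thus $(\oplus_n\Lambda_n)\oplus A$ is an internal direct sum inside $\Lambda$, with $L\big((\oplus_n\Lambda_n)\oplus A\big)=\big(\bigvee_n L(\Lambda_n)\big)\bar\otimes L(A)$. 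Because each $\G_n$, hence each $L(\Lambda_n)$, has property (T), a standard localization argument places each $L(\Theta_k)$, up to unitary conjugacy in $M$ and after absorbing its amplification into finitely many of the tensor factors of $\bar\otimes_n L(\G_n)$ (enlarging $M$ by an extra copy of $\mathcal R$ if needed), inside a tail $\bar\otimes_{n\geq j_k}L(\G_n)$ with $j_k\to\infty$; as $L(A)\subseteq L(\Theta_k)$ for all $k$, Proposition \ref{relamenable} (with trivial $B$) yields that $L(A)$ is amenable, i.e. $A$ is amenable.

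\emph{Step 3 (exhaustion and conclusion).} It remains to show $\Lambda=(\oplus_n\Lambda_n)\oplus A$. Using that $L(\Theta_k)$ is a factor, one has $L(\Lambda_1\oplus\cdots\oplus\Lambda_k)'\cap M=L(\Theta_k)$; combined with $\bigvee_n L(\Lambda_n)\supseteq L(\Lambda_1\oplus\cdots\oplus\Lambda_k)$ and $\bigcap_k L(\Theta_k)=L(A)$, the desired equality reduces to $M=\big(\bigvee_n L(\Lambda_n)\big)\bar\otimes L(A)$ (and then $\Lambda=(\oplus_n\Lambda_n)\oplus A$ because the right-hand algebra is $L((\oplus_n\Lambda_n)\oplus A)$ for a subgroup of $\Lambda$). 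This tensor identity is the crux: the telescoping group data alone does not force it — an element of $\Lambda$ could a priori be ``spread across infinitely many'' $\Lambda_n$'s — and it is precisely the property (T) of the $L(\Lambda_n)$'s (a rigid subalgebra cannot be supported arbitrarily far out in an infinite tensor product) that excludes this. Granting exhaustion, $L(\Theta_k)=L(\oplus_{n\geq k+1}\Lambda_n\oplus A)$, so Step 1 furnishes exactly the amplification identities of \eqref{intertwiningtheparts'} with $u=u^{(k)}$; and $A$ is trivial or icc, since for $a\in A\setminus\{1\}$ conjugation by $\gamma=\lambda_1\cdots\lambda_k b\in\Lambda$ ($\lambda_i\in\Lambda_i$, $b\in A$) agrees with conjugation by $b$, whence $a^\Lambda=a^A$ is infinite.

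The main obstacle is the exhaustion $M=\big(\bigvee_n L(\Lambda_n)\big)\bar\otimes L(A)$ in Step 3; I expect to attack it by first localizing each property (T) algebra $L(\Lambda_1\oplus\cdots\oplus\Lambda_k)$ to a finite stretch of tensor factors of $\bar\otimes_n L(\G_n)$ and controlling its relative commutant, thereby forcing the complementary tails $L(\Theta_k)$ to shrink onto $L(A)$ without loss. The amplification bookkeeping needed to apply Proposition \ref{relamenable} in Step 2 is a secondary technical point.
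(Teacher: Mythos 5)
Your Steps 1 and 2 are essentially the paper's own argument: one iterates Theorem \ref{TwoFoldedProduct} to produce nested splittings $\La=\La_1\oplus\cdots\oplus\La_k\oplus\Theta_k$ with the stated amplification identities, sets $A=\bigcap_k\Theta_k$, and deduces amenability of $L(A)$ from Proposition \ref{relamenable} after absorbing the amplification of the tail into one extra tensor factor (the same bookkeeping as in Claim \ref{relamen}). Your observation that $A$ is trivial or icc, granted exhaustion, also matches, and your ordering $u^{(k)}=u_k\cdots u_1$ of the conjugating unitaries is the correct one.

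The issue is Step 3. You rightly note that the exhaustion $\La=(\oplus_n\La_n)\oplus A$ is not a formal consequence of the telescoping group decompositions: an element of $\La$ could a priori have nontrivial $\La_n$-components for infinitely many $n$ (any group strictly between $\oplus_n\La_n$ and $\prod_n\La_n$ satisfies every finite-stage splitting $\La=\La_1\oplus\cdots\oplus\La_k\oplus\Theta_k$, possibly with $\bigcap_k\Theta_k$ trivial), and at the algebra level the point is exactly whether $\bigl(\vee_kL(\La_1\oplus\cdots\oplus\La_k)\bigr)\vee\bigl(\bigcap_kL(\Theta_k)\bigr)$ exhausts $M$, a join/meet interchange that is not automatic. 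But you then leave this step unproven, offering only the intention to ``localize the property (T) algebras $L(\La_1\oplus\cdots\oplus\La_k)$''. That sketch does not obviously gain anything: by construction each $L(\La_1\oplus\cdots\oplus\La_k)$ is already unitarily conjugate to $\bar\otimes_{n\le k}L(\G_n)^{t_n}$, and the difficulty is precisely that the conjugating unitaries vary with $k$. For comparison, the paper at this point reads $\La=(\oplus_n\La_n)\oplus A$ directly off the relations $\Theta_{n-1}=\La_n\oplus\Theta_n$ together with $A=\bigcap_n\Theta_n$, without invoking property (T) there. As submitted, your argument establishes the theorem only modulo this identity, which you yourself name as the crux; the proposal is therefore incomplete at its decisive step.
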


\begin{proof} Using Theorem \ref{TwoFoldedProduct} there exist a product decomposition $\La=\La_1 \oplus \Theta_1$, $v_1\in \mathcal U(M)$, and $t_1>0$ such that   $v_1L(\G_1)^{t_1}v_1^* =L(\La_1)$ and $v_1L(\G_{\mathbb N\setminus \{1 \}})^{t/t_1}v^*_1=L(\Theta_1)$. Applying Theorem \ref{TwoFoldedProduct} again in the last relation for the group $\G_{\mathbb N\setminus \{1\} }$ there exist a product decomposition $\Theta_1=\Lambda_2\oplus \Theta_2$, $v_2\in\mathcal U(v_1L(\G_{\mathbb N\setminus \{1 \}})^{t/t_1}v^*_1)$ , and $t_2>0$ such that  $v_2L(\G_2)^{t_2}v_2^* =L(\La_2)$ and $v_2L(\G_{\mathbb N\setminus \{1,2 \}})^{t/(t_1t_2)}v^*_2=L(\Theta_2)$.  Proceeding inductively one has $\Theta_{n-1}=\Lambda_n\oplus \Theta_n$, a unitary $v_n\in \mathcal U(v_{n-1}L(\G_{\mathbb N\setminus \overline{1,n-1} })^{t/(t_1t_2\cdots t_{n-1})}v^*_{n-1})$ and $t_n>0$ such that  $v_nL(\G_n)^{t_n}v_n^* =L(\La_n)$ and $v_nL(\G_{\mathbb N\setminus \overline{1,n} })^{t/(t_1t_2\cdots t_n)}v^*_n=L(\Theta_n)$. Altogether these relations show that $\Theta_n\geqslant \Theta_{n+1}$ for all $n$ and also  $\La=\oplus_\mathbb N\La_n \oplus \Sigma$, where $A =\cap_n \Theta_n$. In addition, for every $k\in \mathbb N$ letting $u_k:=v_1v_2\cdots v_k$ we see that  \begin{equation}\label{intertwiningtheparts}\begin{split}
&u_k L(\G_i)^{t_i}u^*_k=L(\La_i)\text{ for all }i=\overline{1,k} \text{ and}\\ & u_kL(\G_{\mathbb N\setminus \overline{1,k}})^{t/(t_1t_2\cdots t_k)} u^*_k=L(\oplus_{i\geq k+1} \La_i \oplus A).
\end{split} 
\end{equation}  Since $L(\G_k)$ is a II$_1$ factor the second relation in \eqref{intertwiningtheparts} show that for each $k\in \mathbb N$ one can find $u_k\in \mathcal U(M)$ such that  $u_k^*L(A)u_k\subseteq L(\G_{\mathbb N \setminus \overline{1,k}})^{t/(t_1t_2\cdots t_k)}$. Using Proposition \ref{relamenable} and the same argument as in the proof of Claim \ref{relamen} if follows that $A$ is icc amenable as desired. \end{proof}

\begin{Remarks} We conjecture that  Theorem \ref{infprodrig} still holds true without the property (T) assumption on the $\G_i$'s. We point out that property (T) was used in the proof of Theorem \ref{TwoFoldedProduct} only to derive relation  \eqref{eq:CommutantIntertwinesFiniteOrbit}; in other words the (increasing) sequence of subgroups $\Omega_k$ becomes stationary. We believe this conclusion can still be achieved without the property (T) assumption.  However at this time we are unable to prove this.
\end{Remarks}

\noindent \emph{Proof of Corollary \ref{maincor1}}. First we argue that the group $\G=\oplus_n \G_n$ has trivial amenable radical. So let $B\lhd \G$ be a normal amenable subgroup. Thus the von Neumann subalgebra $L(B)\subseteq L(\G)= L(\oplus_{n\neq k}\G_k)\bar\otimes L(\G_k)$ is regular and amenable. Applying \cite[Theorem 1.4]{PV12} it follows that $L(B)\prec L(\oplus_{n\neq k}\G_n)$. Since $B$ is normal $\G$ we further deduce from \cite[Lemma 2.2]{CI17} that $[B: B_k]<\infty$ where $B_k:= B\cap (\oplus_{n\neq k}\G_n)<\oplus_{n\neq k}\G_n$. Since $B_k\lhd\G$ is normal it follows that $B/B_k \lhd \G/B_k$ is a finite normal subgroup. As $\G/B_k= (\oplus_{n\neq k}\G_n/ B_k) \oplus \G_k$, if $\pi_k: \G/B_k\ra \G_k$ is the canonical projection map  it follows that $\pi_k(B/B_k)\lhd \G_k$ is a finite normal subgroup. As $\G_k$ is icc we have  $\pi_k(B/B_k)=1$ and hence $B/B_k <  \oplus_{n\neq k}\G_n/B_k$; in particular, $B=B_k<  \oplus_{n\neq k}\G_n$. Since this holds for every positive integer $k$ then $B <\cap_k  (\oplus_{n\neq k}\G_n)=1$, thus giving the desired claim.

\cite[Theorem 1.3]{BKKO14} implies that  the reduced $C^*$-algebra $\mathbb C^*_r(\G)$ has the unique trace property. Letting $\phi: \mathbb C^*_r(\G)\ra  \mathbb C^*_r(\Lambda)$ be a $\ast$-isomorphism of $C^*$-algebras it follows that $\phi$ lifts to a $\ast$-isomorphism  $\phi: L(\G)\ra  L(\Lambda)$ of von Neumann algebras. By Theorem \ref{infprodrig} we have that $\La= (\oplus_n \La_n)\oplus A$ with $A$ icc amenable; moreover, the corresponding relations (\ref{intertwiningtheparts'}) also hold. Since $\mathbb C^*_r(\Lambda)$ has the unique trace property then \cite[Theorem 1.3]{BKKO14} implies that $A=1$ and the first part of the conclusion is proved. The remaining part of the conclusion follows directly from relations \ref{intertwiningtheparts'}.  $\hfill\square$


\section{Proof of Theorem \ref{main3}}




To ease our exposition we first introduce the following notation:  
\begin{notation}\label{wreathprod} 
 Let $H_0, \G$ be icc groups such that $H_0$ has property (T) and $\G$ admits an infinite, almost normal subgroup $\G_0\leqslant\G$ with relative property (T). Let $\Gamma \curvearrowright I$ be an action on a countable set $I$ satisfying the following conditions:
\begin{enumerate}
\item [a)]  For each $i \in I$ we have $[\G:{\rm Stab}_\G(i)]<\infty$;
\item [b)] There is $k\in \mathbb N$ such that for each $J\subseteq I$ with  $|J|\geq k$ we have $|{\rm Stab}_\G(J)|<\infty$.
\end{enumerate}
Denote by $G=H_0 \wr_I \G$ the corresponding generalized wreath product.
Denote by $M=L(G)$ and assume that  $M=L(\Lambda) $ for an arbitrary group $\La$.  Let $\Delta: M\to M\bar\otimes M $ be the commultiplication along $ \Lambda$, i.e.\ $\Delta(v_\lambda)=v_\lambda\otimes v_\lambda$, where $\{v_\lambda\}_{\lambda\in \Lambda} $ are the canonical unitaries generating $L(\Lambda)$.  
\end{notation}

\begin{prop}\label{semidirprodrig1} Assume Notation \ref{wreathprod}. Then the following hold: \begin{enumerate}
\item [c)]\label{1401}$\Delta(L(H_0^{(I)})) \prec_{M\otimes M}^s L(H_0^{(I)})\bar{\otimes} L(H_0^{(I)})$;
\item [d)]There exists $ u\in \mathcal U(M\bar\otimes M)$ such that $u \Delta(L(\G) )u^*\subseteq L(\G \times \G).$  
\end{enumerate}
\end{prop}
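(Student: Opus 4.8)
The plan is to obtain both parts from Popa's intertwining machinery applied to the comultiplication $\Delta$, exploiting the rigidity coming from property (T) of $H_0$ and the relative property (T) of $\G_0 \leqslant \G$, together with the combinatorial constraints a) and b) on the action $\G \curvearrowright I$. I would first treat c). The key observation is that $L(H_0^{(I)}) \subseteq M$ is the von Neumann algebra generated by a wreath-product base, hence it sits inside $M = L(H_0^{(I)}) \rtimes \G$ as a regular subalgebra with a very specific structure; moreover $\Delta(L(H_0^{(I)}))$ is a property (T) subalgebra of $M \bar\otimes M$ since $H_0^{(I)}$ inherits property (T) from $H_0$ (a direct sum of copies of a property (T) group, as a $\G$-group this is fine — actually one must be a touch careful: $H_0^{(I)}$ itself need not have property (T) as an abstract group when $I$ is infinite, so I would instead work factor-by-factor, intertwining $\Delta(L(H_0^{(F)}))$ for finite $F \subseteq I$ and then assembling). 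Concretely, $M \bar\otimes M = (L(H_0^{(I)}) \bar\otimes L(H_0^{(I)})) \rtimes (\G \times \G)$ and the target $L(H_0^{(I)}) \bar\otimes L(H_0^{(I)})$ is the base of this wreath-type crossed product. Applying the dichotomy of \cite[Theorem 1.6]{PV12} (or a Bernoulli-shift intertwining argument in the spirit of \cite{IPV10,Io11,KV15}) to the amenable — in fact rigid — subalgebra $\Delta(L(H_0^{(F)}))$, either it intertwines into $L(H_0^{(I)}) \bar\otimes L(H_0^{(I)})$, or its normalizer is amenable relative to the base; the latter is impossible because the normalizer contains $\Delta(M)$, whose relative commutant is trivial ($\La$ icc) and which is non-amenable, contradicting \cite[Theorem 7.2]{IPV10}. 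This gives the intertwining for each finite $F$; then a standard maximality/separability argument (as in \cite[Appendix]{BO08} and the proof of Proposition \ref{firstintertwining}) upgrades it to $\Delta(L(H_0^{(I)})) \prec L(H_0^{(I)}) \bar\otimes L(H_0^{(I)})$, and triviality of the center of the relative commutant (again using $\La$ icc) promotes it to the strong intertwining $\prec^s$.

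Next I would deduce d) from c). Having $\Delta(L(H_0^{(I)})) \prec^s_{M \bar\otimes M} L(H_0^{(I)}) \bar\otimes L(H_0^{(I)})$, I would pass to quasinormalizers. Since $L(\G)$ normalizes $L(H_0^{(I)})$ inside $M$, we have $\Delta(L(\G)) \subseteq \mathcal N_{M \bar\otimes M}(\Delta(L(H_0^{(I)})))''$; more usefully, $\Delta(L(H_0^{(I)} \rtimes \G)) = \Delta(M)$ sits in the quasinormalizer, and one can run the standard argument (cf.\ \cite{IPV10,CdSS16}) that a strong intertwining of the base forces a conjugacy-type statement for the normalizing pieces. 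The precise route: from $\prec^s$ one produces a unitary $u \in \mathcal U(M \bar\otimes M)$, projections, and a $\ast$-isomorphism identifying a corner of $\Delta(L(H_0^{(I)}))$ with a corner of $L(H_0^{(I)}) \bar\otimes L(H_0^{(I)})$; then one shows, using that the relative commutant of $L(H_0^{(I)}) \bar\otimes L(H_0^{(I)})$ in $M \bar\otimes M$ is itself (generated by $L(\G \times \G)$ together with a center piece which is trivial here), that $u \Delta(L(\G)) u^*$ normalizes $L(H_0^{(I)}) \bar\otimes L(H_0^{(I)})$, hence $u\Delta(L(\G))u^* \subseteq \mathcal N_{M\bar\otimes M}(L(H_0^{(I)})\bar\otimes L(H_0^{(I)}))'' = L(H_0^{(I)}\bar\otimes H_0^{(I)}) \rtimes (\G\times\G)$; and then a further height/quasinormalizer argument (invoking \cite[Theorem 2.1 and Corollary 2.3]{JGS10} for the computation of quasinormalizers of inclusions of group von Neumann algebras, plus the fact that $\Delta(L(\G))$ is ``orthogonal'' to the base because $\Delta$ sends group elements to group elements) cuts out the base summand and lands $u\Delta(L(\G))u^*$ inside $L(\G\times\G)$. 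The relative property (T) of $\G_0\leqslant\G$ enters here to guarantee rigidity of $\Delta(L(\G_0))$, pinning down where the group-like unitaries $\Delta(v_g)$ can go and preventing them from spreading out into the amenable base; the almost-normality of $\G_0$ and conditions a), b) are what make this pinning compatible with passing from $\G_0$ to all of $\G$.

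The main obstacle I anticipate is precisely in part d): upgrading the algebra-level intertwining of the base (part c)) to an honest unitary conjugacy that simultaneously controls $\Delta(L(\G))$. The difficulty is that $\prec^s$ only gives intertwining on corners, and one must carefully manage projections, amplifications, and the relative commutant of the base inside $M\bar\otimes M$ — which is not simply $L(\G\times\G)$ but involves the $\G\times\G$-action and could a priori have a complicated structure — to conclude that the conjugating unitary can be chosen to normalize the base globally (not just intertwine a corner). This is exactly the kind of step that in \cite{IPV10} and \cite{CdSS16} requires the ``height'' technology; here I would use Lemmas \ref{perturbationheight1} and \ref{perturbationheight2} from the Preliminaries to transfer positivity of height under $\Delta$ and under unitary conjugation, combined with the quasinormalizer computations of \cite{JGS10}, to force $u\Delta(L(\G))u^*$ into the group-von-Neumann-algebra part. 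A secondary subtlety is the finite-versus-infinite $I$ issue in part c): since $H_0$ has property (T) but $H_0^{(I)}$ may not when $I$ is infinite, the intertwining must genuinely be assembled from the finite-support pieces, and one must check that the assembly (via a maximal intertwining projection argument) does not lose the strong form $\prec^s$ — this uses the factoriality/iccness hypotheses in an essential way.
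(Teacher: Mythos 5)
There are genuine gaps in both halves of your plan. For part c), the tool you lead with, \cite[Theorem 1.6]{PV12}, is not available under Notation \ref{wreathprod}: no weak amenability or biexactness is assumed on $\G$ here (only relative property (T) for an almost normal $\G_0\leqslant\G$), and in any case that dichotomy applies to \emph{amenable} subalgebras, whereas $\Delta(L(H_0^{(F)}))$ is non-amenable. The correct engine --- which you only mention parenthetically --- is \cite[Theorem 4.2]{IPV10}, the location theorem for \emph{rigid} subalgebras of generalized Bernoulli crossed products. The paper applies it to the single-coordinate inclusion $\Delta(L(H_0))\subset M\bar\otimes M=M\bar\otimes(A\rtimes\G)$, which is rigid because $H_0$ has property (T); since the quasi-normalizer $P$ of $\Delta(L(H_0))$ contains all of $\Delta(A)$ (where $A=L(H_0^{(I)})$), the resulting trichotomy controls the entire base algebra at once, and conditions a), b) together with \cite[Lemmas 4.1.3 and 7.2.2]{IPV10} eliminate the bad cases and force $\Delta(A)\prec M\bar\otimes A$, then $\prec^s A\bar\otimes A$ by the maximal-projection argument you correctly sketch. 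Your alternative of intertwining each $\Delta(L(H_0^{(F)}))$ for finite $F$ and ``assembling'' does not work: intertwining of the members of an increasing family of subalgebras does not pass to the union (the intertwining projections can degenerate), so this step would fail as stated.

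For part d), your plan to \emph{derive} the unitary conjugacy from c) via quasinormalizer computations and the height lemmas has a gap at exactly the point you flag as the main obstacle: $\Delta(A)\prec^s A\bar\otimes A$ does not produce a single unitary conjugating anything globally, and the observation that $u\Delta(L(\G))u^*$ normalizes the base is vacuous since $\mathcal N_{M\bar\otimes M}(A\bar\otimes A)''=M\bar\otimes M$. The paper's route is different and more direct: it runs \cite[Theorem 4.2]{IPV10} a \emph{second} time, now on the rigid inclusion $\Delta(L(\G_0))\subset M\bar\otimes M$ (rigidity coming from relative property (T), with $\Delta(L(\G))$ contained in the quasi-normalizer of $\Delta(L(\G_0))$ by almost normality). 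Part c) is used only to exclude the case $P\prec M\bar\otimes(A\rtimes{\rm Stab}_\G(i))$ (which would give $\Delta(M)\prec M\bar\otimes(A\rtimes{\rm Stab}_\G(i))$ and hence a finite-index contradiction via \cite[Lemma 7.2.2]{IPV10}); the surviving case of the trichotomy \emph{is} the statement that a unitary conjugates the quasi-normalizer into $M\bar\otimes L(\G)$, and one iteration in the second tensor factor lands $u\Delta(L(\G))u^*$ inside $L(\G\times\G)$. The height lemmas play no role in this proposition; they enter only later, in the identification of the group unitaries.
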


\begin{proof}
We denote $A_0=L(H_0)$ and $A=A_0^{(I)}$. Note that $M=A\rtimes\G$, the action being given by generalized Bernoulli shifts. Write $M=L(\La)$ and denote by $\Delta:M\to M\bar{\otimes} M$ the associated co-multiplication. Note that $M\bar{\otimes} M=(A \bar{\otimes} A)\rtimes (\G\times\G)$.

The inclusion $\Delta(A_0) \subset M\bar{\otimes}M=M\bar{\otimes}(A\rtimes \G)$ is rigid. Denote by $P\subset M\bar{\otimes}M$ the quasi-normalizer of $\Delta(A_0)$. Note that $\Delta(A)\subset P$. By applying \cite[Theorem 4.2]{IPV10}, we see that one of the following has to hold:
\begin{enumerate}
\item $\Delta(A_0)\prec_{M\bar\otimes M} M\otimes 1$;
\item $P\prec_{M\bar\otimes M} M\bar{\otimes}(A \rtimes{\rm Stab}_\G(i))$, for some $i\in I$;
\item $v^*Pv\subset M\bar{\otimes}L(\G)$ for some partial isometry $0\neq v \in M\bar{\otimes}M$.
\end{enumerate}
(1) is impossible since $A_0$ is diffuse. Suppose (3) holds. Then by the remark above we have that $v^*\Delta(A)v\subset M\bar{\otimes}L(\G)$. There are two possibilities: either $\Delta(A)\prec_{M\bar \otimes M} M\bar{\otimes}L({\rm Stab}_\G(i))$ for some $i$, or $\Delta(A)\nprec_{M\bar \otimes M} M\bar{\otimes}L({\rm Stab}_\G)$, for all $i\in I$. In the first case we again have two possibilities: either there exists a maximal finite subset $\mathcal G \ni i$ such that $\Delta(A)\prec_{M\bar \otimes M} M\bar{\otimes}L({\rm Stab}_\G(\mathcal G))$, or there is no such subset. If the first sub-case holds then \cite[Lemma 4.1.3]{IPV10} gives that $QN_{M\bar \otimes M}(\Delta(A))''\prec_{M\bar \otimes M} M\bar{\otimes}L({\rm Norm}(\mathcal G))$. Since the quasi-normalizer of $\Delta(A)$ contains $\Delta(M)$, this implies $\Delta(M)\prec_{M\bar \otimes M} M\bar{\otimes}L({\rm Norm}(\mathcal G))$. As ${\rm Stab}_\G(\mathcal G)$ is a finite index subgroup of ${\rm Norm}(\mathcal G)$, it follows that $\Delta(M)\prec_{M\bar \otimes M} M\bar{\otimes}L({\rm Stab}_\G(\mathcal G))$, and hence $\Delta(M)\prec_{M\bar \otimes M} M\bar{\otimes}L({\rm Stab}_\G(i))$ for some $i$, which implies by \cite[Lemma 7.2.2]{IPV10} that $L({\rm Stab}_\G(i))\subset M$ has finite index, which is a contradiction. If the second sub-case holds, by taking $\mathcal G$ with $|\mathcal G|\geq \kappa$ we get that $\Delta(A)\prec_{M\bar\otimes M} M\otimes 1$, a contradiction. It follows that (2) must hold, hence $P\prec_{M\bar\otimes M} M\bar{\otimes}(A \rtimes {\rm Stab}_\G(i))$, which further implies $\Delta(A)\prec_{M\bar\otimes M} M\bar{\otimes}(A \rtimes {\rm Stab}_\G(i))$, for some $i\in I$. Again we have two possibilities: either there exists a finite maximal subset $\mathcal G\subset I$ such that $\Delta(A)\prec_{M\bar\otimes M} M\bar{\otimes}(A\rtimes {\rm Stab}_\G(\mathcal G))$, or it doesn't. In the first sub-case we get, by \cite[Lemma 4.1.3]{IPV10}, that $QN_{M\bar \otimes M}(\Delta(A))''\prec_{M\bar \otimes M} M\bar{\otimes}(A\rtimes Norm(\mathcal G))$ and again as above, that $\Delta(M)\prec_{M\bar \otimes M} M\bar{\otimes}(A\rtimes {\rm Stab}_\G(i))$, for some $i\in \mathcal G$, which by \cite[Lemma 7.2.2]{IPV10} implies that $[M:A\rtimes{\rm Stab}_\G(i)]$ is finite, a contradiction. In the second sub-case, by taking a $\mathcal G$ with $|\mathcal G|\geq \kappa$, we obtain that $\Delta(A)\prec_{M\bar \otimes M} M\bar{\otimes} A$, which is what we wanted. The maximal projection $q\in \Delta(A)'\cap M\bar{\otimes}M$ such that $\Delta(A)q \prec^s_{M\bar \otimes M} M\bar{\otimes}A$ is non-zero and belongs to the center of the normalizer of $\Delta(A)$ in $M\bar{\otimes}M$. This center is contained in $\Delta(M)'\cap M\bar{\otimes}M=\C 1$. It follows that $q=1$, hence $\Delta(A)\prec^s_{M\bar \otimes M} M\bar{\otimes}A$. By symmetry we obtain that also $\Delta(A)\prec^s_{M\bar \otimes M} A\bar{\otimes}M$ and finally that $\Delta(A)\prec^s_{M\bar \otimes M} A \bar{\otimes}A$, showing part c).
\vskip 0.04in
Next we prove part d). First notice from the assumptions that the inclusion $\Delta(L(\G_0))\subset M\bar{\otimes}(A\rtimes\G)$ is rigid. Denote by $P$ the quasi-normalizer of $\Delta(L(\G_0))$ inside $M\bar{\otimes}M$. Note that $P$ contains $\Delta(L(\G))$. We apply again \cite[Theorem 4.2]{IPV10} and we see that one of the following has to hold:
\begin{enumerate}
\item $\Delta(L(\G_0))\prec_{M\bar\otimes M} M\bar{\otimes}1$;
\item $P \prec_{M\bar\otimes M} M\bar{\otimes}(A\rtimes {\rm Stab}_\G(i))$, for some $i\in I$;
\item $vPv^*\subset M\bar{\otimes}L(\G)$, for some $v\in \mathcal U(M\bar{\otimes}M)$.
\end{enumerate}
Note (1) cannot be true because $\Delta(L(\G_0))$ is diffuse. Suppose (2) is true. This implies in particular that $\Delta(L(\G)) \prec_{M\bar\otimes M} M\bar{\otimes}(A \rtimes {\rm Stab}_\G(i))$. But since $\Delta(A)\prec^s_{M\bar\otimes M} A\bar{\otimes}A$, by the same argument as in the beginning of the proof of \cite[Theorem 8.2]{Io10}, we would get $\Delta(A\rtimes\G)=\Delta(M)\prec_{M\bar\otimes M} M\bar{\otimes}(A\rtimes {\rm Stab}_\G(i))$, which by  \cite[Lemma 7.2.2]{IPV10} implies that $A\rtimes {\rm Stab}_\G(i) \subset M$ has finite index, a contradiction. So (3) must be true, hence a fortiori $v\Delta(L(\G))v^*\subset M\bar{\otimes}L(\G)$. Repeating the argument for the inclusion $v\Delta(L(\G))v^*\subset M\bar{\otimes}L(\G)=(A\rtimes\G)\bar{\otimes}L(\G)$, we obtain an unitary $u\in M\bar{\otimes}M$ such that $u\Delta(L(\G))u^*\subset L(\G)\bar{\otimes}L(\G)$, as desired.\end{proof}

\begin{prop}\label{semidirprodrig1'} Assume Notation \ref{wreathprod}. In addition assume that $H_0$, $\G$ and $\La$ are torsion free groups. Then the following hold: \begin{enumerate}
\item [e)]\label{1401}$\Delta(L(H_0^{(I)})) \prec_{M\otimes M}^s L(H_0^{(I)})\bar{\otimes} L(H_0^{(I)})$;
\item [f)]There exists $ w\in \mathcal U(M\bar\otimes M)$ such that $w \Delta(\G)w^*\subseteq \mathbb T(\G \times \G).$  
\end{enumerate}
\end{prop}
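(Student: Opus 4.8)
\emph{Part (e)} requires no new work: it is verbatim part (c) of Proposition \ref{semidirprodrig1}, whose proof nowhere uses torsion-freeness. So the plan concerns only part (f), and the strategy is to promote the algebra-level containment of Proposition \ref{semidirprodrig1}(d) to the group-level statement of (f) through the height/comultiplication machinery of \cite{IPV10} (see also \cite{KV15}). Throughout I keep the notation $A_0=L(H_0)$, $A=A_0^{(I)}$, so that $M=A\rtimes\G=L(\La)$ and $M\bar\otimes M=(A\bar\otimes A)\rtimes(\G\times\G)$.

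First I would invoke Proposition \ref{semidirprodrig1}(d) to fix $u\in\mathcal U(M\bar\otimes M)$ with $u\Delta(L(\G))u^*\subseteq L(\G\times\G)$, and set $\Sg:=\{\,u\Delta(u_\g)u^*\mid\g\in\G\,\}$, a subgroup of $\mathcal U(L(\G\times\G))$ isomorphic to $\G$. The heart of the matter is the uniform height estimate
\begin{equation*}
\inf_{\g\in\G} h_\La(u_\g)>0 ,
\end{equation*}
which I would establish along the lines of \cite[Sections 3--4]{IPV10}: starting from part (e), i.e.\ $\Delta(A)\prec^{s}_{M\bar\otimes M}A\bar\otimes A$, one exploits the malleability and spectral-gap rigidity of the generalized Bernoulli action $\G\curvearrowright A$, the combinatorial hypotheses a)--b) on $\G\curvearrowright I$, the (relative) property (T) assumptions, and the elementary manipulations of Lemmas \ref{perturbationheight1} and \ref{perturbationheight2}, to bound the $\La$-Fourier coefficients of $u_\g$ below uniformly in $\g$. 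Since $\Delta(v_\la)=v_\la\otimes v_\la$, a direct computation of Fourier coefficients gives $h_{\La\times\La}(\Delta(u_\g))=h_\La(u_\g)$, so the estimate is equivalent to $\inf_\g h_{\La\times\La}(\Delta(u_\g))>0$, hence, by the conjugation invariance in Lemma \ref{perturbationheight1}, to $h_{\La\times\La}(\Sg)>0$.

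With this in hand I would run the group-reconstruction argument of \cite[Theorem 3.1]{IPV10}, applied to the comultiplication $\Delta$ and the subgroup $\G\leqslant G$: the positivity of $h_\La$ on $\{u_\g\}_{\g\in\G}$ together with the containment $u\Delta(L(\G))u^*\subseteq L(\G\times\G)$ produces a unitary $w\in\mathcal U(M\bar\otimes M)$ (and a character $\eta:\G\to\mathbb T$) such that $w\Delta(u_\g)w^*\in\mathbb T(\G\times\G)$ for every $\g\in\G$; the fact that the conjugate lands inside $\mathbb T(\G\times\G)$, and not merely inside $\mathbb T(G\times G)$, is forced by $u\Delta(L(\G))u^*\subseteq L(\G\times\G)$ since $L(\G\times\G)\cap\mathbb T(G\times G)=\mathbb T(\G\times\G)$. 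This is exactly part (f). The torsion-freeness of $\La$, hence of the groups produced, enters precisely here: it removes the finite-index/finite-kernel defect that \cite{IPV10} otherwise allows, yielding the containment in $\mathbb T(\G\times\G)$ on the nose.

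The step I expect to be the main obstacle is the uniform height bound $\inf_{\g\in\G}h_\La(u_\g)>0$: it is the generalized-wreath analogue of the core height computation of \cite{IPV10}, and the delicate point is propagating the spectral-gap rigidity of the Bernoulli action $\G\curvearrowright A$ through the combinatorial hypotheses a)--b) on $\G\curvearrowright I$. Once that is in place, the remaining steps are essentially quotations of \cite{IPV10,KV15}.
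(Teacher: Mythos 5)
Your treatment of part (e) is correct, and the broad shape of your plan for (f) --- a positive height bound followed by the group-reconstruction machinery of \cite{IPV10,KV15} --- is the right one. However, the height estimate you single out as ``the heart of the matter'' is not the one the argument needs, and the mechanism you propose for proving it is also not the right one. The quantity that drives the proof is $h_{\G\times\G}(\mathcal G)>0$ for $\mathcal G=\{u\Delta(u_\g)u^*\mid \g\in\G\}\subset\mathcal U(L(\G\times\G))$, i.e.\ the height computed with respect to the \emph{known} group $\G\times\G$, not $h_\La(u_\g)$ computed with respect to the mystery group $\La$. This bound does not come from malleability or spectral gap of the Bernoulli action; it comes from the Fourier-coefficient argument in Step 5 of the proof of \cite[Theorem 5.1]{IPV10}: $\mathcal G$ normalizes $u\Delta(A)u^*$, which by part (e) satisfies $u\Delta(A)u^*\prec^s_{M\bar\otimes M}A\bar\otimes A$, and a unitary normalizing a subalgebra that clusters in $A\bar\otimes A$ is forced to have a uniformly large Fourier coefficient over $\G\times\G$. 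With $h_{\G\times\G}(\mathcal G)>0$ in hand one applies \cite[Theorem 4.1]{KV15} \emph{inside} $L(\G\times\G)$ --- after verifying that $\{{\rm Ad}(v)\}_{v\in\mathcal G}$ is weakly mixing on $L^2(L(\G\times\G))\ominus\C 1$ and that $\mathcal G''\nprec_{M\bar\otimes M} L(C_{\G\times\G}(\g_1,\g_2))$ for $(\g_1,\g_2)\neq e$, two substantive checks resting on $\Delta(A)\prec^s A\bar\otimes A$ and \cite[Lemma 7.2.2, Proposition 7.2.3]{IPV10} --- to obtain $w\mathcal Gw^*\subset\mathbb T(\G\times\G)$ directly. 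The group $\La$ plays essentially no role in this step.

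Your route instead passes through $h_\La(\G)>0$ and \cite[Theorem 3.1]{IPV10} applied in $L(\La)$, and this is where the gap lies. First, the logical dependency is reversed: in this paper $h_\La(\G)>0$ is \emph{deduced from} $h_{\G\times\G}(\mathcal G)>0$ via Lemmas \ref{perturbationheight1}--\ref{perturbationheight2} (this is exactly how the later Theorem \ref{semidirprodrig2} proceeds), so you would need the $\G\times\G$ height bound anyway, at which point the detour through $\La$ buys nothing. Second, the quantity your computation actually controls, $h_{\La\times\La}(\Sg)$, is a height relative to $\La\times\La$ and cannot feed a reconstruction theorem whose target is $\mathbb T(\G\times\G)$; to land in $\mathbb T\La$ via \cite[Theorem 3.1]{IPV10} one must first locate a subgroup $\Phi\leqslant\La$ with $L(\Phi)$ unitarily conjugate to $L(\G)$ (Claim \ref{groupidentification} in the proof of Theorem \ref{semidirprodrig2}), and that identification uses the biexactness of the factors of $\G$ and property (T) of $\G$ --- hypotheses that are deliberately absent from Proposition \ref{semidirprodrig1'}, which only assumes the relative property (T) data of Notation \ref{wreathprod}. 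Finally, your guess about torsion-freeness is off target for this proposition: it is not what forces the conjugate into $\mathbb T(\G\times\G)$ rather than $\mathbb T(G\times G)$; that containment comes for free because the whole argument takes place inside $L(\G\times\G)$ from the start.
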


\begin{proof} Since e) follows directly from Proposition \ref{semidirprodrig1} we will only argue for f). From Proposition \ref{semidirprodrig1} there exists $ u\in \mathcal U(M\bar\otimes M)$ such that $u\Delta(L(\G) )u^*\subseteq L(\G \times \G)$.   

Denote by $\mathcal G=\{u\Delta(u_\g)u^*\,|\, \g\in\G\}\subset\mathcal U(L(\G\times\G))$. Since $\mathcal G$ normalizes $u\Delta(A)u^*$, which satisfies $u\Delta(A)u^* \prec^s A\bar{\otimes}A$, the argument in Step 5 of the proof of  \cite[Theorem 5.1]{IPV10} implies that $h_{\G\times\G}(\mathcal G)>0$. We have that $\mathcal G''=u\Delta(L(\G))u^*\nprec_{M\bar\otimes M} L(C_{\G\times\G}(\g_1,\g_2))$, for any $(\g_1,\g_2)\in\G\times\G-\{e\}$. Indeed, suppose this is not true, and $u\Delta(L(\G))u^* \prec L(C_{\G\times\G}((\g_1,\g_2)))=L(C_{\G}(\g_1))\bar{\otimes}L(C_{\G}(\g_2))$, with $\g_2\neq e$. Again using the fact that $\Delta(A)\prec_{M\bar\otimes M} A\bar{\otimes}A$, we infer that $\Delta(M)=\Delta(A\rtimes\G)\prec M\bar{\otimes}(A\rtimes C_{\G}(\g_2))$. \cite[Lemma 7.2.2]{IPV10} then implies that $A\rtimes C_{\G}(\g_2)$ has finite index in $M$, which is a contradiction, since $\G$ is icc. Also, the representation $\{Ad (v)\}_{v\in\mathcal G}$ on $L^2(L(\G\times\G))\ominus\C 1$ is weakly mixing, because it is in fact weakly mixing on $L^2(M\bar{\otimes}M)\ominus \C1$. Indeed, let $\mathcal H\subset L^2(M\bar{\otimes}M)$ be a finite dimensional $\{Ad (v)\}_{v\in\mathcal G}$ invariant subspace. Then $\mathcal H_0=u\mathcal Hu^*$ is a finite dimensional $\{Ad \Delta(u_\g)\}_{\g\in\G}$-invariant subspace of $L^2(M\bar{\otimes}M)$. Denote by $\mathcal K$ the closed linear span of $\mathcal H_0\Delta(M)$. Then $\mathcal K$ is a $\Delta(L\G)-\Delta(M)$ bi-module, which is finitely generated as a right module. Since $L(\G)\nprec_M L(C_{\La}(s))$, for any $s\in\La-\{e\}$, \cite[Proposition 7.2.3]{IPV10} implies that $\mathcal K\subset \Delta(L^2M)$, so in particular $\mathcal H_0 \subset \Delta(L^2M)$. Hence $\Delta^{-1}(\mathcal H_0)\subset L^2M$ is a finite dimensional $\{Ad (u_\g)\}_{\g\in\G}$-invariant subspace. As the inclusion $\G \leqslant H_0^{(I)}\rtimes\G$ is icc, the representation $\{Ad (u_\g)\}_{\g\in\G}$ on $L^2(M)\ominus \C1$ is weakly mixing, which further implies that $\mathcal H=\C1$, as claimed. Now we apply \cite[Theorem 4.1]{KV15} to deduce that there exists a unitary $w\in L(\G\times\G)$ such that $w\mathcal G w^*\subset \mathbb T(\G\times\G)$. By replacing $w$ with $w u$, we may assume that $w\Delta(u_\g)w^*\in\mathbb T(\G\times\G)$ for all $\g\in\G$.\end{proof}

\begin{theorem}\label{strongrigsemidirect1} Let $H_0, \G$ be icc torsion free groups such that $H_0$ has property (T) and $\G$ admits an infinite, almost normal subgroup $\G_0\leqslant\G$ with relative property (T). Let $\Gamma \curvearrowright I$ be a transitive action on a countable set $I$ satisfying the following conditions:
\begin{enumerate}
\item [a)]  For each $i \in I$ we have $[\G:{\rm Stab}_\G(i)]<\infty$;
\item [b)] There is $k\in \mathbb N$ such that for each $J\subseteq I$ satisfying  $|J|\geq k$ we have $|{\rm Stab}_\G(J)|<\infty$;
\item [c)] For every $i\neq j$ we have that $|{\rm Stab}_\G(i)\cdot j|=\infty$. 
\end{enumerate}
Denote by $G=H_0 \wr_I \G$ the corresponding generalized wreath product. Let $\La$ be any torsion free group and let $\theta: L(G)\ra L(\La)$ be a $\ast$-isomorphism. Then $\La$ admits a wreath product decomposition $\La = \Sg_0 \wr_I \Psi$ satisfying the following properties: there exist a group isomorphism $\rho:\G \ra \Psi$, a character $\eta:\G\ra \mathbb T$, a $\ast$-isomorphism $\theta_0: L(H_0)\ra L(\Sg_0)$ and a unitary $v\in L(\La)$ such that for every $x\in L(H_0^{(I)})$ and $\g\in \G$ we have \begin{equation*}
\theta(x u_\g) =\eta(\g)v^* \theta^{\bar\otimes I}_0(x) v_{\delta(\g)}v.\end{equation*} 
Here $\{u_\g \,|\, \g\in \G\}$ and $\{v_\la \,|\, \la\in \Psi\}$ are the canonical unitaries of $L(\G)$ and $L(\Psi)$, respectively. \end{theorem}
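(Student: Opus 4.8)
\emph{Strategy.} After transporting everything by $\theta$ we may assume $L(G)=L(\La)=:M$ and $\theta=\mathrm{id}$; write $A:=L(H_0^{(I)})$, so that $M=A\rtimes\G$ with $\G$ acting by the generalized Bernoulli shift induced by $\G\ca I$, and let $\Delta\colon M\to M\bar\otimes M$ be the comultiplication along $\La$ of Notation~\ref{wreathprod}. The plan is to run the comultiplication scheme of \cite[Section~5]{IPV10}: transport the rigidity statements of Proposition~\ref{semidirprodrig1'} from $M\bar\otimes M$ down to $M=L(\La)$ via the height lemmas of the preliminary section, then use the outcome to conjugate the subgroup $\G\leqslant G$ into $\mathbb T\La$ inside $\mathcal U(M)$, and finally reconstruct the wreath-product structure of $\La$ from this together with part~(e) of Proposition~\ref{semidirprodrig1'}.

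\emph{Step 1 (positive height over $\La$).} Proposition~\ref{semidirprodrig1'} supplies (e) $\Delta(A)\prec^{s}_{M\bar\otimes M}A\bar\otimes A$, and (f) a unitary $w\in\mathcal U(M\bar\otimes M)$ with $w\Delta(u_\g)w^*\in\mathbb T(\G\times\G)$ for all $\g\in\G$. Each $w\Delta(u_\g)w^*$ being a unimodular multiple of a canonical unitary of $L(\G\times\G)$, we get $h_{\G\times\G}(w\Delta(\{u_\g:\g\in\G\})w^*)=1$, and then Lemma~\ref{perturbationheight2} applied with $\mathcal S=\{u_\g:\g\in\G\}$, $x=w$, $y=w^*$ gives $h_\La(\{u_\g:\g\in\G\})>0$.

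\emph{Step 2 (conjugating $\G$ into $\mathbb T\La$).} To upgrade positive height to a genuine conjugacy I would first record two genericity facts: (i) the representation $\{\mathrm{Ad}(u_\g)\}_{\g\in\G}$ on $L^2(M)\ominus\C 1$ is weakly mixing, since the inclusion $\G\leqslant G=H_0^{(I)}\rtimes\G$ is icc (handling $\ell^2(G)\ominus\ell^2(\G)$) and $\G$ itself is icc (handling $\ell^2(\G)\ominus\C 1$); and (ii) $L(\G)\nprec_M L(C_\La(\la))$ for every $\la\in\La\setminus\{e\}$, as observed inside the proof of Proposition~\ref{semidirprodrig1'}. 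With Step~1 and (i)--(ii) at hand, \cite[Theorem~4.1]{KV15} (the group-von-Neumann-algebra counterpart of \cite[Theorem~3.1]{IPV10}) produces a unitary $v\in\mathcal U(M)$, a character $\eta\colon\G\to\mathbb T$ and a homomorphism $\delta\colon\G\to\La$ with $vu_\g v^*=\eta(\g)\,v_{\delta(\g)}$ for all $\g\in\G$. Faithfulness of $\{u_\g\}$ and icc-ness of $\G$ force $\delta$ injective; set $\Psi:=\delta(\G)\leqslant\La$, so that $\delta\colon\G\to\Psi$ is an isomorphism and $vL(\G)v^*=L(\Psi)$.

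\emph{Step 3 (reconstruction) and the main obstacle.} Put $A':=vAv^*$. Then $M=A'\vee L(\Psi)$, $A'$ is regular in $M$ and normalized by $\{v_\la:\la\in\Psi\}$, and $A'\cap L(\Psi)=v(A\cap L(\G))v^*=\C 1$, so $M=A'\rtimes_\beta\Psi$ for the conjugated generalized Bernoulli action $\beta$. Transporting part~(e) by $v$ and combining it with the reconstruction arguments of \cite[Section~5]{IPV10} and \cite[Theorem~4.1]{KV15}, the goal is to show that $A'$ is group-like, i.e.\ $A'=L(B)$ for a normal subgroup $B\lhd\La$ with $B=\Sg_0^{(I)}$ for some $\Sg_0\leqslant\La$, that $\La=B\rtimes\Psi=\Sg_0\wr_I\Psi$ with the $\Psi$-action on $I$ conjugate through $\delta$ to $\G\ca I$ (hence again satisfying a)--c)), and that the restriction of $\mathrm{Ad}(v)$ to one tensor coordinate of $A$ defines a $\ast$-isomorphism $\theta_0\colon L(H_0)\to L(\Sg_0)$ with $\mathrm{Ad}(v)|_A=\theta_0^{\bar\otimes I}$. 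Granting this, for $x\in L(H_0^{(I)})$ and $\g\in\G$ one computes $v(xu_\g)v^*=\eta(\g)\,\theta_0^{\bar\otimes I}(x)\,v_{\delta(\g)}$, i.e.\ $xu_\g=\eta(\g)\,v^*\theta_0^{\bar\otimes I}(x)\,v_{\delta(\g)}v$, which is the asserted identity (after undoing the reduction $\theta=\mathrm{id}$). The hard part is exactly this last step: turning the crossed-product decomposition $M=A'\rtimes\Psi$ and the bare isomorphism $\Psi\cong\G$ into the precise identification $\La=\Sg_0\wr_I\Psi$ --- in particular proving that $A'$ is spanned by a subgroup of $\La$ and that the recovered $\Psi$-action on $I$ genuinely matches $\G\ca I$. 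This is where the property~(T) of $H_0$ (rigidity of $L(H_0)$) and the combinatorial hypotheses a)--c), which control the mixing of the generalized Bernoulli shift along stabilizer subgroups, enter in an essential way; Steps~1--2, by comparison, are a routine assembly of the height lemmas of the preliminary section with the cited transfer theorems.
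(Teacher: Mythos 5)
Your Steps 1 and 2 are sound, and they in fact follow the route the paper takes for the companion result Theorem \ref{semidirprodrig2} (positive height over $\La$ via Lemmas \ref{perturbationheight1}--\ref{perturbationheight2}, then a conjugacy theorem for subgroups of unitaries with positive height), whereas the paper's own proof of the present statement instead invokes Steps 4 and 5 of \cite[Theorem 8.2]{IPV10} verbatim to pass from $w\Delta(u_\g)w^*\in\mathbb T(\G\times\G)$ to the identity $\theta(u_\g)=\eta(\g)v_{\rho(\g)}$ of \eqref{eqgroups}. Either way one arrives at the same point. The problem is your Step 3, which you explicitly leave as ``the goal'' and ``the hard part'': this is not a technicality to be granted but roughly half of the proof, and without it the theorem is not established.

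Concretely, two arguments are missing. First, to show that $A'=vAv^*$ is of the form $L(\Sigma)$ you must upgrade the intertwining $\Delta(A')\prec^{s}_{M\bar\otimes M}A'\bar\otimes A'$ of Proposition \ref{semidirprodrig1'}(e) to an actual inclusion $\Delta(A')\subseteq A'\bar\otimes A'$. The paper does this in Claim \ref{contcore}: it expresses $\prec^{s}$ as uniform $\|\cdot\|_2$-approximation by Fourier modes supported on a finite set $S\times S$ of elements of $\G\times\G$, uses the $\mathrm{Ad}(\Delta(u_\g))$-invariance of the core to replace $S$ by $\mu S\mu^{-1}$, and uses icc-ness of $\G$ to choose $\mu$ with $\mu S\mu^{-1}\cap S=\{e\}$; only then does \cite[Lemma 7.1.2]{IPV10} apply to produce $\Sigma\lhd\La$ with $A'=L(\Sigma)$ and $\La=\Sigma\rtimes\Psi$. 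Second, to see that $\Sigma$ splits as $\oplus_I\Sigma_0$ with $\Psi$ acting by the generalized Bernoulli shift, one must run the same containment argument coordinatewise (Claim \ref{contcoreslot}): hypothesis c) guarantees that $\{\mathrm{Ad}(u_\g\otimes u_\g)\}_{\g\in{\rm Stab}_\G(i)}$ is weakly mixing relative to $L^2(\theta(A_0^i)\bar\otimes\theta(A_0^i))$, which forces $\Delta(\theta(A_0^i))\subseteq\theta(A_0^i)\bar\otimes\theta(A_0^i)$, hence $\theta(A_0^i)=L(\Sigma_i)$; transitivity of $\G\ca I$ then identifies all $\Sigma_i$ and exhibits $\La=\Sigma_0\wr_I\Psi$ together with $\theta_0^{\bar\otimes I}$. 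Neither argument is supplied, or even sketched, in your proposal. A minor further point: your attribution of the remaining difficulty to property (T) of $H_0$ is misplaced --- that hypothesis is consumed already in Proposition \ref{semidirprodrig1} (rigidity of $\Delta(L(H_0))\subset M\bar\otimes M$); what Step 3 actually uses is the $\prec^{s}$ statement, icc-ness of $\G$, condition c), and \cite[Lemma 7.1.2]{IPV10}.
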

\begin{proof} Let $A=L(H_0^{(I)})$, and notice from assumptions we have that $\theta(L(G))=L(\La)=M$. Using Proposition \ref{semidirprodrig1'} one can find  $w\in \mathcal U(M\bar\otimes M)$, group homomorphisms $\delta_i:\G\to\G$, and a character $\omega:\G \to \mathbb T$ such that $w\Delta(\theta(u_\g))w^*=\omega(\g)\theta(u_{\delta_1(\g)})\otimes \theta(u_{\delta_2(\g)})$ for all $\g\in\G$. Then aplying verbatim Steps 4 and 5 in the proof of \cite[Theorem 8.2]{IPV10} one can find an injective group homomorphism $\rho: \G \ra \La$ and a character $\eta: \G \ra \mathbb T$ satisfying \begin{equation}\label{eqgroups}\theta(u_\g)= \eta(\g)v_{\rho(\g)},\text{ for all }\g\in \G.\end{equation} Denote by $\Psi=\rho(\G)$. In addition, these proofs also show there is $v\in \mathcal U(M)$ such that $w=(v^*\otimes v^*)\Delta(v)$. Henceforth the canonical unitaries $\theta(u_\g), \g \in \G$ will be replaced by $v\theta(u_\g)v^*$ and $A$ will be replaced by $vAv^*$.
Under these conventions we prove that
\begin{claim}\label{contcore}$\Delta(\theta(A))\subset \theta(A)\bar{\otimes}\theta(A)$.
\end{claim}
\noindent \emph{Proof of Claim \ref{contcore}}.  By Proposition \ref{semidirprodrig1}, $\Delta(\theta(A))\prec^s \theta(A)\bar{\otimes}\theta(A)$. This means that for every $\epsilon>0$, there exists a finite subset $e\in S\subset \theta(\G)$ such that $\|d-P_{S\times S}(d)\|_2\leq\epsilon$, for all $d\in \mathcal U(\Delta(\theta(A)))$. But since, according to Proposition \ref{semidirprodrig1'}, $\Delta(\theta(A))$ is invariant to  $Ad (\Delta(\theta(u_\g)))= Ad (\theta(u_\g)\otimes \theta(u_\g))$ for all $\g\in\G$, we see that $\|d-P_{\mu S \mu^{-1}\times \mu S\mu^{-1}}(d)\|_2\leq \epsilon$, for all $d\in\mathcal U(\Delta(\theta(A)))$ and $\mu\in \theta(\G)$. As $\G$ is icc we can find $\mu\in\theta(\G)$ such that $\mu S \mu^{-1}\cap S=\{e\}$ (see for instance \cite[Proposition 3.4]{CSU13}). By the triangle inequality this further implies that
\[\|d-E_{\theta(A)\bar{\otimes}\theta(A)}(d)\|_2=\|d-P_{(\mu S \mu^{-1}\cap S)\times (\mu S\mu^{-1}\cap S)}(d)\|_2\leq 2\epsilon,\]
for all $d\in\mathcal U(\Delta(\theta(A)))$. As $\epsilon$ is arbitrary, this implies $\Delta(\theta(A))\subset \theta(A)\bar{\otimes}\theta(A)$. $\hfill\blacksquare$
\vskip 0.04in
From Claim \ref{contcore} and \cite[Lemma 7.1.2]{IPV10} it follows that $\theta(A)=L(\Sigma)$, for some $\Sigma<\La$. Since the $u_\g$'s normalize $A$, it follows that $\Psi\ni \rho(\g)$ normalizes $\Sigma$, for all $\g$. Consider the action of $\Psi \ra Aut(\Sigma)$ given by $\Psi\ni \la \ra Ad(\la)\in Aut (\Sg)$ and observe $\La$ splits as a semidirect product  $\La=\Sigma\rtimes\Psi$, because $L(\La)=\theta (A)\rtimes\theta(\G)$. 

For the remaining part consider  $A_0=L(H_0)$ and denote by $A_0^i$ the copy of $A_0$ in position $i\in I$.  Next we show that 
\begin{claim}\label{contcoreslot}$\Delta(\theta(A^i_0))\subset \theta(A^i_0)\bar{\otimes}\theta(A^i_0)$, for all $i \in I$.
\end{claim}
\noindent \emph{Proof of Claim \ref{contcoreslot}}. Using \eqref{eqgroups} we note that $\Delta(\theta(A_0^i))$ is fixed by $Ad(\Delta(\theta (u_\g)))=Ad(\theta(u_\g)\otimes \theta(u_\g))$, for all $\g\in {\rm Stab}_\G (i)$. Due to the assumption that ${\rm Stab}_\G(i)\cdot j$ is infinite for all $i\neq j$, the representation  $Ad \{\theta(u_\g)\otimes \theta(u_\g)\}_{\g\in {\rm Stab}_\G (i)}$ is weakly mixing on $L^2(M\bar{\otimes}M)\ominus L^2(\theta(A_0^i) \bar\otimes \theta(A_0^i))$, so it follows that $\Delta(\theta(A_0^i))\subset \theta(A_0^i) \bar{\otimes} \theta(A_0^i)$. $\hfill\blacksquare$ 

Hence from Claim \ref{contcoreslot} and \cite[Lemma 7.1.2]{IPV10}  for every $i\in I$ there exists a subgroup $\Sigma_i<\La$ such that $\theta(A_0^i)=L(\Sigma_i)$. Since the action $\G\curvearrowright I$ is transitive, it follows that $\Sigma_i\cong \Sigma_0$ for all $i$, and then that $\Sigma=\bigoplus_I \Sigma_0$. Moreover, this entails that the action $\Psi\ra Aut(\Sg)=Aut(\bigoplus_I \Sg_i)$ is induced by the generalized Bernoulli action of $\Psi\ca I$ and hence $\Lambda=\Sigma_0 \wr_I \G$. The rest of the statement follows from the previous observations. \end{proof}


\section{Proof of Theorem \ref{main4}}

\begin{theorem}\label{semidirprodrig2}Let $H_0, \G$  be icc, property (T) group. Also assume that $\G=\G_1\times \G_2$, where $\G_i$ are nonamenable biexact groups for all $i=1,2$. Let $\Gamma \curvearrowright I$ be an action on a countable set $I$ satisfying the following conditions:
\begin{enumerate}
\item [a)] The stabilizer ${\rm Stab}_\G(i)$ is amenable for each $i \in I$;
\item [b)] There is $k\in \mathbb N$ such that for each $J\subseteq I$ satisfying  $|J|\geq k$ we have $|{\rm Stab}_\G(J)|<\infty$.
\end{enumerate}
Denote by $G=H_0 \wr_I \G$ the corresponding generalized wreath product. Let $\La$ be an \emph{arbitrary} group and let $\theta: L(G)\ra L(\La)$ be a $\ast$-isomorphism. Then $\La$ admits a semidirect product decomposition $\La = \Sg \rtimes \Phi$ satisfying the following properties: there exist a group isomorphism $\delta:\G \ra \Phi$, a character $\zeta:\G\ra \mathbb T$, a $\ast$-isomorphism $\theta_0: L(H_0^{(I)})\ra L(\Sg)$ and a unitary $t\in L(\La)$ such that for every $x\in L(H_0^{(I)})$ and $\g\in \G$ we have \begin{equation*}
\theta(x u_\g) =\zeta(\g)t \theta_0(x) v_{\delta(\g)}t^*.
\end{equation*}
Here $\{u_g \,|\, \g\in \G\}$ and $\{v_\la \,|\, \la\in \Phi\}$ are the canonical unitaries of $L(\G)$ and $L(\Phi)$, respectively.

\end{theorem}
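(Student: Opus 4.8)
The plan is to run the Ioana--Popa--Vaes wreath-product rigidity scheme of \cite[Theorem 8.2]{IPV10} along the lines of Propositions \ref{semidirprodrig1}--\ref{semidirprodrig1'}, with the biexact product structure of $\G=\G_1\times\G_2$ replacing the torsion-freeness used in Proposition \ref{semidirprodrig1'}. Write $A_0=L(H_0)$, $A=L(H_0^{(I)})=A_0^{(I)}$, so that $M=L(G)=A\rtimes\G$ with $\G$ acting by the generalized Bernoulli shift; identify $M=L(\La)$ and let $\Delta\colon M\to M\bar\otimes M$ be the comultiplication along $\La$, so $M\bar\otimes M=(A\bar\otimes A)\rtimes(\G\times\G)$. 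First observe that $\G$ is nonamenable (a product of nonamenable property (T) groups), hence every amenable subgroup ${\rm Stab}_\G(i)$, and hence every ${\rm Stab}_\G(J)$ with $J\subseteq I$ finite (an intersection of infinite-index subgroups stays infinite index, while by b) the stabilizers of large sets are finite), has infinite index in $\G$. Consequently every $\G$-orbit on a finite subset of $I$ is infinite, the inclusion $\G\leqslant G$ is icc, the conjugation representation $\{{\rm Ad}(u_\g)\}_{\g\in\G}$ on $L^2(M)\ominus\mathbb C1$ is weakly mixing (as in the proof of Proposition \ref{semidirprodrig1'}), and $A\rtimes{\rm Stab}_\G(i)\subseteq M$ has infinite index for all $i$.

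Since $H_0$ and $\G$ have property (T), the inclusions $\Delta(A_0)\subseteq M\bar\otimes M$ and $\Delta(L(\G))\subseteq M\bar\otimes M$ are rigid. Applying \cite[Theorem 4.2]{IPV10} to $\Delta(A_0)$ inside the Bernoulli crossed product $M\bar\otimes M=M\bar\otimes(A\rtimes\G)$ --- using that $\Delta(A)$ lies in the quasinormalizer of $\Delta(A_0)$, that $\Delta(A_0)$ is diffuse, that $A\rtimes{\rm Stab}_\G(J)$ has infinite index so the stabilizer alternatives collapse by \cite[Lemma 7.2.2]{IPV10} and b), and that $\Delta(M)'\cap M\bar\otimes M=\mathbb C1$ since $\La$ is icc --- gives, exactly as in part c) of Proposition \ref{semidirprodrig1}, that $\Delta(L(H_0^{(I)}))\prec^{s}_{M\bar\otimes M}L(H_0^{(I)})\bar\otimes L(H_0^{(I)})$. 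A second application of \cite[Theorem 4.2]{IPV10} to the quasinormalizer of $\Delta(L(\G))$, combined with this $\prec^s$ relation (via the argument opening the proof of \cite[Theorem 8.2]{Io10}) and \cite[Lemma 7.2.2]{IPV10}, rules out the remaining alternatives and produces, as in part d) of Proposition \ref{semidirprodrig1}, a unitary $u\in\mathcal U(M\bar\otimes M)$ with $u\Delta(L(\G))u^*\subseteq L(\G\times\G)$.

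The crux --- and the step I expect to be the main obstacle --- is to upgrade this to an explicit form. Set $\mathcal G=\{u\Delta(u_\g)u^*:\g\in\G\}\subseteq\mathcal U(L(\G\times\G))$. Since $\mathcal G$ normalizes $u\Delta(A)u^*$, which by the previous step is $\prec^{s}L(H_0^{(I)}\times H_0^{(I)})$, Step 5 of the proof of \cite[Theorem 5.1]{IPV10} together with Lemmas \ref{perturbationheight1}--\ref{perturbationheight2} yields $h_{\G\times\G}(\mathcal G)>0$; moreover $\mathcal G''=u\Delta(L(\G))u^*\nprec_{M\bar\otimes M}L(C_{\G\times\G}(g_1,g_2))$ for $(g_1,g_2)\neq(e,e)$ (else, using $\Delta(A)\prec A\bar\otimes A$, one gets $\Delta(M)\prec M\bar\otimes(A\rtimes C_\G(g_j))$, forcing $C_\G(g_j)$ to be finite index in the icc group $\G$), and the adjoint action of $\mathcal G$ on $L^2(L(\G\times\G))\ominus\mathbb C1$ is weakly mixing by \cite[Proposition 7.2.3]{IPV10} and the weak-mixing argument in the proof of Proposition \ref{semidirprodrig1'}. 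From this one must extract a character $\omega\colon\G\to\mathbb T$, homomorphisms $\delta_1,\delta_2\colon\G\to\G$, and $w\in\mathcal U(L(\G\times\G))$ with $w\Delta(u_\g)w^*=\omega(\g)\,u_{\delta_1(\g)}\otimes u_{\delta_2(\g)}$. In the torsion-free case this is \cite[Theorem 4.1]{KV15}; in the present generality I would run the analogous height/weak-mixing argument, using the decomposition $L(\G\times\G)=L(\G_1)\bar\otimes L(\G_2)\bar\otimes L(\G_1)\bar\otimes L(\G_2)$ into biexact tensor factors, together with \cite[Theorem 1]{Oz03} and the product-rigidity techniques of \cite{CdSS16}, to locate the commuting diffuse property (T) subalgebras $u\Delta(L(\G_1))u^*$ and $u\Delta(L(\G_2))u^*$ in complementary tensor positions before feeding the resulting intertwiners back into the conjugation. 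Reconciling the biexactness dichotomies with the height machinery is where the real work lies.

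Once $w\Delta(u_\g)w^*=\omega(\g)\,u_{\delta_1(\g)}\otimes u_{\delta_2(\g)}$ is available, Steps 4--5 of the proof of \cite[Theorem 8.2]{IPV10} produce an injective homomorphism $\delta\colon\G\to\La$, a character $\zeta\colon\G\to\mathbb T$ and a unitary $t\in\mathcal U(M)$ with $\theta(u_\g)=\zeta(\g)\,tv_{\delta(\g)}t^*$; put $\Phi=\delta(\G)$. Since $\{u_\g\}$ normalizes $A$, $\Phi$ normalizes $t^*\theta(A)t$, and the disjointness trick of Claim \ref{contcore} --- conjugating the finite support set by a suitable $\mu\in\theta(\G)$, available because $\G$ is icc --- promotes $\Delta(t^*\theta(A)t)\prec^{s}(t^*\theta(A)t)\bar\otimes(t^*\theta(A)t)$ to genuine containment, so $t^*\theta(A)t=L(\Sg)$ for a subgroup $\Sg\leqslant\La$ by \cite[Lemma 7.1.2]{IPV10}; one then checks $\Sg$ is normal in $\La$ and $\Sg\cap\Phi=\{e\}$. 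Hence $L(\La)=t^*\theta(A)t\rtimes t^*\theta(L(\G))t=L(\Sg)\rtimes L(\Phi)$ forces $\La=\Sg\rtimes\Phi$, and taking $\theta_0={\rm Ad}(t^*)\circ(\theta|_A)$ gives the asserted identity $\theta(xu_\g)=\zeta(\g)\,t\theta_0(x)v_{\delta(\g)}t^*$.
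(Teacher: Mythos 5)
Your architecture is right at the two ends: the opening reduction to parts c) and d) of Proposition \ref{semidirprodrig1} matches the paper, and so does the final assembly once one knows that the canonical unitaries $\theta(u_\g)$ can be conjugated onto $\mathbb T\{v_\la\}$. The problem is the middle step, which you yourself flag as ``where the real work lies'' and then leave as a plan rather than a proof. That is a genuine gap: \cite[Theorem 4.1]{KV15} is precisely the tool that is unavailable here (it is what makes the torsion-free case, Theorem \ref{strongrigsemidirect1}, go through), and ``running the analogous height/weak-mixing argument using the biexact tensor decomposition of $L(\G\times\G)$'' cannot be waved at --- there is no off-the-shelf statement conjugating the group $\mathcal G=\{u\Delta(u_\g)u^*\}$ into $\mathbb T(\G\times\G)$ for a general icc $\G$ with torsion, and locating $u\Delta(L(\G_1))u^*$ and $u\Delta(L(\G_2))u^*$ in complementary tensor positions of $L(\G_1)\bar\otimes L(\G_2)\bar\otimes L(\G_1)\bar\otimes L(\G_2)$ does not by itself produce group elements of $\G\times\G$.

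The paper routes around this in a genuinely different way, and the order of operations is the point. It first identifies a subgroup $\Phi\leqslant\La$ with $wL(\Phi)w^*=L(\G)$ (Claims \ref{groupidentificationcorner} and \ref{groupidentification}): biexactness of $\G\times\G$ relative to $\{\G\times\G_1,\G\times\G_2,\G_1\times\G,\G_2\times\G\}$ and \cite[Theorem 15.1.5]{BO08}, applied to the commuting nonamenable factors $u\Delta(L(\G_1))u^*$ and $u\Delta(L(\G_2))u^*$, gives $u\Delta(L(\G_1))u^*\prec L(\G\times\G_1)$; then \cite[Theorem 4.1]{DHI16} produces a subgroup of $\La$ with nonamenable centralizer intertwining $L(\G_1)$; a chain of quasinormalizer and commensurator computations (\cite[Corollary 5.2]{JGS10}, \cite[Proposition 2.4]{CKP14}), with property (T) used to stabilize the increasing union $\Omega_k\nearrow\Omega$ of virtual centralizers, yields a corner identification $\mu dL(\Phi)d\mu^*=hL(\G)h$ with $QN^{(1)}_\La(\Phi)=\Phi$, which a property (T) dichotomy upgrades to full unitary conjugacy. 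Only then does the height machinery enter: $h_{\G\times\G}(\mathcal G)>0$ is pushed down to $h_\La(\G)>0$ by Lemma \ref{perturbationheight2}, hence to $h_\Phi(w^*\G w)>0$, and \cite[Theorem 3.1]{IPV10} --- which requires positive height \emph{over the target subgroup $\Phi$ of $\La$}, not over $\G\times\G$ --- delivers $tu_\g t^*=\overline{\zeta(\g)}v_{\delta(\g)}$. Without first manufacturing $\Phi$, your estimate $h_{\G\times\G}(\mathcal G)>0$ has nothing to be fed into, so the proposal as written cannot close.
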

\begin{proof} From assumptions we have that $\theta(L(G))=L(\La)=M$. Denote by $A_0=\theta(L(H_0))$ and $A=\theta(L(H_0^{I})$. Also to simplify the writing, throughout the proof we will identify $\G$ with $\theta(\G)$, etc. Thus note that $M=A\rtimes\G$, the action being given by generalized Bernoulli shifts. Consider $\Delta:M\to M\bar{\otimes} M$ the comultiplication along $\Lambda$. Note that $M\bar{\otimes} M=(A \bar{\otimes} A)\rtimes (\G\times\G)$. Theorem \ref{semidirprodrig1} implies that 
\begin{enumerate}
\item \label{1206} $\Delta(A)\prec^s_{M\bar\otimes M} A\bar\otimes A$, and
\item \label{1207} there is $u\in \mathcal U(M\bar\otimes M)$ such that $u\Delta(L(\G))u^*\subseteq L(\G\times \G)$.
\end{enumerate}

\noindent Next we show the following 
\begin{claim}\label{groupidentificationcorner} There exist a subgroup $\Phi< \La$ with $QN^{(1)}_\La(\Phi)=\Phi$, $d\in \mathcal P(L(\Phi))$ and  $\mu\in \mathcal U(M)$ satisfying $h=\mu d\mu^*\in L(\G)$ and $\mu dL(\Phi)d\mu^*= hL(\G)h$. \end{claim}
\noindent\emph{Proof of Claim \ref{groupidentification}}. Let $\mathcal K:=\{ \G\times \G_1, \G\times \G_2, \G_1 \times \G,\G_2\times \G  \}$. Since by \cite[Lemma 15.3.3]{BO08} $\G\times \G$ is biexact relatively to $\mathcal K$ and  $\Delta(L(\G_1))$ and $\Delta(L(\G_2))$ are commuting non-amenable factors then \cite[Theorem 15.1.5]{BO08} implies that there are $\Psi\in \mathcal K$ and $i=1,2$ so that $u\Delta(L(\G_i))u^*\prec_{L(\G\times \G)} L(\Psi)$. Since the flip automorphism of $M\bar\otimes M$ acts identically on $\Delta (L(\G_i))$ we can assume without any loss of generality $\Psi=\G\times \G_1$ and $i=1$. Hence \begin{equation*}
u\Delta(L(\G_1))u^*\prec_{L(\G\times \G)} L(\G\times \G_1).
\end{equation*}    
The using \cite[Theorem 4.1]{DHI16} (see also \cite[Theorem 3.3]{CdSS16})) this further implies there exists a subgroup $\Sg<\La$ with  non-amenable centralizer $\Upsilon:=C_\La(\Sg)$  and $L(\G_1)\prec_M L(\Sg)$. Passing to the intertwining of the relative commutants we have that  $L(\Upsilon)\subseteq L(\Sg)'\cap M\prec_M L(\G_1)'\cap M=L(\G_2) $. Thus there are projections $e\in L(\Upsilon)$,$f\in L(\G_2)$, a partial isometry $v\in M$, and an injective unital $\ast$-homomorphism  $\phi: eL(\Upsilon)e\ra fL(\G_2)f$ such that \begin{equation}\label{1200}
\phi(x)v=vx, \text{ for all } x\in eL(\Upsilon)e.
\end{equation} 
Denote by $T:= \phi(eL(\Upsilon)e)$ and notice that $q':=vv^*\in T'\cap fMf $ and $p:=v^*v\in eL(\Upsilon)e'\cap eMe = (L(\Upsilon)'\cap M) e$.  Since $T$ is a non-amenable factor then $a)$ implies that $T\nprec L({\rm Stab}_\G (i))$ for all $i$ and using \cite[Theorem 3.1]{Po03} we have $QN_{fMf} (T)''\subseteq L(\G)$. In particular, $q'\in L(\G)$ and by \eqref{1200} there is $u\in \mathcal U(M\bar\otimes M)$ such that $u eL(\Upsilon)epu^*\subseteq L(\G)$. Since $L(\G)$ is a factor, the same argument from \cite[Theorem 5.1, page 26]{IPP05} shows that one can perturb $u$ to a new unitary such that we further have  \begin{equation}\label{1201}u L(\Upsilon)pu^*\subseteq L(\G).
\end{equation}  Since $\Upsilon$ is non-amenable then $uL(\Upsilon)pu^*\nprec L({\rm Stab}_\G (i))$ for all $i$ and \eqref{1201} combined with  \cite[Theorem 3.1]{Po03} and the quasinormalizer formula show that $ u pL(QN_\La(\Upsilon))''pu^*\subseteq  QN_{upM pu^*}(uL(\Upsilon)pu^*)''\subseteq L(\G)$. Since $L(\G)$ is a factor, the same argument as before further implies that $u L(QN_\La(\Upsilon))z' u^*\subseteq L(\G)$, where $z'$ is the central support of $p$ in $L(QN_\La(\Upsilon))$. Notice $\Sg\leqslant vC_\La(\Upsilon)< QN_\La(\Upsilon)$ and hence $u L(\Sg)z' u^*\subseteq L(\G)$. Letting $\Omega:=vC_\La(\Sg)$, $\Theta
:=QN^{(1)}_\La(\Sg\Omega)$ and using the same arguments as before, we can further find $\eta\in \mathcal U (M\bar\otimes M)$ and a projection $z\in \mathcal Z(L(\Theta))$ such that \begin{equation}\label{1202}\eta L(\Theta) z \eta^*\subseteq L(\G).
\end{equation}   Since  $\Upsilon,\Sg<\Theta$ are commuting non-amenable groups and  $\G$ is biexact relatively to $\{\G_1,\G_2\}$, \cite[Theorem 15.1.5]{BO08} implies that $\eta L(\Sg) z\eta^*\prec_{L(\G)} L(\G_k)$, for some $k=1,2$. Again, wlog we can assume $k=1$. Passing to the relative commutants intertwining we get \begin{equation}\label{1203}L(\G_2)=L(\G_1)'\cap L(\G)\prec_{L(\G)}  (\eta L(\Sg) z\eta^*)'\cap \eta z\eta^* L(\G) \eta z\eta ^*\subseteq \eta L(\Omega)z\eta^*.\end{equation} 

Now let $\{\mathcal O_k\}_k$ be a countable enumeration of all the finite orbits under conjugation by $\Sg$ and notice that $\cup_k\mathcal O_k=\Omega$. Consider $\Omega_k:=\langle\mathcal O_1,...,\mathcal O_k\rangle\leqslant \La$ and note that $\Omega_k\nearrow \Omega$. Since $L(\G_2)$ has property (T) then \eqref{1203} implies that  \begin{equation}L(\G_2)\prec_{L(\G)}  \eta L(\Omega_k)z\eta^*\text{ for some }k.
\end{equation} 
By \cite[Proposition 2.4]{CKP14}, there exist nonzero projections $a\in L(\G_2), q\in L(\Omega_k) $, a partial isometry $w\in L(\G)$, a subalgebra $D\subseteq \eta qL(\Omega_k)qz\eta^*$, and a $\ast $-isomorphism $\psi:a L(\G_2)a\to D  $ such that 
\begin{align}
D\vee (D'\cap \eta qL(\Omega_k)qz\eta^*)\subseteq \eta qL(\Omega_k)qz\eta^*\quad \text{has finite index, and}\label{eq:DHasFiniteIndex'}\\
\psi(x)w=wx \quad \forall \, x\in aL(\G_2)a.\label{eq:PartialIsomIntertwinesA'}
\end{align}
Let $r=\eta qz\eta^* $ and notice that $ww^*\in D'\cap r L(\G) r $ and $w^*w\in (aL(\G_2)a)'\cap aL(\G)a=L(\G_1)\bar\otimes \mathbb C a $.  Hence there is a projection $b\in L(\G_1)$ satisfying $w^*w=b\otimes a $.  Picking $c\in\mathcal{U}(L(\G)) $ so that $w=c (b\otimes a) $ then \eqref{eq:PartialIsomIntertwinesA'} gives   
\begin{align}\label{cornerbegin'}
Dww^*=w L(\G_2)w^*=c(\mathbb Cb\otimes aL(\G_2)a)c^*.
\end{align}
Passing to the relative commutants, we obtain $ww^*(D'\cap r L(\G) r) ww^*=c(bL(\G_1)b \otimes \mathbb C a) c^*$. Hence there exist $s_1,s_2> 0 $ satisfying \begin{align}\label{corner'}(D'\cap r L(\G) r)y=c(bL(\G_1)b \otimes \mathbb Ca)^{s_2}c^*\cong L(\G_1)^{s_1},
\end{align}
where $y $ is the central support projection of $ ww^*$ in $D'\cap r L(\G) r$.   Notice 
\begin{align*}
D'\cap rL(\G)r\supseteq (\eta qL(\Omega_k)qz\eta^*)'\cap rL(\G)r= \eta (L(\Omega_k)'\cap L(\Theta))qz \eta^*\supseteq \eta L(C_\Sg(\Om_k))qz\eta^*.
\end{align*}
From the definition of $\Omega_k$ it follows that  $[\Sg:C_\Sg(\Omega_k)]<\infty $. Since $\Sg$ is non-amenable it follows that $C_\Sg(\Omega_k)$ is also non-amenable and hence $\eta L(C_\Sg(\Omega_k))qz \eta^* $ has no amenable direct summand.  Moreover we also have  $D'\cap rL(\G)r\supseteq D'\cap \eta qL(\Omega_k)qz\eta^* $. In conclusion $(\eta L(C_\Sg(\Omega_k))qz \eta^* )y$ and $(D'\cap \eta qL(\Omega_k)qz\eta^*)y$ are commuting subalgebras of $(D'\cap rL(\G)r)y$ where $(\eta L(C_\Sg(\Omega_k))qz \eta^* )y$ has no amenable direct summand.  Since $\Gamma_i $ was assumed to be bi-exact, then using \eqref{corner'} and \cite[Theorem 1]{Oz03} it follows that $(D'\cap \eta qL(\Omega_k)qz\eta^*)y$ is purely atomic.  Thus, cutting by a central projection $r'\in D'\cap \eta qL(\Omega_k)qz\eta^*$ and using \eqref{eq:DHasFiniteIndex'} we may assume that $D\subseteq \eta qL(\Omega_k)qz\eta^*$ is a finite index inclusion of algebras.  Proceeding as in the second part of \cite[Claim 4.4]{CdSS16}, we may assume that $D\subseteq \eta qL(\Omega_k)qz\eta^* $ is a finite index inclusion of II$_1$ factors. Moreover one can check that if one replaces $w$ by the partial isometry of the polar decomposition of $r'w\neq 0$ then all relations \eqref{eq:PartialIsomIntertwinesA'},\eqref{cornerbegin'} and \eqref{corner'} are still satisfied. 

Using relation \eqref{cornerbegin'}, the quasinormalizer compresion formula, and the fact that $D\subseteq \eta qL(\Omega_k)qz\eta^* $ is a finite index inclusion of II$_1$ factors we can see that  \begin{equation*}  \begin{split} c (b\otimes a)L(\G) (b\otimes a) c^*& = QN_{c (b\otimes a) M (b\otimes a) c^*}(c(\mathbb Cb\otimes (a L(\G_2)a))c^*)'' \\ 
&=QN_{ww^*Mww^*} (Dww^*)'' \\
&= ww^*QN_{rMr}(D)'' ww^*\\ 
&= ww^*QN_{\eta q z M qz \eta^*}(\eta qL(\Omega_k)qz \eta^*)'' ww^*\\ 
&= ww^* \eta qz QN_{L(\La)}(L(\Omega_k))''qz \eta ww^*.\end{split}
\end{equation*}
Letting $\Xi=QN_\La(\Omega_k)$, then the previous relation and formula \cite[Corollary 5.2]{JGS10} imply that   $c (b\otimes a)L(\G) (b\otimes a) c^*=ww^*\eta L(\Xi)\eta^*ww^*$. Since $QN^{(1)}_G(\G) =\G$, this formula and \cite[Corollary 5.2]{JGS10} further imply that  \begin{equation}\label{1205}\begin{split}c (b\otimes a)L(\G) (b\otimes a) c^*& =QN^{(1)}_{c (b\otimes a) M (b\otimes a) c^*}(c (b\otimes a)L(\G) (b\otimes a) c^*)''\\&=QN^{(1)}_{ww^* \eta M\eta^*ww^*}(ww^*\eta L(\Xi)\eta^*ww^*)''\\&= ww^*\eta L(\Phi)\eta^* ww^*,\end{split}\end{equation} where $\Phi = \langle QN^{(1)}_\La(\Xi)\rangle$. Hence in particular we have $ww^*\eta L(\Xi)\eta^* ww^*= ww^*\eta L(\Phi)\eta^* ww^*$ and by \cite[Proposition 2.6]{CdSS16} it follows that $[\Phi:\Xi]<\infty$. This entails that $\Phi=QN^{(1)}_\La(\Xi)=QN^{(1)}_\La(\Phi)$.  Note the above relations imply that $ww^*\in \eta L(\Xi) \eta^* \subseteq \eta L(\Phi)\eta^*$. Consider $d\in \mathcal P(L(\Phi))$ such that $ww^*=\eta d\eta^*$ and letting $\mu:= c^*\eta$ and $h:=b\otimes a$ then relation \eqref{1205} gives the desired conclusion. $\hfill\blacksquare$

\begin{claim}\label{groupidentification} There exists a unitary $w\in M$ such that $wL(\Phi)w^*=L(\G)$. \end{claim}

\noindent\emph{Proof of Claim \ref{groupidentification}}. From Claim \ref{groupidentificationcorner}, there exists $\Phi\leqslant \La$ with $\Phi = QN^{(1)}_\La(\Phi)$, $d\in\mathcal P(L(\Phi))$ and  $\mu\in \mathcal U(M)$ satisfying $h=\mu d\mu^*\in L(\G)$ and \begin{equation}\label{1100}\mu dL(\Phi)d \mu^*=hL(\G)h.
\end{equation} As $\G$ has property (T), \eqref{1100}  implies that $dL(\Phi)d$ is a property (T) von Neumann algebra. By \cite[Lemma 2.13]{CI17} it follows that $\Phi$ is a property (T) group. Fix $r\in \mathcal P((\mu L(\Phi) \mu^*) '\cap M)$ and note that $\mu L(\Phi) \mu^* r$ is a property (T) von Neumann algebra. Thus, using \cite[Theorem]{} we have that either \begin{enumerate}\item $\mu L(\Phi)\mu^* r\prec_M L(\G)$, or \item $\mu L(\Phi) \mu^* r \prec_M L(H^F)$ for some finite $F\subset I$.\end{enumerate} If (2) would hold then we would have that $L(H^{I\setminus F})=L(H^F)'\cap M \prec_M  (\mu L(\Phi) \mu^* r)'\cap rMr =r \mu(L(\Phi)'\cap M)\mu^* r$. On the other hand since  $QN^{(1)}_\La(\Phi)=\Phi$ we have $\mu (L(\Phi)'\cap M)\mu^*= \mathcal Z (\mu L(\Phi)\mu^*)$  . Altogether these would show that $H^{I\setminus F}$ is amenable and hence $H$ is amenable, a contradiction. So (1) must hold for every $r\in \mathcal P( (\mu L(\Phi)\mu^*)'\cap M)$. This entails that $\mu L(\Phi)\mu^* \prec^s_M L(\G)$ and using \eqref{1100} and \cite[Lemma 2.6]{CI17}   one can find $w\in \mathcal U(M)$ such that $wL(\Phi)w^*=L(\G)$.  $\hfill\blacksquare$

Next consider the subgroup $\mathcal G=\{u\Delta(u_\g)u^*\mid \g\in\G\}\leqslant \mathcal U(L(\G\times\G))$. Since $\mathcal G$ normalizes $u\Delta(A)u^*$, which by (\ref{1206}) satisfies $u\Delta(A)u^* \prec^s A\bar{\otimes}A$, the argument in Step 5 in the proof of \cite[Theorem 5.1]{IPV10} implies that $h_{\G\times\G}(\mathcal G)>0$. Then using Lemmas \ref{perturbationheight1}-\ref{perturbationheight2} we further have that $h_{\La}(\G)>0$. Using Lemma \ref{perturbationheight1} we get $h_{\La}(w^*\G w)>0$ and by Claim \ref{groupidentification} we further conclude that $h_{\Phi}(w^*\G w)>0$. Thus by \cite[Theorem 3.1]{IPV10} one can find $t\in \mathcal U(M)$, a character $\zeta:\G\ra \mathbb T$ and a group isomorphism $\delta: \G\ra \Phi$ such that \begin{equation}t u_\g t^* =\overline {\zeta(\g)} v_{\delta (\g)} ,\text{ for all }\g\in \G.
\end{equation} Letting $\Omega:=(t^*\otimes t^*) \Delta(t)\in \mathcal U(M\bar\otimes M)$ we then have  $\Omega\Delta(u_\g)\Omega^*=\zeta(\g)(u_{\g}\otimes u_{\g})$ for all $\g\in\G$. Next, we replace the canonical unitaries $u_\g, \g \in \G$ by $t u_\g t^*$ and $A$ by $tAt^*$. Then (\ref{1207}) combined with the argument from the proof of Claim \ref{contcore} in Theorem \ref{strongrigsemidirect1} further shows that $\Delta(A)\subset A\bar\otimes A$. Hence using \cite[Lemma 7.1.2]{IPV10} there exists a subgroup $\Sg< \Lambda$ such that $A=L(\Sg)$. Since the $u_\g$'s normalize $A$, it follows that $v_{\delta(\g)}$ normalizes $\Sigma$, for all $\g$. Moreover, since $L(\La)=A\rtimes\G$,  then $\La$ admits a semidirect product decomposition $\La=\Sigma\rtimes\Phi$. Altogether the previous considerations give the conclusion of the theorem. \end{proof}



\begin{theorem}\label{wprig2} Let $H$ be icc, weakly amenable, biexact property (T) group. Let $\G=\G_1\times \G_2$, where $\G_i$ are icc, biexact, property (T) group.  Assume that $\G\ca I$ is an action on a countable infinite set $I$ that satisfies the following properties:
\begin{enumerate}
\item [a)] The stabilizer ${\rm Stab}_\G(i)$ is amenable for each $i \in I$;
\item [b)] There is $k\in \mathbb N$ such that for each $J\subseteq I$ satisfying  $|J|\geq k$ we have $|{\rm Stab}_\G(J)|<\infty$.
\end{enumerate}
Let $G=H \wr_I \G$ be the corresponding generalized wreath product. Let $\La$ be an \emph{arbitrary} group and let $\theta: L(G)\ra L(\La)$ be a $\ast$-isomorphism. Then one can find non-amenable icc groups  $\Sg_0,\Psi$, an amenable icc group $A$, and an action $\Psi \ca^\alpha A$ such that we can decompose $\La$ as semidirect product $\La = (\Sg_0^{(I)} \oplus A)\rtimes_{\beta\oplus \alpha} \Psi$, where $\Psi\ca^{\beta} \Sg_0^{(I)}$ is the generalized Bernoulli action. In addition, there exist a group isomorphism $\delta:\G \ra \Psi$, a character $\eta:\G\ra \mathbb T$, a $\ast$-isomorphism $\theta_0: L(H^{(I)})\ra L(\Sg_0^{(I)}\oplus A)$ and $u\in \mathcal U(L(\La))$ so that for every $x\in L(H^{(I)})$ and $\g\in \G$ we have \begin{equation*}
\theta(x u_\g) =\eta(\g)u \theta_0(x) v_{\delta(\g)}u^*.
\end{equation*}
Here $\{u_g \,|\, \g\in \G\}$ and $\{v_\la \,|\, \la\in \Psi\}$ are the canonical unitaries of $L(\G)$ and $L(\Psi)$, respectively. 

\end{theorem}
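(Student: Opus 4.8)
The plan is to bootstrap from the preceding Theorem~\ref{semidirprodrig2} and then feed the resulting ``core'' subalgebra into the infinite product rigidity results of Section~3 (Theorems~\ref{TwoFoldedProduct} and~\ref{infprodrig}). First I would apply Theorem~\ref{semidirprodrig2} with $H_0:=H$; its hypotheses are met, since $H$ is icc with property (T), $\G=\G_1\times\G_2$ with the $\G_i$ icc, biexact, property (T), and $\G\ca I$ has amenable stabilizers together with the finiteness condition on stabilizers of $k$-subsets. This produces a semidirect product decomposition $\La=\Sg\rtimes\Phi$, a group isomorphism $\delta\colon\G\to\Phi$, a character $\zeta\colon\G\to\mathbb T$, a $\ast$-isomorphism $\theta_0'\colon L(H^{(I)})\to L(\Sg)$, and a unitary $t\in L(\La)$ with $\theta(xu_\g)=\zeta(\g)\,t\,\theta_0'(x)\,v_{\delta(\g)}\,t^*$ for all $x\in L(H^{(I)})$ and $\g\in\G$. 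Conjugating in this identity gives $\mathrm{Ad}(v_{\delta(\g)})\circ\theta_0'=\theta_0'\circ\mathrm{Ad}(u_\g)$ on $L(H^{(I)})$, so the conjugation action of $\Phi$ on $L(\Sg)$ is, after transport through $\theta_0'$ and $\delta$, exactly the generalized Bernoulli action of $\G$ on $L(H^{(I)})=\bar\otimes_{i\in I}L(H)$. Equivalently, letting $N_i\subseteq L(\Sg)$ be the image under $\theta_0'$ of the $i$-th tensor factor in $\bar\otimes_{i\in I}L(H)$, we have $L(\Sg)=\bar\otimes_{i\in I}N_i$ with each $N_i\cong L(H)$ and $\mathrm{Ad}(v_{\delta(\g)})(N_i)=N_{\g\cdot i}$ for all $\g\in\G$, $i\in I$.

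Next I would exploit that $H$ is not just property (T) but also icc, weakly amenable and biexact --- precisely the hypotheses of Theorem~\ref{infprodrig} --- and that $I$ is infinite. After fixing a bijection $I\cong\mathbb N$, the $\ast$-isomorphism $\theta_0'$ identifies $L(\Sg)$ with $L(\oplus_{i\in I}H)$, so Theorem~\ref{infprodrig} applies (with the ``arbitrary group'' there taken to be $\Sg$) and yields a group decomposition $\Sg=(\oplus_{i\in I}\Sg_i)\oplus A$, where each $\Sg_i$ is icc, weakly amenable, property (T) and $A$ is trivial or icc amenable, together with amplification-level identifications $L(H)^{t_i}\cong L(\Sg_i)$ (one per slot $N_i$) that are implemented, for each finite truncation, by a unitary compatible with the decomposition $L(\Sg)=\bar\otimes_{i\in I}N_i$. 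Since the $\Sg_i$ are icc with property (T) they have trivial amenable radical, so $A$ is the amenable radical of $\Sg$; in particular $A$ is characteristic in $\Sg$, hence normal in $\La$ and preserved by the conjugation action of $\Phi$.

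The crux --- and the step I expect to be the main obstacle --- is to upgrade this decomposition so that it becomes $\Phi$-equivariant, reconciling the (a priori non-canonical) infinite direct-sum splitting of $\Sg$ with the permutation action of $\Phi$ on the slots $N_i$. Here I would combine three ingredients: the relation $\mathrm{Ad}(v_{\delta(\g)})(N_i)=N_{\g\cdot i}$ together with amenability of each ${\rm Stab}_\G(i)$ (which rules out weak-mixing arguments but still suffices to keep track of the slots); the uniqueness up to amplification and unitary conjugacy in Theorem~\ref{TwoFoldedProduct} and in the iteration that proves Theorem~\ref{infprodrig}; and the group-theoretic uniqueness of such direct-sum decompositions from \cite[Theorems~4.6 and 4.7]{CdSS17} (already invoked to prove Theorem~\ref{TwoFoldedProduct}), keeping careful track of how the amplification discrepancies get absorbed into the amenable tail $A$. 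This should yield $\delta(\g)\,\Sg_i\,\delta(\g)^{-1}=\Sg_{\g\cdot i}$ for all $\g\in\G$, $i\in I$. As all the $N_i$ are isomorphic to the single algebra $L(H)$ and --- when $\G\ca I$ is transitive, as in the motivating plain wreath product examples --- lie in a single $\Phi$-orbit, the $\Sg_i$ may be taken all isomorphic to a common group $\Sg_0$, and the restriction of the $\Phi$-action to $\oplus_{i\in I}\Sg_i\cong\Sg_0^{(I)}$ is precisely the generalized Bernoulli action $\beta$ coming from $\G\ca I$; write $\alpha$ for the restriction of the $\Phi$-action to $A$. Then $\La=(\Sg_0^{(I)}\oplus A)\rtimes_{\beta\oplus\alpha}\Psi$ with $\Psi:=\Phi$.

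Finally I would assemble the statement: put $\eta:=\zeta$, $u:=t$, and let $\theta_0\colon L(H^{(I)})\to L(\Sg_0^{(I)}\oplus A)$ be $\theta_0'$ followed by the group-level identification $L(\Sg)=L(\Sg_0^{(I)}\oplus A)$; then the intertwining identity $\theta(xu_\g)=\eta(\g)\,u\,\theta_0(x)\,v_{\delta(\g)}\,u^*$ is inherited verbatim from the first step, with $\delta\colon\G\to\Psi$ the group isomorphism. One checks that $\Sg_0$ is non-amenable icc (being infinite with property (T)), that $\Psi\cong\G_1\times\G_2$ is non-amenable icc, and that $A$ is amenable and either trivial or icc, which completes the proof.
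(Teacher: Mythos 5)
Your overall architecture is exactly the paper's: first invoke Theorem~\ref{semidirprodrig2} to get $\La=\Sg\rtimes\Psi$ with $\theta(u_\g)=\eta(\g)v_{\delta(\g)}$ and $\theta(L(H^{(I)}))=L(\Sg)$, then feed $L(\Sg)\cong L(\oplus_I H)$ into Theorem~\ref{infprodrig} to split $\Sg=(\oplus_{i\in I}\Sg_i)\oplus A$ with $A$ icc amenable (or trivial). However, at the step you yourself flag as the main obstacle --- upgrading this splitting to a $\Psi$-equivariant one, i.e.\ proving $\beta_{\delta(\g)}(\Sg_i)=\Sg_{\g\cdot i}$ --- you only gesture at ``uniqueness up to amplification and unitary conjugacy'' from Theorem~\ref{TwoFoldedProduct} and \cite[Theorems 4.6, 4.7]{CdSS17}, and that route runs straight into the amplification-bookkeeping problem you mention: the scalars $t_i$ produced by Theorem~\ref{infprodrig} for the slot $i$ and for the slot $\g\cdot i$ have no a priori relation, so unitary-conjugacy uniqueness of tensor factors only identifies $L(\Sg_{\g\cdot i})$ with an amplification of $L(\beta_{\delta(\g)}(\Sg_i))$, which does not by itself give equality of the subgroups.

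The missing idea, which is how the paper closes this gap, is to work at the level of \emph{normal} subgroups of $\La$ rather than with uniqueness of tensor decompositions. From the two relations $uL(\Sg_j)^{t_j}u^*=\theta(L(H_j))=v_{\delta(\g)}uv_{\delta(\g)}^*L(\beta_{\delta(\g)}(\Sg_i))^{t_i}v_{\delta(\g)}u^*v_{\delta(\g)}^*$ one extracts the mutual intertwinings $L(\Sg_j)\prec_M L(\beta_{\delta(\g)}(\Sg_i))$ and $L(\beta_{\delta(\g)}(\Sg_i))\prec_M L(\Sg_j)$; since both $\Sg_j$ and $\beta_{\delta(\g)}(\Sg_i)$ are normal in $\La$, \cite[Lemma 2.2]{CI17} converts these into \emph{group-level commensurability} of $\Sg_j$ and $\beta_{\delta(\g)}(\Sg_i)$, with no amplification constants surviving. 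An elementary argument with virtual centers then upgrades commensurability to actual equality: $\beta_{\delta(\g)}(\Sg_i)$ sits in some $\Sg^J\oplus A$ with $J$ finite containing $j$, and quotienting by the finite-index intersection $\Sg_j\cap\beta_{\delta(\g)}(\Sg_i)$ exhibits $\beta_{\delta(\g)}(\Sg_i)/(\Sg_j\cap\beta_{\delta(\g)}(\Sg_i))$ as a finite normal subgroup of $\Sg_j/(\Sg_j\cap\beta_{\delta(\g)}(\Sg_i))\oplus\Sg_{J\setminus\{j\}}\oplus A$ whose second and third summands are icc, forcing it into the first summand, i.e.\ $\beta_{\delta(\g)}(\Sg_i)\leqslant\Sg_j$; symmetry gives equality. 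Without this (or an equivalent) mechanism your proof is incomplete at its central step. Two minor points: your observation that non-transitive $\G\ca I$ would only give $\Sg_i\cong\Sg_0$ along orbits is fair (the paper's argument also implicitly uses transitivity there), and your identification of $A$ as $C_\Sg(\oplus_I\Sg_i)$, hence $\Psi$-invariant, matches the paper.
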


\begin{proof} Let $G= H\wr_I \G$ satisfies the conditions stated in the Theorem \ref{semidirprodrig2}. Let  $\La $ be an arbitrary group and assume that  $\theta : L(G) \ra L(\La)$ is an $\ast$-isomorphism. Using Theorem \ref{semidirprodrig2}, after composing $\theta$ with an inner automorphism of $M$ one can find a semidirect product decomposition of $\La=\Sg\rtimes_\beta \Psi$, a group isomorphism $\delta : \G\ra \Psi$, and a character $\eta: \G\ra \mathbb T$ such that \begin{equation}\label{402}\theta(u_\g)=\eta(\g)v_{\delta(\g)}\text{ for all }\g\in \G.\end{equation} Moreover, we have $\theta (L(\Sg))=L(H^{(I)})$. Since $H$ are icc, biexact, weakly amenable, property (T) groups then Theorem \ref{main1} implies that one can decompose $\Sg=\oplus_{i\in I} \Sg_i \oplus A$, where $A$ is trivial or amenable icc. In addition, for every finite subset $F\subset I$ there exist $u\in \mathcal U(L(\La))$ and scalars $t_i>0$ for $i\in F$ such that
\begin{equation}\label{401}\begin{split}
&u L(\Sg_i)^{t_i}u^*=\theta(L(H_i)) \text{ for all }i\in F,\text{ and}\\
&uL(\oplus_{i\in I\setminus F } \Sg_i \oplus A)^{\prod_{i\in F} t^{-1}_i}u^*=\theta(L(H_{I\setminus F})).
\end{split}
\end{equation}

Next we show that $\Sg_i\cong \Sg_0$ for all $i$ and there exists an action $\Psi \ca^\alpha A$ such that $\La = (\Sg^{(I)} \oplus A)\rtimes_{b\oplus \alpha} \Psi$, where $\Psi\ca^{b} \Sg^{(I)}$ is the generalized Bernoulli action induced by $\G\ca I$. 
Fix $i,j\in I$ and $\g\in \G$ such that $\g i=j$. Let $F\subset I$ be a finite set such that $\{i,j\}\subseteq F$. Using the first relation of \eqref{401} for $i$ and $j$ in combination with \eqref{402} we get  \begin{equation*}u L(\Sg_j)^{t_j}u^*=\theta(L(H_j))=\theta(u_\g) u L(\Sg_i)^{t_i} u^*\theta(u^*_\g)=v_{\delta(\g)} u v_{\delta(\g)}^*L(\beta_{\delta(\g)}(\Sg_i))^{t_i} v_{\delta(\g)} u^*v^*_{\delta(\g)}.
\end{equation*}
In particular this relation implies that $L(\Sg_j)\prec_M L(\beta_{\delta(\g)}(\Sg_i)) \text{ and }  L(\beta_{\delta(\g)}(\Sg_i))\prec_M L(\Sg_j)$. Since $\Sg_j,\beta_{\delta(\g)}(\Sg_i)$ are normal subgroups of $\La$ these intertwinings combined with \cite[Lemma 2.2]{CI17} imply that $\Sg_j$ is commensurable with $\beta_{\delta(\g)}(\Sg_i)$; in other words \begin{equation}\label{403}[\Sg_j: \Sg_j\cap \beta_{\delta(\g)}(\Sg_i) ]<\infty\text{ and } [\beta_{\delta(\g)}(\Sg_i): \Sg_j\cap \beta_{\delta(\g)}(\Sg_i)]<\infty.
\end{equation}  
Since $\beta_{\delta(\g)}(\Sg_i)\leqslant \oplus_{i\in I} \Sg_i \oplus A$ using the second relation in \eqref{403} there exists a finite subset $j\in J\subset I$ so that $\beta_{\delta(\g)}(\Sg_i)\leqslant \Sg^J \oplus A$. Thus we have the following normal subgroups $\Sg_j\cap \beta_{\delta(\g)}(\Sg_i)\lhd \beta_{\delta(\g)}(\Sg_i)\lhd \Sg^J\oplus A$. Taking the quotient we get a finite normal subgroup\begin{equation*}\beta_{\delta(\g)}(\Sg_i)/\Sg_j\cap \beta_{\delta(\g)}(\Sg_i) \lhd (\Sg^J\oplus A)/\Sg_j\cap \beta_{\delta(\g)}(\Sg_i)= \Sg_j/\Sg_j\cap \beta_{\delta(\g)}(\Sg_i)\oplus \Sg_{J\setminus\{j\}}\oplus A.
\end{equation*}
Hence $\beta_{\delta(\g)}(\Sg_i)/\Sg_j\cap \beta_{\delta(\g)}(\Sg_i) \lhd vZ(\Sg_j/\Sg_j\cap \beta_{\delta(\g)}(\Sg_i)\oplus \Sg_{J\setminus\{j\}}\oplus A) $. However, since  $\Sg_{J\setminus\{j\}}\oplus A)$ is icc by \eqref{403} we have  $vZ(\Sg_j/\Sg_j\cap \beta_{\delta(\g)}(\Sg_i)\oplus \Sg_{J\setminus\{j\}}\oplus A)=\Sg_j/\Sg_j\cap \beta_{\delta(\g)}(\Sg_i)$. Altogether these relations show that $\beta_{\delta(\g)}(\Sg_i)/\Sg_j\cap \beta_{\delta(\g)}(\Sg_i) \lhd \Sg_j/\Sg_j\cap \beta_{\delta(\g)}(\Sg_i)$ and hence $\beta_{\delta(\g)}(\Sg_i)\leqslant \Sg_j$. Similarly one can show that $\beta_{\delta(\g)}(\Sg_i)\geqslant \Sg_j$ and hence $\beta_{\delta(\g)}(\Sg_i)= \Sg_j$. This shows that $\Sg_i \cong \Sg_0$ for all $i$. Moreover there is an action $\Psi\ca I$ which induces a generalized Bernoulli action $\Psi\ca^b \oplus_{i\in I} \Sg_i$. Also since the action of $\Psi\ca^\beta \Sg$ leaves the subgroup $\oplus_I\Sg_i$ invariant then $\Psi$ will also leave invariant $C_\Sg(\oplus_I \Sg_i)=A$. Hence there exists an action $\Psi\ca^\alpha A$ such that $\La=(\Sg_0^{(I)}\oplus A)\rtimes_{b\oplus \alpha} \Psi$. The remaining part of the statement follows directly from the above considerations.\end{proof}

\noindent \emph{Proof of Corollary \ref{maincor2}}.  This follows proceeding in the same manner as in the proof of Corollary \ref{maincor1} and using Theorem \ref{wprig2}. $\hfill\square$

\begin{Remarks}\label{possibleex} When considering generalized Bernoulli actions it is clear the conditions presented in the statements of Theorems \ref{semidirprodrig2}, \ref{wprig2} are satisfied when all the stabilizers of action $\G\ca I$ are finite. 

On the other hand, if one wants to tackle the infinite amenable stabilizers situation, producing examples seems far more challenging. In this direction we would like to present a possible approach for this which was suggested to us by Professor Denis Osin during the AIM workshop ``Classification of group von Neumann algebras''. Consider $\Sigma_0$ an icc finitely generated amenable group. By \cite[Theorem 1.2]{AMO06} there exists an icc supragroup $\Sg_0<\G_0$ that has property (T) and is hyperbolic relatively to $\Sg_0$. Hence by \cite{Oz06} it follows that $\G_0$ is biexact. Let $\G=\G_0\times \G_0$ and consider the diagonal subgroup $\Sg=diag(\Sg_0)<\G$. Also let $\G\ca I= \G/\Sg$ be the action by left multiplication on the right cosets $\G/\Sg$. Since $\Sg_0<\G_0$ is icc and almost malnormal it follows that the one-sided quasinormalizer satisfies $QN^{(1)}_\G(\Sg)=\Sg$. In turn this is equivalent with condition c) in Theorem \ref{strongrigsemidirect1}. Finally one can check that condition b) in Theorems \ref{semidirprodrig2}, \ref{wprig2} is equivalent with the property that the group $\Sg$ has finite height in $\G$ (or it is almost malnormal). This is equivalent to the following property: there exists $k\in \mathbb N$ such that any subset $F<\Sg_0$ with $|F|\geq k$ has finie centralizer $C_{\G_0}(F)$. While f.g. groups like this exist in general (e.g.\ monster groups) it is unclear if one can construct amenable examples. 

In any case a possible positive answer to this last group theoretic question would lead to a class of generalized wreath products constructions with non-amenable core that are recognizable from the von Neumann algebraic setting. Indeed, Theorem \ref{semidirprodrig1} together with the argument from the proof of Claim \ref{contcoreslot} in Theorem \ref{strongrigsemidirect1} give the following 
\end{Remarks}

\begin{cor}\label{wprodrig3}Let $H_0, \G$  be icc, property (T) groups. Also assume that $\G=\G_1\times \G_2$, where $\G_i$ are nonamenable biexact groups for all $i=1,2$. Let $\Gamma \curvearrowright I$ be an action on a countable set $I$ satisfying the following conditions:
\begin{enumerate}
\item [a)] The stabilizer ${\rm Stab}_\G(i)$ is amenable for each $i \in I$;
\item [b)] There is $k\in \mathbb N$ such that for each $J\subseteq I$ satisfying  $|J|\geq k$ we have $|{\rm Stab}_\G(J)|<\infty$.
\item [c)] The orbit ${\rm Stab}_\G(i)\cdot j$ is infinite for all $i\neq j$. 
\end{enumerate}
Denote by $G=H_0 \wr_I \G$ the corresponding generalized wreath product. Let $\La$ be any torsion free group and let $\theta: L(G)\ra L(\La)$ be a $\ast$-isomorphism. Then $\La$ admits a wreath product decomposition $\La = \Sg_0 \wr_I \Psi$ satisfying all the properties enumerated in a)-c). In addition there exist a group isomorphism $\rho:\G \ra \Psi$, a character $\eta:\G\ra \mathbb T$, a $\ast$-isomorphism $\theta_0: L(H_0)\ra L(\Sg_0)$ and a unitary $v\in L(\La)$ such that for every $x\in L(H_0^{(I)})$ and $\g\in \G$ we have \begin{equation*}
\theta(x u_\g) =\eta(\g)v^* \theta^{\bar\otimes I}_0(x) v_{\delta(\g)}v.\end{equation*} 
Here $\{u_\g \,|\, \g\in \G\}$ and $\{v_\la \,|\, \la\in \Psi\}$ are the canonical group unitaries of $L(\G)$ and $L(\Psi)$, respectively. 

\end{cor}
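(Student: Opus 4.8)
The plan is to obtain the wreath product decomposition in two stages: first invoke the semidirect product rigidity already available, and then split the core algebra one $I$-coordinate at a time, exactly as in the argument for Claim~\ref{contcoreslot}. Write $M=L(\La)=\theta(L(G))$, set $A=\theta(L(H_0^{(I)}))\subseteq M$, and for each $i\in I$ let $A^i=\theta(L(H_0^i))$ be the image of the $i$-th tensor factor of $L(H_0^{(I)})=\bar\otimes_{i\in I}L(H_0)$. Since $H_0$, $\G=\G_1\times\G_2$ and the action $\G\ca I$ verify all hypotheses of Theorem~\ref{semidirprodrig2} (its conditions a) and b) are our a) and b)), that theorem gives --- after composing $\theta$ with an inner automorphism of $M$ --- a semidirect product decomposition $\La=\Sg\rtimes\Phi$, a group isomorphism $\rho\colon\G\to\Phi$, a character $\eta\colon\G\to\mathbb{T}$ and a $\ast$-isomorphism $\vartheta\colon L(H_0^{(I)})\to L(\Sg)$, in such a way that $\theta$ restricts to $\vartheta$ on $L(H_0^{(I)})$, $A=L(\Sg)$, and $\theta(u_\g)=\eta(\g)v_{\rho(\g)}$ for all $\g\in\G$. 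Moreover, as part of that proof (via Theorem~\ref{semidirprodrig1}(c) and the argument for Claim~\ref{contcore}) one already has, for the comultiplication $\Delta\colon M\to M\bar\otimes M$ along $\La$, the containment $\Delta(A)\subseteq A\bar\otimes A$.

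The decisive new point, and where hypothesis c) enters, is to refine this to $\Delta(A^i)\subseteq A^i\bar\otimes A^i$ for every $i\in I$. Fix $i$. For $\g\in{\rm Stab}_\G(i)$ the generalized Bernoulli shift fixes the $i$-th coordinate, so $u_\g$ centralizes $L(H_0^i)$; hence $\theta(u_\g)$ centralizes $A^i$, the unitary $\theta(u_\g)\otimes\theta(u_\g)$ centralizes $A^i\bar\otimes A^i$, and --- $\Delta$ being a homomorphism and $\Delta(\theta(u_\g))$ a scalar multiple of $\theta(u_\g)\otimes\theta(u_\g)$ --- it also centralizes $\Delta(A^i)$. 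So every element of $\Delta(A^i)$ is an ${\rm Ad}\{\theta(u_\g)\otimes\theta(u_\g)\}_{\g\in{\rm Stab}_\G(i)}$-invariant vector. On the other hand $\Delta(A^i)\subseteq\Delta(A)\subseteq A\bar\otimes A$, and on $A\bar\otimes A$ the group ${\rm Stab}_\G(i)$ acts trivially on $A^i\bar\otimes A^i$ and as the tensor square of a generalized Bernoulli shift over $I\setminus\{i\}$ on a complementary tensor factor; all orbits of ${\rm Stab}_\G(i)$ on $I\setminus\{i\}$ being infinite by c), this representation is weakly mixing on $L^2(A\bar\otimes A)\ominus L^2(A^i\bar\otimes A^i)$ and so has no nonzero invariant vectors there. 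Hence $\Delta(A^i)\subseteq L^2(A^i\bar\otimes A^i)\cap(M\bar\otimes M)=A^i\bar\otimes A^i$, and \cite[Lemma~7.1.2]{IPV10} produces a subgroup $\Sg_i\leqslant\La$ with $A^i=L(\Sg_i)$.

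To assemble the wreath product: the $A^i$ commute pairwise, generate $A$, and satisfy $A^i\cap\bigvee_{j\ne i}A^j=\mathbb{C}1$, so the $\Sg_i$ commute pairwise, generate $\Sg$, and have trivial complementary intersections, whence $\Sg=\bigoplus_{i\in I}\Sg_i$. From $\theta(u_\g)=\eta(\g)v_{\rho(\g)}$ and $u_\g L(H_0^i)u_\g^*=L(H_0^{\g i})$ we get $\rho(\g)\Sg_i\rho(\g)^{-1}=\Sg_{\g i}$ for all $\g$; by transitivity of $\G\ca I$ all $\Sg_i$ are conjugate in $\La$, hence isomorphic to a fixed $\Sg_0$, and --- since ${\rm Stab}_\G(i)$ acts trivially on $L(H_0^i)$, so $\rho(\g)|_{\Sg_i}$ depends only on $\g i$ --- the conjugation action of $\Psi:=\Phi$ on $\Sg=\bigoplus_i\Sg_i$ is precisely the generalized Bernoulli action over $\Psi\ca I$ transported from $\G\ca I$ along $\rho$. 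Thus $\La=\Sg_0\wr_I\Psi$, and since $\Psi\ca I$ has the same orbits and stabilizers as $\G\ca I$ it again satisfies a)--c); torsion-freeness of $\La$ forces $\Sg_0$ and $\Psi$ to be torsion free. Taking $\theta_0\colon L(H_0)\to L(\Sg_0)$ to be the restriction of $\vartheta$ to a single coordinate (one checks $\vartheta$ then decomposes as $\theta_0^{\bar\otimes I}$) and $v$ the accumulated inner unitary, the identity $\theta(xu_\g)=\eta(\g)v^*\theta_0^{\bar\otimes I}(x)v_{\rho(\g)}v$ follows.

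The step I expect to be the main obstacle is the weak mixing in the second paragraph. Phrased over the whole $L^2(M\bar\otimes M)$, as in Claim~\ref{contcoreslot}, it is genuinely delicate once the stabilizers are only amenable rather than finite index: a nontrivial FC-centre of ${\rm Stab}_\G(i)$ creates extra ${\rm Ad}(u_\g)$-invariant vectors in $L^2(M)$, so weak mixing there need not hold. The remedy --- and the reason the order of the two paragraphs matters --- is to first use $\Delta(A)\subseteq A\bar\otimes A$ to trap $\Delta(A^i)$ inside $L^2(A\bar\otimes A)$, where the ${\rm Stab}_\G(i)$-action is a bona fide generalized Bernoulli shift over $I\setminus\{i\}$ with all orbits infinite and weak mixing is automatic. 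Securing that localization is the technical core inherited from the earlier results, where property (T) of $H_0$ (rigidity of $\Delta(L(H_0))$) and biexactness of $\G_1,\G_2$ (the group-level normalization $\theta(u_\g)=\eta(\g)v_{\rho(\g)}$) are the ingredients actually spent.
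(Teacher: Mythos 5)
Your proposal is correct and follows essentially the same route the paper indicates for this corollary: apply the semidirect product rigidity of Theorem \ref{semidirprodrig2} to get $\La=\Sg\rtimes\Phi$ with $A=L(\Sg)$ and $\theta(u_\g)=\eta(\g)v_{\rho(\g)}$, then use hypothesis c) and the weak mixing argument of Claim \ref{contcoreslot} to localize each $\Delta(A^i)$ into $A^i\bar\otimes A^i$, identify $A^i=L(\Sg_i)$ via \cite[Lemma 7.1.2]{IPV10}, and assemble the generalized Bernoulli structure. Your observation that the weak mixing should be run on $L^2(A\bar\otimes A)\ominus L^2(A^i\bar\otimes A^i)$ after first trapping $\Delta(A^i)$ inside $A\bar\otimes A$ (rather than on all of $L^2(M\bar\otimes M)$, where amenable stabilizers could contribute extra invariant vectors) is a careful and welcome refinement of the same argument, not a different proof.
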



\begin{thebibliography}{999999!}

\bibitem[AMO06]{AMO06} G. Arzhantseva, A. Minasyan, D. Osin, \textit{The SQ-universality and residual properties of relatively hyperbolic groups}, J. of Algebra, {\bf 315} (2007), 165-177.

\bibitem[BO06]{BO06} I. Belegradek, D. Osin, \textit{Rips construction and Kazhdan property (T)}, Groups, Geom., Dynam. {\bf 2} (2008), 1--12.

\bibitem[B14]{B14}M. Berbec, \textit{$W^*$-superrigidity for wreath products with groups having positive first $\ell^2$-Betti number}, Internat. J. Math. {\bf 26} (2015), 1550003, 27 pp.

 \bibitem[BV13]{BV13}M. Berbec, S. Vaes, \textit{$W^*$-superrigidity for group von Neumann algebras of left-right wreath products}, Proc. Lond. Math. Soc. (3) {\bf 108} (2014), 1116--1152.










\bibitem[BKKO14]{BKKO14} E. Breuillard, M. Kalantar, M. Kennedy, N. Ozawa \textit{$C*$-simplicity and the unique trace property for discrete groups}, Publ. Math. Inst. Hautes Études Sci. {\bf 126} (2017), 35--71.

\bibitem[BO08]{BO08} N. P. Brown and N. Ozawa, \textit{$\mathrm{C}^\ast$-algebras and finite-dimensional approximations}, Graduate Studies in Mathematics, vol. 88, AMS, Providence, RI.

\bibitem[Bo12]{Bo12}R, Boutonnet, \textit{$W^*$-superrigidity of mixing Gaussian actions of rigid groups}, Adv. Math., \textbf{244} (2013), 69-9-0. 
\bibitem[BC14]{BC14} R, Boutonnet and A. Carderi, \textit{Maximal amenable von Neumann subalgebras arising from maximal amenable subgroups}, to appear in Geom.  Funct. Anal., ArXiv:1411.4093.
\bibitem[BHR12]{BHR12}R. Boutonnet, C. Houdayer, and S. Raum, \textit{Amalgamated free product type III factors with at most one Cartan subalgebra}.  Compos. Math. \textbf{150} (2014), 143--174.

\bibitem[CdSS16]{CdSS16} I. Chifan, R. de Santiago, and T. Sinclair, \textit{ $W^*$-rigidity for the von Neumann algebras of products of hyperbolic groups}, Geom. Funct. Anal. {\bf 26} (2016), 136-159.




 



\bibitem[CdSS17]{CdSS17} I. Chifan, R. de Santiago, and W. Sucpikarnon, \textit{Tensor product decompositions of II$_1$ fastors arising from extensions of amalgamated free product groups}, arXiv:1710.05019. 

\bibitem[CI08]{CI08}I. Chifan and A. Ioana, \textit{Ergodic subequivalence relations induced by a Bernoulli action}, Geom. Funct. Anal. \textbf{20} (2010), 53--67.
\bibitem[CH08]{CH08} I. Chifan and C. Houdayer, \textit{Bass-Serre rigidity results in von Neumann algebras}, Duke Math. J. \textbf{ 153} (2010), 23--54.
\bibitem [CK14]{CS14}I. Chifan and Y. Kida, \textit{$OE$ and $W^*$ superrigidity results for actions by surface braid groups}, Preprint  February 2014, ArXiv:1502.02391.


\bibitem[CKP14]{CKP14} I. Chifan, Y. Kida, and S. Pant, \textit{Structural results for the von Neumann algebras associated with surface braid groups}, preprint, Int. Math. Res. Not. 2016 (2016), 4807--4848.

\bibitem[CI17]{CI17} I. Chifan, A. Ioana, \textit{Amalgamated free product rigidity for group von Neumann algebras}, to appear in Adv. Math.
(arXiv:1705.07350).

\bibitem[CS11]{CS11} I. Chifan and T. Sinclair, \textit{On the structural theory of $\textrm{II}_1$factors of negatively curved groups}, Ann. Sci. \'{E}c. Norm. Sup. {\bf 46} (2013), 1--33.

\bibitem[CSU11]{CSU11} I. Chifan, T. Sinclair, and B. Udrea, \textit{On the structural theory of $\textrm{II}_1$ factors of negatively curved groups, II. Actions by product groups}, Adv. Math. {\bf 245} (2013), 208--236.

\bibitem [CP10]{CP10} I. Chifan and J. Peterson, \textit{Some unique group measure space decomposition results},   Duke Math. J. \textbf{162} (2013), 1923--1966.
\bibitem[CIK13]{CIK13}I. Chifan, A. Ioana, and Y. Kida, \textit{$W^*$-superrigidity for arbitrary actions of central quotients of braid groups}, Math. Ann. \textbf {361} (2015), 563--582.

\bibitem[CSU13]{CSU13} I. Chifan, T. Sinclair and B. Udrea, \textit{Inner amenability for groups and central sequences in factors}, Ergodic Theory Dynam. Systems \textbf{36} (2016), 1106--1029.

\bibitem[Ch79]{Ch79} E. Christensen, \textit{Subalgebras of a finite algebra}, Math. Ann. {\bf 243} (1979) 17-29. 

\bibitem[Co76]{Co76} A. Connes, {\it Classification of injective factors}, Ann. Math. {\bf 104} (1976) 73-115.

\bibitem[CH89]{CH89} M. Cowling and U. Haagerup, {\it Completely bounded multipliers of the Fourier algebra of a simple Lie group of real rank one}, Invent. Math. {\bf 96} (1989) 507-549.


\bibitem[DHI16]{DHI16} D.~Drimbe, D.~Hoff, A.~Ioana, \textit{Prime II$_1$ factors arising from irreducible lattices in products of rank one simple Lie groups}, arXiv:1611.02209.
















\bibitem[JGS10]{JGS10} J. Fang, S. Gao, and R. Smith, \textit{The Relative Weak Asymptotic Homomorphism Property for Inclusions of Finite von Neumann Algebras}, Intern. J. Math. {\bf 22} (2011) 991--1011.

\bibitem[FV10]{FV10} P. Fima and S. Vaes, \textit{HNN extensions and unique group measure space decomposition of II$_1$ factors}, Trans. Amer. Math. Soc. \textbf{364} (2012), 2601--2617.

 \bibitem[G96]{G96} L. Ge, \textit{On maximal injective subalgebras of factors}, Adv. Math. {\bf 118} (1996), 34--70. 

\bibitem[HPV10]{HPV10}C. Houdayer, S. Popa, and S. Vaes, \textit{A class of groups for which every action is $W^*$-superrigid}, Groups Geom. Dyn. \textbf {7} (2013), 577--590.
\bibitem[HV12]{HV12}C. Houdayer and S. Vaes, \textit{Type III factors with unique Cartan decomposition}, J. Math. Pures Appl. \textbf{100} (2013), 564--590.

\bibitem[HU15]{HU15} C. Houdayer, Y. Ueda, \textit{Rigidity of free product von Neumann algebras}, to appear in Compos. Math. 


\bibitem[Io10]{Io10} A. Ioana, \textit{$W^*$-superrigidity for Bernoulli actions of property (T) groups}, J. Amer. Math. Soc. \textbf{24} (2011), 1175--1226.

\bibitem[Io06]{Io06} A. Ioana, \textit{Rigidity results for wreath product II$_1$ factors}, J. Funct. Anal. {\bf 252} (2007), 763--791.

\bibitem[IPP05]{IPP05} A. Ioana, J. Peterson, and S. Popa, \textit{Amalgamated free products of weakly rigid factors and calculation of their symmetry groups}. Acta Math. {\bf 200} (2008), 85--153. 

\bibitem[Io12a]{Io12a} A. Ioana, \textit{Cartan subalgebras of amalgamated free product II$_1$ factors}, With an appendix by Ioana and Stefaan Vaes. Ann. Sci. Ec. Norm. Super. (4) {\bf 48} (2015), 71--130.
 

\bibitem[Io11]{Io11} A. Ioana, \textit{Uniqueness of the group measure space decomposition for Popa's $\mathcal H\mathcal T$ factors}, Geom. Funct. Anal.  \textbf{22} (2012), 699--732. 

 \bibitem[Io12]{Io12}A. Ioana, \textit{Classification and rigidity for von Neumann algebras}, European Congress of Mathematics, 601-625, Eur. Math. Soc., Zurich, 2013, 46--02. 


\bibitem[Io17]{Io17icm} A. Ioana, \textit{Rigidity for von Neumann algebras}, submitted to Proceedings of the ICM 2018.

\bibitem[IPV10]{IPV10} A. Ioana, S. Popa, and S. Vaes, \textit{A Class of superrigid group von Neumann algebras}, Ann. of Math. (2) {\bf 178} (2013), 231--286.

\bibitem[Is12]{Is12}Y. Isono, \textit{Examples of factors which have no Cartan subalgebras}, to appear in Trans. Amer. Math. Soc., ArXiv1209.1728.
\bibitem[Is14]{Is14}Y. Isono, \textit{Some prime factorization results for free quantum group factors}, to appear in J. Reine Angew. Math., ArXiv:1401.6923.

\bibitem[Is16]{Is16} Y. Isono, \textit{On fundamental groups of tensor product II$_1$ factors}, Preprint 2016, arXiv:1608.06426.








\bibitem[Jo81]{Jo81} V.F.R. Jones, \textit{Index for subfactors}, Invent. Math. {\bf 72} (1983), 1--25.

















\bibitem[KV15]{KV15} A. S. Krogager, S. Vaes, \textit{A class of II$_1$ factors with exactly two group measure space decompositions}, J. Math. Pures Appl. (9) {\bf 108} (2017),  88--110.



\bibitem[Ma79]{Ma79} G.A. Margulis, {\it Finiteness of quotients of discrete groups}, Func. Anal. Appl. {\bf 13} (1979), 178-187.

\bibitem[MvN43]{MvN43} F. J. Murray and  J. von Neumann, {\it On rings of operators}, IV, Ann. of Math. {\bf 44} (1943), 716-808.








\bibitem[Oz03]{Oz03} N. Ozawa, \textit{Solid von Neumann algebras}, Acta Math., {\bf 192} (2004), 111--117. 

\bibitem[Oz05]{Oz05} N. Ozawa, \textit{ A Kurosh type theorem for type II$_1$ factors}, Int. Math. Res. Not., Volume 2006, Article ID97560 (21 pages). 

\bibitem[Oz06]{Oz06} N. Ozawa, {\it Boundary amenability of relatively hyperbolic groups}, Topology Appl. {\bf 53} (2006) 2624-2630.
 
\bibitem[OP03]{OP03} N. Ozawa and S. Popa, \textit{Some prime factorization results for type II$_1$ factors}, Invent. Math., {\bf 156} (2004), 223--234. 

\bibitem [OP07]{OP07} N. Ozawa and S. Popa, \textit{On a class of II$_1$ factors with at most one Cartan subalgebra}, Ann. of Math. \textbf{172} (2010), 713-749.

\bibitem[OP08]{OP08}N. Ozawa and S. Popa, \textit{On a class of II$_1$ factors with at most one Cartan subalgebra. II}, Amer. J. Math. \textbf {132} (2010), 841--866.




\bibitem[Pe06]{Pe06} J. Peterson, \textit{$L^2$-rigidity in von Neumann algebras}, Invent. Math. \textbf{175} (2009), no. 2, 417--433.
\bibitem[Pe09]{Pe09} J. Peterson, \textit{Examples of group actions which are virtually $W*$-superrigid}, Preprint February 2009, ArXiv:1002.1745.





\bibitem[Po94]{Po94} S. Popa, \textit{Classification of subfactors and their endomorphisms}. CBMS Regional Conference Series in Mathematics, 86. Published for the Conference Board of the Mathematical Sciences, Washington, DC; by the American Mathematical Society, Providence, RI, 1995. x+110 pp. 

\bibitem[Po99]{Po99} S. Popa, \textit{Some properties of the symmetric enveloping algebra of a factor, with applications to amenability and property (T)}, Doc. Math. {\bf 4} (1999), 665--744.

\bibitem[Po01]{Po01} S. Popa, \textit{On a class of type II$_1$ factors with Betti numbers invariants}, Ann. of Math. \textbf{163} (2006), 809-899.

\bibitem[Po03]{Po03} S. Popa, \textit{Strong Rigidity of II$_1$ Factors Arising from Malleable Actions of $w$-Rigid Groups I}, Invent. Math. \textbf{165}  (2006), 369--408.
 \bibitem[Po04]{Po04} S. Popa, \textit{Strong Rigidity of II$_1$ Factors Arising from Malleable Actions of $w$-Rigid Groups II}, Invent. Math. \textbf{165} (2006), 409--453.

\bibitem[Po08]{Po08} S. Popa, \textit{On the superrigidity of malleable action with spectal gap}, J. Amer. Math. Soc. \textbf {21} (2008), 981--1000.

\bibitem[Po06]{Po06}S. Popa, \textit{On Ozawa's property for free group factors}, Int. Math. Res. Not. Vol. 2007 : article ID rnm036, 10 pages.

\bibitem[Po05]{Po05} S. Popa, \textit{Cocycle and orbit equivalence superrigidity for malleable actions of $w$-rigid groups}, Invent. Math. {\bf 170} (2007), 243--295.

\bibitem[Po06]{Po06} S. Popa, \textit{Deformation and rigidity for group actions and von Neumann algebras}, International Congress of Mathematicians. Vol. I, 445--477, Eur. Math. Soc., Z\"urich, 2007.




\bibitem[PV09]{PV09} S. Popa and S. Vaes, \textit{Group measure space decomposition of II$_1$ factors and $W^*$-superrigidity}, Invent. Math. {\bf 182} (2010), 371--417.

\bibitem[PV11]{PV11}S. Popa and S. Vaes, \textit{Unique Cartan decomposition for $\textrm{II}_1$ factors arising from arbitrary actions of free groups}, Acta Math. \textbf{212} (2014), 141--198.

\bibitem[PV12]{PV12} S. Popa and S. Vaes, \textit{Unique Cartan decomposition for $\textrm{II}_1$ factors arising from arbitrary actions of hyperbolic groups}, J. Reine Angew. Math. {\bf 694} (2014), 215--239. 

\bibitem[Si10]{Si10} T. Sinclair, \textit{Strong solidity of group factors from lattices in $SO(n,1)$ and $SU(n,1)$}, J. Funct. Anal. \textbf{260} (2011), no. 11, 3209--3221.
\bibitem[SW12]{SW12} O. Sizemore and A. Winchester, \textit{A unique prime decomposition result for wreath product factors},  Pacific J. Math. \textbf{265} (2013), no. 1, 221--232.






\bibitem[Va07]{Va07} S. Vaes, \textit{Explicit computations of all finite index bimodules for a family of II$_1$ factors}, Ann. Sci. \'{E}c. Norm. Sup. \textbf{41} (2008), 743-788.

\bibitem[V10]{V10} S. Vaes, \textit{Rigidity for von Neumann algebras and their invariants}, Proceedings of the International Congress of Mathematicians (Hyderabad, India, 2010), Vol. III, 1624--1650, Hindustan Book Agency, New Delhi, 2010.

\bibitem[Va10]{Va10} S Vaes, \textit{One-cohomology and the uniqueness of the group measure space decomposition of a II$_1$ factor}, Math. Ann. \textbf{355} (2013), 661-696.

\bibitem[Va13]{Va13} S. Vaes, \textit{Normalizers inside amalgamated free products von Neumann algebras}, Publ. Res. Inst. Math. Sci. \textbf{50} (2014), 695--721. 
\bibitem[VV14]{VV14} S. Vaes and P. Verraedt, \textit{Classification of type III Bernoulli crossed products}, Adv. Math. \textbf{281} (2015), 296--332. 






















 \bibitem[V96]{V96}D-V. Voiculescu,\textit{The analogues of entropy and of Fisher's information measure in free probability theory: the absence of Cartan subalgebras}, Geom. Funct. Anal. \textbf{6} (1996), no. 1,172--199.



\end{thebibliography}
\end{document}